\newcommand{\N}{{\mathds{N}}}
\newcommand{\R}{{\mathds{R}}}
\newcommand{\D}{{\mathfrak{D}}}
\newcommand{\A}{{\mathfrak{A}}}
\newcommand{\B}{{\mathfrak{B}}}
\newcommand{\bigslant}[2]{{\raisebox{.2em}{$#1$}\left/\raisebox{-.2em}{$#2$}\right.}}
\newcommand{\Lip}{{\mathsf{L}}}
\newcommand{\dpropinquity}[1]{{\mathsf{\Lambda}^\ast_{#1}}}
\newcommand{\covpropinquity}[1]{{\mathsf{\Lambda}^{\mathrm{cov}}_{#1}}}
\newcommand{\Kantorovich}[1]{{\mathsf{mk}_{#1}}}
\newcommand{\KantorovichDist}[3]{{\mathrm{mkD}_{#1}\left({#2},{#3}\right)}}
\newcommand{\Haus}[1]{{\mathsf{Haus}_{#1}}}
\newcommand{\StateSpace}{{\mathscr{S}}}
\newcommand{\MongeKant}{{Mon\-ge-Kan\-to\-ro\-vich metric}}
\newcommand{\qcms}{quantum compact metric space}
\newcommand{\unit}{1}
\newcommand{\sa}[1]{{\mathfrak{sa}\left({#1}\right)}}
\newcommand{\UIso}[4]{{\mathsf{UIso}_{#1}\left({#2}\rightarrow{#3}\middle\vert{#4}\right)}}
\newcommand{\dom}[1]{{\operatorname*{dom}\left({#1}\right)}}
\newcommand{\norm}[2]{{\left\|{#1}\right\|_{#2}}}
\newcommand{\tunnelset}[4]{{\text{\calligra Tunnels}\,\left[{#1}\stackrel{#3}{\longrightarrow}{#2}\middle\vert {#4} \right]}}
\newcommand{\targetsettunnel}[3]{{\mathfrak{t}_{#1}\left({#2}\middle\vert{#3}\right)}}
\newcommand{\targetsetimage}[3]{{\mathfrak{i}_{#1}\left({#2}\middle\vert{#3}\right)}}
\newcommand{\targetsetforward}[3]{{\mathfrak{f}_{#1}\left({#2}\middle\vert{#3}\right)}}
\newcommand{\worknote}[1]{} 
\newcommand{\opnorm}[3]{{\left|\mkern-1.5mu\left|\mkern-1.5mu\left| {#1} \right|\mkern-1.5mu\right|\mkern-1.5mu\right|_{#3}^{#2}}}
\newcommand{\tunnelreach}[2]{{\rho\left({#1}\middle\vert{#2}\right)}}
\newcommand{\tunnelmagnitude}[2]{{\mu\left({#1}\middle\vert{#2}\right)}}
\newcommand{\tunnelextent}[1]{{\chi\left({#1}\right)}}
\newcommand{\dil}[1]{{\mathrm{dil}\left({#1}\right)}}
\theoremstyle{plain}
\newtheorem{theorem}{Theorem}[section]
\newtheorem{corollary}[theorem]{Corollary}
\newtheorem{lemma}[theorem]{Lemma}
\newtheorem{theorem-definition}[theorem]{Theorem-Definition}
\theoremstyle{definition}
\newtheorem{definition}[theorem]{Definition}
\newtheorem{convention}[theorem]{Convention}
\theoremstyle{remark}
\newtheorem{remark}[theorem]{Remark}
\newtheorem{notation}[theorem]{Notation}
\renewcommand{\geq}{\geqslant}
\renewcommand{\leq}{\leqslant}
\numberwithin{equation}{section}
\title{Convergence of Cauchy Sequences for the Covariant Gromov-Hausdorff Propinquity}
\author{Fr\'{e}d\'{e}ric Latr\'{e}moli\`{e}re}
\ead{frederic@math.du.edu}
\ead[url]{http://www.math.du.edu/\symbol{126}frederic}
\address{Department of Mathematics \\ University of Denver \\ Denver CO 80208}
\begin{document}

\begin{abstract}
  The covariant Gromov-Hausdorff propinquity is a distance on Lipschitz dynamical systems over quantum compact metric spaces, up to equivariant full quantum isometry. It is built from the dual Gromov-Hausdorff propinquity which, as its classical counterpart, is complete. We prove in this paper several sufficient conditions for convergence of Cauchy sequences for the covariant propinquity and apply it to show that many natural classes of dynamical systems are complete for this metric.
\end{abstract}

\maketitle


\section{Introduction}

The covariant Gromov-Hausdorff propinquity is a distance, up to equivariant full quantum isometry, on the class of Lipschitz dynamical systems, defined as the class of {\qcms} endowed with a strongly continuous action of a proper monoid by Lipschitz morphisms or even Lipschitz linear maps. The class of Lipschitz dynamical systems include C*-dynamical systems by Lipschitz automorphisms as well as actions by monoids of completely positive maps which map the domain of the L-seminorm of a {\qcms} to itself. We proved in \cite{Latremoliere18b} that the covariant propinquity is a metric up to equivariant full quantum isometry --- namely, distance zero implies the existence of a full quantum isometry between the {\qcms s} as well as an isometric isomorphism between the acting monoids, which intertwine the actions in a natural fashion. We illustrate in \cite{Latremoliere18b} our metric by showing that fuzzy tori with their dual actions converge to quantum tori with their own dual actions for the covariant propinquity. 

The covariant propinquity is built from the dual Gromov-Hausdorff propinquity \cite{Latremoliere13,Latremoliere13b,Latremoliere13c,Latremoliere14,Latremoliere15,Latremoliere15b}, which actually enjoys some natural covariance properties \cite{Latremoliere17c}, though it is only defined on {\qcms s} and thus does not fully capture the structure of a Lipschitz dynamical system. Nonetheless, our work in \cite{Latremoliere17c} suggests that a covariant propinquity, as introduced in \cite{Latremoliere18b}, is a natural object to construct. The covariant propinquity between two Lipschitz dynamical systems dominate the propinquity between the underlying {\qcms s} and the pointed Gromov-Hausdorff distance \cite{Gromov81} between the underlying proper monoids, and both these last two distances are in particular complete. We are thus left with a very natural question: what classes of Lipschitz dynamical systems are complete when endowed with the covariant propinquity?

This question is the subject of the present paper. As the covariant propinquity is built using a covariant version of the pointed Gromov-Hausdorff distance between proper monoids, we begin with finding natural classes of proper monoids complete for the monoid-adapted Gromov-Hausdorff distance. We then discover that completeness is not a trivial matter, and in fact, we require a form of equicontinuity of the right translations of our monoids to provide a sufficient condition on Cauchy sequences to converge. We also see that additional complications arise when working with proper groups. We are however able to establish a generous sufficient condition which applies to a large class of natural examples, especially arising from actions on {\qcms s}.

We then turn to the matter of convergence for Cauchy sequences for the covariant propinquity. We provide a sufficient condition which includes the condition exhibited for the monoid-Gromov-Hausdorff distance, and a similar condition on the actions themselves. The reason for this double condition is simply that we actually allow for a metric on proper monoids of Lipschitz dynamical systems which may not be the same as the natural pseudo-metric induced by the quantum compact metric space structure of the space on which the monoid acts. We explain this matter toward the end of this paper. We are in fact able to prove a stronger result than completeness for some classes of Lipschitz dynamical systems: we exhibit a sufficient condition for sequential compactness of certain classes of Lipschitz dynamical systems based upon the inherent covariant properties of the dual propinquity. From this result, our completeness result derives. We do know of a direct proof of our completeness result which does not involve the compactness properties we prove in this paper, but our proof of completeness does not weaken the assumptions we make here, and thus this approach is the most potent we know at this moment.

We begin our paper with a background section on noncommutative metric geometry and the covariant propinquity to set up the framework of this paper.

\emph{Acknowledgment: }This work is part of the project supported by the grant H2020-MSCA-RISE-2015-691246-QUANTUM DYNAMICS and grant \#3542/H2020/2016/2 of the Polish Ministry of Science and Higher Education.

\section{The covariant Gromov-Hausdorff Propinquity}

The covariant propinquity is defined on Lipschitz dynamical systems, which are proper monoid actions on {\qcms s} by Lipschitz maps. A {\qcms} is a noncommutative analogue of the algebra of Lipschitz functions over a compact metric space \cite{Connes89,Rieffel98a,Rieffel99,Latremoliere05b,Latremoliere12b,Latremoliere13}.

\begin{notation}
  Throughout this paper, for any unital C*-algebra $\A$, the norm of $\A$ is denoted by $\norm{\cdot}{\A}$, the space of self-adjoint elements in $\A$ is denoted by $\sa{\A}$, the unit of $\A$ is denoted by $\unit_\A$ and the state space of $\A$ is denoted by $\StateSpace(\A)$. We also adopt the convention that if a seminorm $\Lip$ is defined on some dense subspace of $\sa{\A}$ and $a\in\sa{\A}$ is not in the domain of $\Lip$, then $\Lip(a) = \infty$.
\end{notation}

\begin{definition}\label{qcms-def}
  A \emph{\qcms} $(\A,\Lip)$ is an ordered pair of a unital C*-algebra $\A$ and a seminorm $\Lip$, called an \emph{L-seminorm}, defined on a dense Jordan-Lie subalgebra $\dom{\Lip}$ of $\sa{\A}$, such that:
  \begin{enumerate}
    \item $\{ a \in \sa{\A} : \Lip(a) = 0 \} = \R\unit_\A$,
    \item the \emph{\MongeKant} $\Kantorovich{\Lip}$ defined for any two states $\varphi, \psi \in \StateSpace(\A)$ by:
      \begin{equation*}
        \Kantorovich{\Lip}(\varphi, \psi) = \sup\left\{ |\varphi(a) - \psi(a)| : a\in \dom{\Lip}, \Lip(a) \leq 1 \right\}
      \end{equation*}
      metrizes the weak* topology restricted to $\StateSpace(\A)$,
    \item $\Lip$ satisfies the $F$-quasi-Leibniz inequality, i.e. for all $a,b \in \dom{\Lip}$:
      \begin{equation*}
        \max\left\{ \Lip\left(\frac{a b + b a}{2}\right), \Lip\left(\frac{a b - b a}{2i}\right) \right\} \leq F(\norm{a}{\A},\norm{b}{\A},\Lip(a),\Lip(b))\text{,}
      \end{equation*}
      for some \emph{permissible} function $F$, i.e. a function $F : [0,\infty)^4\rightarrow [0,\infty)$, increasing when $[0,\infty)^4$ is endowed with the product order, and such that for all $x,y,l_x,l_y \geq 0$ we have $F(x,y,l_x,l_y) \geq x l_y + y l_x$,
    \item $\Lip$ is lower semi-continuous with respect to $\norm{\cdot}{\A}$.
  \end{enumerate}

We say that $(\A,\Lip)$ is \emph{Leibniz} when $F$ can be chosen to be $F:x,y,l_x,l_y \mapsto x l_y + y l_x$. More generally, if $\Lip$ satisfies the $F$-quasi-Leibniz inequality for some $F$ then $(\A,\Lip)$ is called a {$F$-\qcms}.
\end{definition}

We refer to \cite{Rieffel98a, Rieffel01, Rieffel02, li03, Latremoliere15c, Latremoliere16, Latremoliere15b,Latremoliere15d,Rieffel15b,Aguilar18} for various examples of {\qcms s}, including quantum tori, certain group C*-algebras, AF algebras, noncommutative solenoids, Podles spheres, and more. 
 
Quantum compact metric spaces form a category for the appropriate choices of morphisms. We refer to \cite{Latremoliere16b} for some observations on the definition of Lipschitz morphisms and some of their applications. The definition of quantum isometry relies on a key observation of Rieffel in \cite{Rieffel00}.

\begin{definition}
  Let $(\A,\Lip_\A)$ and  $(\B,\Lip_\B)$ be {\qcms s}.
  \begin{itemize}
  \item A positive unital linear map $\pi : \A \rightarrow \B$ is \emph{Lipschitz} when there exists $k\geq 0$ such that $\Lip_\B\circ\pi\leq k \Lip_\A$.
  \item A \emph{Lipschitz morphism} $\pi : \A \rightarrow \B$ is a unital *-morphism from $\A$ to $\B$ when there exists $k\geq 0$ such that $\Lip_\B\circ\pi\leq k \Lip_\A$.
  \item A \emph{quantum isometry} $\pi : (\A,\Lip_\A) \rightarrow (\B,\Lip_\B)$ is a *-epimorphism from $\A$ onto $\B$ such that for all $b \in \dom{\Lip_\A}$:
    \begin{equation*}
      \Lip_\B(b) = \inf\left\{ \Lip_\A(a) : \pi(a) = b \right\} \text{.}
    \end{equation*}
  \item A \emph{full quantum isometry} $\pi : (\A,\Lip_\A) \rightarrow (\B,\Lip_\B)$ is a *-isomorphism from $\A$ onto $\B$ such that $\Lip_\B\circ\pi = \Lip_\A$.
  \end{itemize}
\end{definition}

We now equip {\qcms s} with actions of proper monoids. As a matter of definition, we recall:
\begin{definition}
  A \emph{metric monoid} $(G,\delta)$ (resp. group) is a monoid (resp. a group) $G$ and a left invariant metric $\delta$ on $G$ for which the multiplication is continuous (resp. the multiplication and the inverse function are continuous).

The metric monoid (resp. group) is \emph{proper} when all its closed balls are compact.
\end{definition}

\begin{definition}
  A \emph{(metric monoid) morphism} $\pi : G \rightarrow H$ is a map such that:
  \begin{itemize}
    \item $\pi$ maps the identity element of $G$ to the identity element of $H$,
    \item $\forall g,h \in G \quad \pi(g h) = \pi(g) \pi(h)$,
    \item $\pi$ is continuous.
  \end{itemize}
\end{definition}

\begin{remark}
  A proper metric space is always complete and separable.
\end{remark}

We now formally define the objects of the space under consideration in this paper: Lipschitz dynamical systems.

\begin{notation}
  Let $(\A,\Lip_\A)$ and $(\B,\Lip_\B)$ be two {\qcms s}. If $\pi : \A \rightarrow \B$ is a unital positive linear map, then:
  \begin{equation*}
    \dil{\pi} = \inf\left\{ k > 0 : \forall a \in \sa{\A} \quad \Lip_\B\circ\pi(a) \leq k \Lip_\A(a) \right\} \text{.}
  \end{equation*}
  By definition, $\dil{\pi} < \infty$ if and only if $\pi$ is a Lipschitz linear map.
\end{notation}

\begin{definition}[{\cite{Latremoliere18b}}]
  Let $F$ be a permissible function. A \emph{Lipschitz dynamical $F$-system} $(\A,\Lip,G,\delta,\alpha)$ is a {$F$-\qcms} $(\A,\Lip)$ and a proper monoid $(G,\delta)$, together with an action $\alpha$ by positive unital maps (i.e. a morphism from $G$ to the monoid of positive linear maps) such that:
\begin{enumerate}
  \item $\alpha$ is strongly continuous: for all $a\in\A$ and $g \in G$, we have:
    \begin{equation*}
      \lim_{h\rightarrow g} \norm{\alpha^h(a) - \alpha^g(a)}{\A} = 0\text{,}
    \end{equation*}
  \item $g\in G\mapsto \dil{\alpha^g}$ is locally bounded: for all $g\in G$ there exist $D > 0$ and a neighborhood $U$ of $g$ in $G$ such that if $h\in U$ then $\dil{\alpha^h} \leq D$.
\end{enumerate}

A \emph{Lipschitz $C^\ast$-dynamical $F$-system} $(\A,\Lip,G,\delta,\alpha)$ is a Lipschitz dynamical system where $G$ is a proper group and $\alpha^g$ is a Lipschitz unital *-automorphism for all $g \in G$.
\end{definition}

The class of Lipschitz dynamical systems include various sub-classes of interest, from group actions by full quantum isometries, to actions by completely positive maps, to actions by Lipschitz automorphisms or even unital endomorphisms.

There is a natural manner to combine the notions of Lipschitz morphisms and Lipschitz morphism of proper monoids into a notion of morphism for Lipschitz dynamical systems. For our purpose, we will focus on what it means for two such systems to be considered the same system, i.e. our notion of isomorphism.

\begin{definition}[{\cite{Latremoliere18b}}]
  Let $\mathds{A} = (\A,\Lip_\A,G,\delta_G,\alpha)$ and $\mathds{B} = (\B,\Lip_\B,H,\delta_H,\beta)$ be two Lipschitz dynamical systems. An \emph{equivariant quantum full isometry} $(\pi,\varsigma) : \mathds{A} \rightarrow \mathds{B}$ is given by a full quantum isometry $\pi : (\A,\Lip_\A) \rightarrow (\B,\Lip_\B)$ and a monoid isometric isomorphism $\varsigma : G\rightarrow H$ such that for all $g \in G$:
\begin{equation*}
  \pi\circ\alpha^g = \beta^{\varsigma(g)}\circ\pi \text{.}
\end{equation*}
\end{definition}

The construction of the covariant propinquity begins with the definition of a monoid-adapted Gromov-Hausdorff distance. We define our distance between two proper metric monoids $(G_1,\delta_1)$ and $(G_2,\delta_2)$ by measuring how far a given pair of maps $\varsigma_1:G_1\rightarrow G_2$ and $\varsigma_2 : G_2\rightarrow G_1$ is from being an isometric isomorphism and its inverse. There are at least two ways to do so. The following definition will serve this purpose well for us.

\begin{notation}
  For a metric space $(X,\delta)$, $x\in X$ and $r\geq 0$, the closed ball in $(X,\delta)$ centered at $x$, of radius $r$, is denoted as $X_\delta[x,r]$, or simply $X[x,r]$.
  If $(G,\delta)$ is a metric monoid with identity element $e \in G$, and if $r \geq 0$, then $G[e,r]$ is denoted as $G[r]$. 
\end{notation}

\begin{definition}[{\cite{Latremoliere18b}}]\label{almost-iso-def}
Let $(G_1,\delta_1)$ and $(G_2,\delta_2)$ be two metric monoids with respective identity elements $e_1$ and $e_2$. An \emph{$r$-local $\varepsilon$-almost isometric isomorphism} $(\varsigma_1,\varsigma_2)$, for $\varepsilon \geq 0$ and $r \geq 0$,  is an ordered pair of maps $\varsigma_1 : G_1 \rightarrow G_2$ and $\varsigma_2 : G_2 \rightarrow G_1$ such that for all $\{j,k\} = \{1,2\}$:
\begin{equation*}
\forall g,g' \in G_j[r] \quad \forall h \in G_k[r] \quad \left| \delta_k(\varsigma_j(g)\varsigma_j(g'),h) - \delta_j(gg',\varsigma_k(h))\right| \leq \varepsilon\text{,}
\end{equation*}
and
\begin{equation*}
\varsigma_j(e_j) = e_k \text{.}
\end{equation*}
 The set of all $r$-local $\varepsilon$-almost isometric isomorphism is denoted by:
\begin{equation*}
  \UIso{\varepsilon}{(G_1,\delta_1)}{(G_2,\delta_2)}{r} \text{.}
\end{equation*}
\end{definition}

\begin{convention}
  Write $f_{\big|D}$ for the restriction of a function $f$ to some subset $D$ of its domain. Let $\varsigma : D_1 \subseteq G_1 \rightarrow G_2$ and $\varkappa : D_2 \subseteq G_2 \rightarrow G_1$ with $G_j[r] \subseteq D_j$ for some $r \geq 0$ and $j \in \{1,2\}$. For any $\varepsilon \geq 0$, we will simply write $(\varsigma,\varkappa) \in \UIso{\varepsilon}{(G_1,\delta_1)}{(G_2,\delta_2)}{r}$ to mean:
  \begin{equation*}
    \left(\varsigma_{\big|G_1[r]},\varkappa_{\big| G_2[r]}\right) \in \UIso{\varepsilon}{(G_1,\delta_1)}{(G_2,\delta_2)}{r} \text{.}
  \end{equation*}

Moreover, if $(\varsigma,\varkappa) \in \UIso{\varepsilon}{(G_1,\delta_1)}{(G_2,\delta_2)}{r}$ then we may as well assume that $\varsigma$ and $\varkappa$ are defined on $G_1$ and $G_2$, respectively, by choosing any extension of $\varsigma$ and $\varkappa$, since it does not affect the local almost isometry property.
\end{convention}
We record that almost isometries behave well under composition.

\begin{lemma}[{\cite{Latremoliere18b}}]\label{uiso-composition-lemma}
Let $(G_1,\delta_1)$, $(G_2,\delta_2)$ and $(G_3,\delta_3)$ be three metric monoids with respective identity elements $e_1$, $e_2$ and $e_3$. 

If:
\begin{align*}
  (\varsigma_1,\varkappa_1)\in\UIso{\varepsilon_1}{(G_1,\delta_1)}{(G_2,\delta_2)}{\frac{1}{\varepsilon_1}}
\intertext{ and } 
(\varsigma_2,\varkappa_2)\in\UIso{\varepsilon_2}{(G_2,\delta_2)}{(G_3,\delta_3)}{\frac{1}{\varepsilon_2}}
\end{align*}
for some $\varepsilon_1, \varepsilon_2 \in \left( 0, \frac{\sqrt{2}}{2} \right]$, then:
\begin{equation*}
\left(\varsigma_2\circ\varsigma_1,\varkappa_1\circ\varkappa_2\right) \in \UIso{\varepsilon_1 + \varepsilon_2}{(G_1,\delta_1)}{(G_3,\delta_3)}{\frac{1}{\varepsilon_1 + \varepsilon_2}} \text{.}
\end{equation*}
\end{lemma}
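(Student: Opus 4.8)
The plan is to verify directly the two requirements of Definition~\ref{almost-iso-def} for the composed pair $(\varsigma_2\circ\varsigma_1,\varkappa_1\circ\varkappa_2)$ at the scale $r=\frac{1}{\varepsilon_1+\varepsilon_2}$, using the two given almost isometries as a pair of bridges that are chained together by the triangle inequality. The normalization condition is immediate: since each of $\varsigma_1,\varsigma_2,\varkappa_1,\varkappa_2$ sends identity to identity, we get $(\varsigma_2\circ\varsigma_1)(e_1)=e_3$ and $(\varkappa_1\circ\varkappa_2)(e_3)=e_1$. The substance is therefore the distance-comparison inequality, for which I would first note the harmless fact that $r\leq\frac1{\varepsilon_1}$ and $r\leq\frac1{\varepsilon_2}$, so that every ball $G_j[r]$ sits inside the range where the relevant bridge is valid.

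The crucial preliminary step --- and the one that forces the hypothesis $\varepsilon_1,\varepsilon_2\in\left(0,\frac{\sqrt2}{2}\right]$ --- is to check that the \emph{inner} map of each composite carries the ball of radius $r$ into the ball on which the \emph{outer} bridge is valid. Concretely, I would show that $\varsigma_1$ maps $G_1[r]$ into $G_2\left[\frac1{\varepsilon_2}\right]$ and that $\varkappa_2$ maps $G_3[r]$ into $G_2\left[\frac1{\varepsilon_1}\right]$. To obtain the first, specialize the defining inequality of $(\varsigma_1,\varkappa_1)$ with $g'=e_1$ and $h=e_2$ (both admissible since $r\leq\frac1{\varepsilon_1}$), which after using the normalization yields $\left|\delta_2(\varsigma_1(g),e_2)-\delta_1(g,e_1)\right|\leq\varepsilon_1$, hence $\delta_2(\varsigma_1(g),e_2)\leq r+\varepsilon_1$ for $g\in G_1[r]$. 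The containment then reduces to the arithmetic inequality $\frac1{\varepsilon_1+\varepsilon_2}+\varepsilon_1\leq\frac1{\varepsilon_2}$, equivalently $\varepsilon_2(\varepsilon_1+\varepsilon_2)\leq1$, which holds precisely because $\varepsilon_2\leq\frac1{\sqrt2}$ and $\varepsilon_1+\varepsilon_2\leq\sqrt2$. The second containment is symmetric, using $(\varsigma_2,\varkappa_2)$ and the twin inequality $\varepsilon_1(\varepsilon_1+\varepsilon_2)\leq1$.

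With these containments in hand, the main estimate is a two-step chaining. Fixing $g,g'\in G_1[r]$ and $h\in G_3[r]$, the inclusions $\varsigma_1(g),\varsigma_1(g')\in G_2\left[\frac1{\varepsilon_2}\right]$ and $h\in G_3\left[\frac1{\varepsilon_2}\right]$ let the bridge $(\varsigma_2,\varkappa_2)$ give $\left|\delta_3\bigl(\varsigma_2(\varsigma_1(g))\varsigma_2(\varsigma_1(g')),h\bigr)-\delta_2\bigl(\varsigma_1(g)\varsigma_1(g'),\varkappa_2(h)\bigr)\right|\leq\varepsilon_2$, while the inclusions $g,g'\in G_1\left[\frac1{\varepsilon_1}\right]$ and $\varkappa_2(h)\in G_2\left[\frac1{\varepsilon_1}\right]$ let the bridge $(\varsigma_1,\varkappa_1)$ give $\left|\delta_2\bigl(\varsigma_1(g)\varsigma_1(g'),\varkappa_2(h)\bigr)-\delta_1\bigl(gg',\varkappa_1(\varkappa_2(h))\bigr)\right|\leq\varepsilon_1$. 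Adding these through the common middle term yields the bound $\varepsilon_1+\varepsilon_2$ for the orientation $\{j,k\}=\{1,3\}$. The orientation $\{j,k\}=\{3,1\}$ is entirely symmetric: one inserts the intermediate distance $\delta_2\bigl(\varkappa_2(g)\varkappa_2(g'),\varsigma_1(h)\bigr)$ and applies the same two bridges in their reverse orientation, relying on the containments $\varkappa_2(G_3[r])\subseteq G_2\left[\frac1{\varepsilon_1}\right]$ and $\varsigma_1(G_1[r])\subseteq G_2\left[\frac1{\varepsilon_2}\right]$ already established.

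The main obstacle is not the chaining, which is a routine triangle-inequality argument, but the ball-containment step: the defining inequality of an $r$-local almost isometry only controls products and distances for arguments lying inside balls of a fixed radius, so before the two bridges can be composed one must certify that the image of the inner map stays within the valid range of the outer map. This is exactly where the quantitative hypothesis $\varepsilon_1,\varepsilon_2\leq\frac{\sqrt2}{2}$ enters, through the inequalities $\varepsilon_i(\varepsilon_1+\varepsilon_2)\leq1$; without such a bound the composite would only be an almost isometry at a strictly smaller scale, and the clean additivity of both the error $\varepsilon_1+\varepsilon_2$ and the reciprocal scale $\frac1{\varepsilon_1+\varepsilon_2}$ would be lost.
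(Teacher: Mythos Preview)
Your argument is correct. The paper does not actually prove this lemma here; it is merely quoted from \cite{Latremoliere18b}, so there is no in-paper proof to compare against. Your direct verification---first establishing the ball containments $\varsigma_1(G_1[r])\subseteq G_2\left[\frac{1}{\varepsilon_2}\right]$ and $\varkappa_2(G_3[r])\subseteq G_2\left[\frac{1}{\varepsilon_1}\right]$ via the arithmetic inequalities $\varepsilon_i(\varepsilon_1+\varepsilon_2)\leq 1$, and then chaining the two defining inequalities through the common intermediate term in $G_2$---is exactly the natural proof, and your identification of the ball-containment step as the place where the hypothesis $\varepsilon_1,\varepsilon_2\leq\frac{\sqrt{2}}{2}$ is used is spot on.
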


Our covariant Gromov-Hausdorff distance over the class of proper metric monoids is then defined along the lines Gromov's distance.

\begin{definition}[{\cite{Latremoliere18b}}]\label{group-GH-def}
The \emph{Gromov-Hausdorff monoid distance} $\Upsilon((G_1,\delta_1),(G_2,\delta_2))$ between two proper metric monoids $(G_1,\delta_1)$ and $(G_2,\delta_2)$ is given by:
  \begin{equation*}
    \Upsilon((G_1,\delta_1),(G_2,\delta_2)) = \min\left\{ \frac{\sqrt{2}}{2}, \inf\left\{ \varepsilon > 0 \middle\vert \UIso{\varepsilon}{(G_1,\delta_1)}{(G_2,\delta_2)}{\frac{1}{\varepsilon}} \not= \emptyset \right\}\right\} \text{.}
  \end{equation*}
\end{definition}

We indeed prove in \cite{Latremoliere18b}.

\begin{theorem}[{\cite{Latremoliere18b}}]\label{upsilon-metric-thm}
For any proper metric monoids $(G_1,\delta_1)$, $(G_2,\delta_2)$ and $(G_3,\delta_3)$:
\begin{enumerate}
\item $\Upsilon((G_1,\delta_1),(G_2,\delta_2)) \leq \frac{\sqrt{2}}{2}$,
\item $\Upsilon((G_1,\delta_1),(G_2,\delta_2)) = \Upsilon((G_2,\delta_2),(G_1,\delta_1))$,
\item $\Upsilon((G_1,\delta_1),(G_3,\delta_3)) \leq \Upsilon((G_1,\delta_1),(G_2,\delta_2)) + \Upsilon((G_2,\delta_2),(G_3,\delta_3))$,
\item If $\Upsilon((G_1,\delta_1),(G_2,\delta_2)) = 0$ if and only if there exists a monoid isometric isomorphism from $(G_1,\delta_1)$ to $(G_2,\delta_2)$.
\end{enumerate}
In particular, $\Upsilon$ is a metric up to metric group isometric isomorphism on the class of proper metric groups.

Moreover, if $\mathrm{GH}$ is the pointed Gromov-Hausdorff distance on proper metric spaces, and if $e_1$ and $e_2$ are the respective identity elements of $G_1$ and $G_2$, then:
\begin{equation*}
  \mathrm{GH}((G_1,\delta_1,e_1),(G_2,\delta_2,e_2)) \leq \Upsilon((G_1,\delta_1),(G_2,\delta_2)) \text{.}
\end{equation*}
\end{theorem}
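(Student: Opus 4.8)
\noindent The plan is to establish the four metric properties in turn and then deduce the domination by the pointed distance, with the implication ``zero distance forces an isometric isomorphism'' as the only genuinely delicate point. Properties (1) and (2) are formal: the bound by $\frac{\sqrt{2}}{2}$ is built into the definition as a minimum, and symmetry holds because the defining condition of $\UIso{\varepsilon}{(G_1,\delta_1)}{(G_2,\delta_2)}{r}$ is invariant under exchanging the roles $\{j,k\}=\{1,2\}$, so that $(\varsigma_1,\varsigma_2)$ lies in it exactly when $(\varsigma_2,\varsigma_1)$ lies in $\UIso{\varepsilon}{(G_2,\delta_2)}{(G_1,\delta_1)}{r}$, forcing the two infima to agree. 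For the triangle inequality (3) I would first dispose of the trivial case where the two summands add up to at least $\frac{\sqrt{2}}{2}$ using (1); otherwise both lie strictly below $\frac{\sqrt{2}}{2}$, and for small $\eta>0$ I would pick $\varepsilon_1,\varepsilon_2\in\left(0,\frac{\sqrt{2}}{2}\right]$ slightly exceeding the respective distances with the corresponding $\UIso$ sets nonempty --- here using that $\UIso{\varepsilon}{\cdot}{\cdot}{1/\varepsilon}$ grows with $\varepsilon$, since increasing $\varepsilon$ both relaxes the inequality and shrinks the radius $1/\varepsilon$. The composition Lemma \ref{uiso-composition-lemma} then yields a member of $\UIso{\varepsilon_1+\varepsilon_2}{(G_1,\delta_1)}{(G_3,\delta_3)}{1/(\varepsilon_1+\varepsilon_2)}$, so $\Upsilon((G_1,\delta_1),(G_3,\delta_3))\leq\varepsilon_1+\varepsilon_2$, and letting $\eta\to 0$ finishes the step.

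\noindent The core is (4). If an isometric isomorphism $\varsigma$ exists, then $\varsigma$ and $\varsigma^{-1}$ sit in every $\UIso{\varepsilon}{\cdot}{\cdot}{r}$, forcing $\Upsilon=0$. Conversely, assume $\Upsilon=0$ and for each $n$ choose $(\varsigma_1^{(n)},\varsigma_2^{(n)})\in\UIso{1/n}{(G_1,\delta_1)}{(G_2,\delta_2)}{n}$. Setting $g'=e_j$ in the defining inequality gives the reduced estimate $\left|\delta_k(\varsigma_j^{(n)}(g),h)-\delta_j(g,\varsigma_k^{(n)}(h))\right|\leq 1/n$ on $G_j[n]\times G_k[n]$. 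Taking $h=e_k$ and using $\varsigma_k^{(n)}(e_k)=e_j$ shows $\varsigma_j^{(n)}$ sends $G_j[R]$ into $G_k[R+1]$, so the images land in fixed compact balls by properness; and the chain $\delta_k(\varsigma_j^{(n)}(g),\varsigma_j^{(n)}(g'))\leq\delta_j(g,\varsigma_k^{(n)}\varsigma_j^{(n)}(g'))+1/n\leq\delta_j(g,g')+2/n$ --- the last step using that $\varsigma_k^{(n)}\varsigma_j^{(n)}(g')$ is within $1/n$ of $g'$ --- makes the families asymptotically $1$-Lipschitz, hence equicontinuous.

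\noindent A diagonal Arzel\`{a}-Ascoli argument over an exhaustion by balls then extracts a single subsequence along which both $\varsigma_1^{(n)}$ and $\varsigma_2^{(n)}$ converge uniformly on bounded sets to maps $\varsigma_1,\varsigma_2$. Since any fixed triple $(g,g',h)$ eventually lies in the relevant balls and the error tends to $0$, the limit satisfies the exact identities $\delta_2(\varsigma_1(g)\varsigma_1(g'),h)=\delta_1(gg',\varsigma_2(h))$ and its mirror for all $g,g',h$. Substituting $g=\varsigma_2(h)$ (and symmetrically) yields $\varsigma_1\circ\varsigma_2=\mathrm{id}$ and $\varsigma_2\circ\varsigma_1=\mathrm{id}$; substituting $h=\varsigma_1(gg')$ and using $\varsigma_2\circ\varsigma_1=\mathrm{id}$ yields $\varsigma_1(g)\varsigma_1(g')=\varsigma_1(gg')$; and the reduced identity with $h=\varsigma_1(g')$ yields $\delta_2(\varsigma_1(g),\varsigma_1(g'))=\delta_1(g,g')$. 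Thus $\varsigma_1$ is the sought isometric isomorphism. I expect the bookkeeping of ranges and equicontinuity, together with the care needed to extract one subsequence serving both sequences, to be the principal obstacle.

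\noindent The final inequality is read off from the same reduced estimate. At any level $\varepsilon>\Upsilon((G_1,\delta_1),(G_2,\delta_2))$ the pair $(\varsigma_1,\varsigma_2)$ is a basepoint-preserving ($\varsigma_1(e_1)=e_2$, $\varsigma_2(e_2)=e_1$) pair of approximately adjoint maps satisfying $\left|\delta_2(\varsigma_1(g),h)-\delta_1(g,\varsigma_2(h))\right|\leq\varepsilon$ on the balls of radius $1/\varepsilon$ --- precisely the data of an $\varepsilon$-admissible coupling of the pointed spaces $(G_1,\delta_1,e_1)$ and $(G_2,\delta_2,e_2)$, with the growing radius $1/\varepsilon$ supplying the larger and larger balls that the pointed Gromov-Hausdorff distance compares. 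This witnesses $\mathrm{GH}((G_1,\delta_1,e_1),(G_2,\delta_2,e_2))\leq\varepsilon$ for every such $\varepsilon$, and the stated bound follows by taking the infimum.
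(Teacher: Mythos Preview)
The paper does not prove this theorem here; it is quoted from \cite{Latremoliere18b} and stated without argument, so there is no in-paper proof to compare against. Your outline is sound and follows the expected route: (1) and (2) are formal, (3) follows from the composition Lemma~\ref{uiso-composition-lemma} after disposing of the trivial case, and the hard direction of (4) proceeds by extracting a limit from a sequence of $\frac{1}{n}$-almost isometric isomorphisms, with the algebraic conclusions read off from the limiting exact identity exactly as you describe.

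One step deserves more care. Your assertion that the bound $\delta_k\bigl(\varsigma_j^{(n)}(g),\varsigma_j^{(n)}(g')\bigr)\leq\delta_j(g,g')+\frac{2}{n}$ renders the family ``asymptotically $1$-Lipschitz, hence equicontinuous'' is not quite right: the additive $\frac{2}{n}$ means each individual $\varsigma_j^{(n)}$ need not be continuous at all, so the classical Arzel\`a--Ascoli theorem does not apply directly. The standard repair is to run the diagonal extraction on a countable dense subset of each $G_j$ (proper metric spaces are separable): the images of each fixed dense-set point lie eventually in a fixed compact ball, so a diagonal subsequence yields pointwise convergence on the dense set, and the limit there is a genuine isometry by the $\frac{2}{n}\to 0$ estimate, hence extends uniquely and continuously to all of $G_j$. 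The exact identity $\delta_2(\varsigma_1(g)\varsigma_1(g'),h)=\delta_1(gg',\varsigma_2(h))$ then holds everywhere by density and joint continuity of the multiplication and of the metric. With this modification your deductions of bijectivity, multiplicativity, and the isometry property from the limit identity are correct, and the final comparison with the pointed Gromov--Hausdorff distance follows as you indicate from the reduced estimate.
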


It will also be helpful to recall from \cite{Latremoliere18b} the following properties of almost isometries. 

\begin{lemma}[{\cite{Latremoliere18b}}]\label{almost-isoiso-lemma}
Let $(G_1,\delta_1)$, $(G_2,\delta_2)$ be two metric monoids and $\varepsilon \geq 0$, $r > 0$. If $(\varsigma_1,\varsigma_2) \in \UIso{\varepsilon}{(G_1,\delta_1)}{(G_2,\delta_2)}{r}$ then for all $\{j,k\} = \{1,2\}$, if $r'=\max\{0,r-\varepsilon\}$ then:
\begin{enumerate}
\item $\forall g \in G_j[r] \quad \forall h \in G_k[r] \quad \left| \delta_k(\varsigma_j(g),h) - \delta_j(g,\varsigma_k(h)) \right| \leq \varepsilon$,
\item $\forall t \in [0,r] \quad \forall g \in G_j[t] \quad \varsigma_j(g) \in G_k[t + \varepsilon]$,
\item $\forall g \in G_j[r'] \quad \delta_j(\varsigma_k\circ\varsigma_j(g),g) \leq \varepsilon$,
\item $\forall g,g' \in G_j\left[\frac{r'}{2}\right] \quad \delta_k(\varsigma_j(g)\varsigma_j(g'),\varsigma_j(gg')) \leq 2\varepsilon$,
\item $\forall g,g' \in G_j[r'] \quad \left|\delta_k(\varsigma_j(g),\varsigma_j(g')) - \delta_j(g,g')\right| \leq 2\varepsilon$;
\end{enumerate}
\end{lemma}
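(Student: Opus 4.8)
The plan is to derive all five assertions from the single defining inequality of Definition~\ref{almost-iso-def} together with the normalization $\varsigma_j(e_j)=e_k$. Recall that for $(\varsigma_1,\varsigma_2)\in\UIso{\varepsilon}{(G_1,\delta_1)}{(G_2,\delta_2)}{r}$ and any $\{j,k\}=\{1,2\}$ we have $\left|\delta_k(\varsigma_j(g)\varsigma_j(g'),h)-\delta_j(gg',\varsigma_k(h))\right|\le\varepsilon$ whenever $g,g'\in G_j[r]$ and $h\in G_k[r]$. The whole proof consists in feeding well-chosen elements into this inequality and into assertion (1), exploiting the normalization to collapse products to single factors, and then chaining the resulting estimates by the triangle inequality.

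First I would establish (1) by specializing the defining inequality at $g'=e_j$, which is legitimate because $r>0$ gives $e_j\in G_j[r]$. Since $\varsigma_j(e_j)=e_k$ is the identity of $G_k$ and $ge_j=g$, both products collapse and (1) drops out verbatim. Assertion (2) is then immediate from (1) with $h=e_k\in G_k[r]$: using $\varsigma_k(e_k)=e_j$ and $g\in G_j[t]$ yields $\delta_k(\varsigma_j(g),e_k)\le\delta_j(g,e_j)+\varepsilon\le t+\varepsilon$, that is, $\varsigma_j(g)\in G_k[t+\varepsilon]$. For (3) I would apply (1) with $h=\varsigma_j(g)$; the term $\delta_k(\varsigma_j(g),\varsigma_j(g))$ vanishes, leaving $\delta_j(g,\varsigma_k\varsigma_j(g))\le\varepsilon$. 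The point requiring care is the admissibility of this choice of $h$: for $g\in G_j[r']$, assertion (2) places $\varsigma_j(g)$ in $G_k[r'+\varepsilon]$, and since $r'+\varepsilon=\max\{r,\varepsilon\}$, this ball is contained in $G_k[r]$ exactly when $r\ge\varepsilon$; when $r<\varepsilon$ we have $r'=0$, so $g=e_j$ and (3) holds trivially by normalization.

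Assertion (4) is the place where the radius must be halved, and is where I expect the \textbf{main obstacle} to lie, since it is the only part using the full product form of the defining inequality and thus the only part that needs the image of a product to remain inside an admissible ball. For $g,g'\in G_j\left[\frac{r'}{2}\right]$, left invariance of $\delta_j$ gives $\delta_j(gg',g)=\delta_j(g',e_j)\le\frac{r'}{2}$, hence $\delta_j(gg',e_j)\le r'$, so $gg'\in G_j[r']$ and, by (2), $\varsigma_j(gg')\in G_k[r'+\varepsilon]\subseteq G_k[r]$. Plugging $h=\varsigma_j(gg')$ into the defining inequality and invoking (3) at the point $gg'\in G_j[r']$ gives $\delta_k(\varsigma_j(g)\varsigma_j(g'),\varsigma_j(gg'))\le\delta_j(gg',\varsigma_k\varsigma_j(gg'))+\varepsilon\le 2\varepsilon$.

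Finally, (5) follows by applying (1) with $h=\varsigma_j(g')$ (admissible by (2), as in (3)), which gives $\left|\delta_k(\varsigma_j(g),\varsigma_j(g'))-\delta_j(g,\varsigma_k\varsigma_j(g'))\right|\le\varepsilon$, and then correcting $\varsigma_k\varsigma_j(g')$ back to $g'$ using (3) and the triangle inequality, $\left|\delta_j(g,\varsigma_k\varsigma_j(g'))-\delta_j(g,g')\right|\le\delta_j(\varsigma_k\varsigma_j(g'),g')\le\varepsilon$; summing the two estimates yields the bound $2\varepsilon$. Beyond the radius bookkeeping in (3)--(5)---keeping every argument inside the ball of radius $r$ on which the hypotheses are valid, which is precisely what forces the appearance of $r'=\max\{0,r-\varepsilon\}$ and the halved radius in (4)---and the single use of left invariance, no step requires more than the triangle inequality, so the argument is essentially a bookkeeping exercise organized so that each assertion feeds the next.
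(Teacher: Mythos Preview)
Your proof is correct. The paper itself does not supply a proof of this lemma but merely cites \cite{Latremoliere18b}; your derivation---specializing the defining inequality at identity elements to get (1) and (2), bootstrapping (3) from (1) via $h=\varsigma_j(g)$, and then combining the defining inequality with (3) for (4) and (5)---is the natural argument and handles the radius bookkeeping (including the degenerate case $r<\varepsilon$) carefully.
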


The construction of the covariant propinquity begins with generalizing the notion of a tunnel between {\qcms s}, as defined in \cite{Latremoliere13b,Latremoliere14} for our construction of the Gromov-Hausdorff propinquity, to our class of Lipschitz dynamical systems. Notably, the needed changes are minimal.

\begin{definition}[{\cite{Latremoliere18b}}]\label{equi-tunnel-def}
Let $\varepsilon > 0$ and $F$ be a permissible function. Let $(\A_1,\Lip_1,G_1,\delta_1,\alpha_1)$ and $(\A_2,\Lip_2,G_2,\delta_2,\alpha_2)$ be two Lipschitz dynamical $F$-systems. Let $e_1$ and $e_2$ be the identity elements of $G_1$ and $G_2$ respectively. A \emph{$\varepsilon$-covariant $F$-tunnel}:
\begin{equation*}
\tau = (\D,\Lip_\D,\pi_1,\pi_2,\varsigma_1,\varsigma_2)
\end{equation*}
from $(\A_1,\Lip_1,G_1,\delta_1,\alpha_1)$ to $(\A_2,\Lip_2,G_2,\delta_2,\alpha_2)$ is given by 
\begin{equation*}
  (\varsigma_1,\varsigma_2) \in \UIso{\varepsilon}{(G_1,\delta_1)}{(G_2,\delta_2)}{\frac{1}{\varepsilon}} \text{,}
\end{equation*}
an $F$-{\qcms} $(\D,\Lip_\D)$, and two quantum isometries $\pi_1 : (\D,\Lip_\D) \twoheadrightarrow (\A_1,\Lip_1)$ and $\pi_2 : (\D,\Lip_\D) \twoheadrightarrow (\A_2,\Lip_2)$.
\end{definition}

\begin{remark}
  If $\tau$ is an $\varepsilon$-covariant tunnel then it is also an $\eta$-covariant tunnel for any $\eta \geq \varepsilon$.
\end{remark}

\begin{remark}
  If $(\D,\Lip,\pi,\rho,\varsigma,\varkappa)$ is a covariant tunnel from $(\A,\Lip_\A,G,\delta_G,\alpha)$ to $(\B,\allowbreak \Lip_\B,H,\delta_H,\beta)$, then $(\D,\Lip,\pi,\rho)$ is a tunnel from $(\A,\Lip_\A)$ to $(\B,\Lip_\B)$ in the sense of \cite{Latremoliere13b}. We also note that covariant tunnels are not constructed using a Lipschitz dynamical systems. They only involve an almost isometric isomorphism.
\end{remark}

The covariant propinquity is defined from certain quantities associated with covariant tunnels. These quantities do not depend on the quasi-Leibniz inequality. We now give their definitions, as they will be helpful with our current work.

\begin{notation}
  Let $\pi : \A \rightarrow \B$ be a positive unital linear map between two unital C*-algebras $\A$ and $\B$. We denote the dual map $\varphi \in \StateSpace(\B) \mapsto \varphi\circ\pi \in \StateSpace(\A)$ by $\pi^\ast$.
\end{notation}

\begin{notation}
  If $(E,d)$ is a metric space, then the Hausdorff distance \cite{Hausdorff} defined on the space of the closed subsets of $(E,d)$ is denoted by $\Haus{d}$. In case $E$ is a normed vector space and $d$ is the distance associated with some norm $N$, we write $\Haus{N}$ for $\Haus{d}$.
\end{notation}

\begin{definition}[{\cite[Definition 2.11]{Latremoliere14}}]\label{extent-def}
  Let $\mathds{A}_1 = (\A_1,\Lip_1,G_1,\delta_1,\alpha_1)$ and $\mathds{A}_2 = (\A_2,\Lip_2,\allowbreak G_2,\delta_2,\alpha_2)$ be two Lipschitz dynamical systems. The \emph{extent $\tunnelextent{\tau}$} of a covariant tunnel $\tau = (\D,\Lip_\D,\pi_1,\pi_2,\varsigma_1,\varsigma_2)$ from $\mathds{A}_1$ to $\mathds{A}_2$ is given as:
\begin{equation*}
\max\left\{ \Haus{\Kantorovich{\Lip}}\left(\StateSpace(\D),\pi_j^\ast\left(\StateSpace(\A_j)\right)\right) \middle\vert j\in\{1,2\} \right\} \text{.}
\end{equation*}
\end{definition}

\begin{definition}[{\cite{Latremoliere18b}}]\label{reach-def}
  Let $\varepsilon > 0$. Let $\mathds{A}_1 = (\A_1,\Lip_1,G_1,\delta_1,\alpha_1)$ and $\mathds{A}_2 = (\A_2,\Lip_2,G_2,\delta_2,\allowbreak \alpha_2)$ be two Lipschitz dynamical systems. The \emph{$\varepsilon$-reach $\tunnelreach{\tau}{\varepsilon}$} of a $\varepsilon$-covariant tunnel $\tau = (\D,\Lip_\D,\pi_1,\pi_2,\varsigma_1,\varsigma_2)$ from $\mathds{A}_1$ to $\mathds{A}_2$ is given as:
\begin{equation*}
\max_{\{j,k\}=\{1,2\}}\sup_{\varphi\in\StateSpace(\A_j)} \inf_{\psi\in\StateSpace(\A_k)}\sup_{g \in G_j\left[\frac{1}{\varepsilon}\right]} \Kantorovich{\Lip_\D}(\varphi\circ\alpha_j^g\circ\pi_j, \psi\circ\alpha_k^{\varsigma_j(g)}\circ\pi_k)
\end{equation*}
\end{definition}

The magnitude of a covariant tunnel summarizes all the data computed above.

\begin{definition}[{\cite{Latremoliere18b}}]\label{magnitude-def}
  Let $\varepsilon > 0$. The \emph{$\varepsilon$-magnitude} $\tunnelmagnitude{\tau}{\varepsilon}$ of a $\varepsilon$-covariant tunnel $\tau$ is the maximum of its $\varepsilon$-reach and its extent:
  \begin{equation*}
    \tunnelmagnitude{\tau}{\varepsilon} = \max\left\{ \tunnelreach{\tau}{\varepsilon}, \tunnelextent{\tau} \right\} \text{.}
  \end{equation*}
\end{definition}

We then define the covariant propinquity between Lipschitz dynamical systems as follows. While there are many appropriate choices for a class of tunnel used in the following definition as discussed in \cite{Latremoliere18b}, we will focus on the class of \emph{all} covariant $F$-tunnels. Thus, for a permissible function $F$ and for any $\varepsilon > 0$, and for any two Lipschitz dynamical systems $\mathds{A}$ and $\mathds{B}$, we denote the class of all $\varepsilon$-covariant $F$-tunnels from $\mathds{A}$ to $\mathds{B}$ by:
\begin{equation*}
  \tunnelset{\mathds{A}}{\mathds{B}}{F}{\varepsilon} \text{.}
\end{equation*}

\begin{definition}[{\cite{Latremoliere18b}}]\label{covariant-propinquity-def}
  Let $F$ be a permissible function. For $\mathds{A},\mathds{B}$ two Lipschitz dynamical $F$-system, the \emph{covariant $F$-propinquity} $\covpropinquity{F}(\mathds{A},\mathds{B})$ is defined as:
\begin{equation*}
  \min\left\{ \frac{\sqrt{2}}{2}, \inf\left\{ \varepsilon > 0 \middle\vert \exists \tau \in \tunnelset{\mathds{A}}{\mathds{B}}{F}{\varepsilon} \quad \tunnelmagnitude{\tau}{\varepsilon} \leq \varepsilon \right\} \right\} \text{.}
\end{equation*}
\end{definition}

In \cite{Latremoliere18b}, we prove that $\covpropinquity{F}$ is indeed a metric up to equivariant full quantum isometry.

\begin{theorem}[{\cite{Latremoliere18b}}]
  Let $F$ be a permissible function. If $(\A,\Lip_\A,G,\delta_G,\alpha)$ and $(\B,\Lip_\B,H,\delta_H,\allowbreak \beta)$ in are two Lipschitz dynamical $F$-systems, then:
   \begin{equation*}
     \covpropinquity{F}((\A,\Lip_\A,G,\delta_G,\alpha),(\B,\Lip_\B,H,\delta_H,\beta)) = 0
   \end{equation*}
if and only if there exists a full quantum isometry $\pi : (\A,\Lip_\A)\rightarrow(\B,\Lip_\B)$ and an isometric isomorphism of monoids $\varsigma: G\rightarrow H$ such that:
   \begin{equation*}
     \forall g \in G \quad \varphi\circ\alpha^g = \beta^{\varsigma(g)}\circ\varphi \text{.}
   \end{equation*}
i.e. $(\A,\Lip_\A,G,\delta_G,\alpha)$ and $(\B,\Lip_\B,H,\delta_H,\beta)$ are isomorphic as Lipschitz dynamical systems.

Moreover, $\covpropinquity{F}$ satisfies the triangle inequality and is symmetric in its arguments, so it defines a metric on the class of Lipschitz dynamical $F$-systems up to equivariant full quantum isometries.
\end{theorem}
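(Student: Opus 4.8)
The statement asserts a coincidence property together with the metric axioms, so the plan is to treat separately symmetry, the triangle inequality, and the equivalence between vanishing distance and the existence of an equivariant full quantum isometry; the bound $\covpropinquity{F} \le \frac{\sqrt 2}{2}$ needs no argument, being built into the outer minimum of Definition \ref{covariant-propinquity-def}. For symmetry, I would check that the reversal $(\D,\Lip_\D,\pi_1,\pi_2,\varsigma_1,\varsigma_2) \mapsto (\D,\Lip_\D,\pi_2,\pi_1,\varsigma_2,\varsigma_1)$ sends $\varepsilon$-covariant tunnels from $\mathds{A}_1$ to $\mathds{A}_2$ to $\varepsilon$-covariant tunnels in the opposite direction: the almost-isometry condition of Definition \ref{almost-iso-def} is manifestly invariant under exchanging the two indices, while the extent (Definition \ref{extent-def}) and the reach (Definition \ref{reach-def}) are each defined by a maximum over $j$, respectively over the two index assignments with $\{j,k\}=\{1,2\}$, hence unchanged. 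Thus $\tunnelmagnitude{\tau}{\varepsilon}$ is preserved and the defining infimum is symmetric.

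For the triangle inequality, let $\mathds{A}_1,\mathds{A}_2,\mathds{A}_3$ be Lipschitz dynamical $F$-systems, and let $\tau_1 \in \tunnelset{\mathds{A}_1}{\mathds{A}_2}{F}{\varepsilon_1}$ and $\tau_2 \in \tunnelset{\mathds{A}_2}{\mathds{A}_3}{F}{\varepsilon_2}$ with $\tunnelmagnitude{\tau_1}{\varepsilon_1}\le\varepsilon_1$ and $\tunnelmagnitude{\tau_2}{\varepsilon_2}\le\varepsilon_2$, where, by the $\frac{\sqrt 2}{2}$ cap, we may assume $\varepsilon_1,\varepsilon_2 \in (0,\frac{\sqrt 2}{2}]$. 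I would form the composite tunnel whose domain is the fibered product of $\D_1$ and $\D_2$ over $\A_2$, carrying the L-seminorm furnished by the tunnel composition of \cite{Latremoliere14}, and whose monoid data is $(\varsigma_2\circ\varsigma_1,\varkappa_1\circ\varkappa_2)$; by Lemma \ref{uiso-composition-lemma} the latter lies in $\UIso{\varepsilon_1+\varepsilon_2}{(G_1,\delta_1)}{(G_3,\delta_3)}{\frac{1}{\varepsilon_1+\varepsilon_2}}$, so the composite is a genuine $(\varepsilon_1+\varepsilon_2)$-covariant tunnel. Its extent is at most $\tunnelextent{\tau_1}+\tunnelextent{\tau_2}$ by the triangle inequality for $\Haus{\Kantorovich{\Lip_\D}}$, exactly as in the non-covariant theory. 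The genuinely new point is the reach estimate: fixing a state $\varphi$ on $\A_1$, one uses the reach of $\tau_1$ to produce a near-optimal intermediary state on $\A_2$ and then the reach of $\tau_2$ to pass to $\A_3$, combining the triangle inequality for $\Kantorovich{\Lip_\D}$ with the composition behaviour in Lemma \ref{almost-isoiso-lemma}; here the crucial radius bookkeeping, namely that $\varsigma_1$ carries $G_1[\frac{1}{\varepsilon_1+\varepsilon_2}]$ into $G_2[\frac{1}{\varepsilon_2}]$ so that $\tau_2$'s reach is available along the whole ball, succeeds precisely because $\varepsilon_1,\varepsilon_2\le\frac{\sqrt 2}{2}$, the same threshold that powers Lemma \ref{uiso-composition-lemma}. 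Taking infima then yields subadditivity.

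For the coincidence property, write $\mathds{A}=(\A,\Lip_\A,G,\delta_G,\alpha)$ and $\mathds{B}=(\B,\Lip_\B,H,\delta_H,\beta)$. The reverse implication is direct: given an equivariant full quantum isometry $(\pi,\varsigma)$, the sextuple $(\A,\Lip_\A,\mathrm{id}_\A,\pi,\varsigma,\varsigma^{-1})$ is an $\varepsilon$-covariant tunnel for every $\varepsilon>0$, since the genuine isometric isomorphism $\varsigma$ lies in $\UIso{\varepsilon}{(G,\delta_G)}{(H,\delta_H)}{\frac{1}{\varepsilon}}$ with zero defect; its extent vanishes because both $\mathrm{id}_\A^\ast$ and $\pi^\ast$ surject onto $\StateSpace(\A)$, and its reach vanishes because, choosing $\psi=\varphi\circ\pi^{-1}$ and invoking $\pi\circ\alpha^g=\beta^{\varsigma(g)}\circ\pi$, the two states $\varphi\circ\alpha^g\circ\mathrm{id}_\A$ and $\psi\circ\beta^{\varsigma(g)}\circ\pi$ coincide identically, so $\covpropinquity{F}(\mathds{A},\mathds{B})=0$. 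For the forward implication, take $\varepsilon_n\to 0$ and $\varepsilon_n$-covariant tunnels $\tau_n=(\D_n,\Lip_{\D_n},\pi_{1,n},\pi_{2,n},\varsigma_{1,n},\varsigma_{2,n})$ with $\tunnelmagnitude{\tau_n}{\varepsilon_n}\le\varepsilon_n$. Since the extent controls the underlying dual propinquity, $\tunnelextent{\tau_n}\le\varepsilon_n\to 0$ forces $\propinquity{F}((\A,\Lip_\A),(\B,\Lip_\B))=0$, so by the coincidence property of the dual Gromov-Hausdorff propinquity there is a full quantum isometry $\pi$; and since a covariant tunnel's almost isometry witnesses $\Upsilon\le\covpropinquity{F}\to 0$, Theorem \ref{upsilon-metric-thm} furnishes a monoid isometric isomorphism $\varsigma$. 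The remaining task is to realise $\pi$ and $\varsigma$ along a common subsequence and to force the intertwining: the reach bound says that, for each state $\varphi$ on $\A$, there are states $\psi_n$ on $\B$ with $\Kantorovich{\Lip_{\D_n}}(\varphi\circ\alpha^g\circ\pi_{1,n},\psi_n\circ\beta^{\varsigma_{1,n}(g)}\circ\pi_{2,n})\le\varepsilon_n$ uniformly over $g\in G[\frac{1}{\varepsilon_n}]$, and since these balls exhaust $G$ a weak*-compactness argument on the state spaces, using strong continuity and the local boundedness of $g\mapsto\dil{\beta^g}$, lets one pass to the limit in the reach inequality; as $\varphi$ ranges over $\StateSpace(\A)$ and $g$ over $G$, the resulting equalities of states combine to give $\pi\circ\alpha^g=\beta^{\varsigma(g)}\circ\pi$.

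The main obstacle is this last forward argument. The difficulty is that the dual propinquity and the monoid distance $\Upsilon$ each manufacture their limit object, the full quantum isometry $\pi$ and the isometric isomorphism $\varsigma$, by an independent compactness procedure, with no a priori compatibility between them; reconciling them into a single intertwining pair requires threading the reach estimate simultaneously through both limiting arguments, exploiting that the reach constrains the two actions and the almost isometries at once. A secondary technical difficulty is the uniformity in the reach bookkeeping for the triangle inequality, where the error must be controlled over monoid balls of radius $\frac{1}{\varepsilon_j}$ that diverge as the $\varepsilon_j$ shrink.
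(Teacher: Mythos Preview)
The paper does not prove this theorem: it is quoted from \cite{Latremoliere18b} as part of the background section, and no argument is supplied in the present paper. There is therefore nothing here to compare your proposal against directly.

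On its own merits your outline is sound. Symmetry via tunnel reversal and the easy direction of coincidence (building the identity tunnel from an equivariant full quantum isometry) are correct as written. For the triangle inequality you correctly identify the two ingredients --- the fibered-product tunnel composition from \cite{Latremoliere14} and the almost-isometry composition of Lemma \ref{uiso-composition-lemma} --- and you isolate the reach estimate over the growing balls $G_1[\frac{1}{\varepsilon_1+\varepsilon_2}]$ as the genuinely covariant part, including the radius bookkeeping that the $\frac{\sqrt{2}}{2}$ threshold makes work.

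For the hard direction of coincidence your diagnosis is accurate but the execution remains a sketch. The subtle point you flag is real: citing the coincidence theorems for $\dpropinquity{F}$ and for $\Upsilon$ separately yields only the \emph{existence} of some $\pi$ and some $\varsigma$, each extracted by its own diagonal argument along its own subsequence, with no a priori link between them. What is actually needed --- and what the proof in \cite{Latremoliere18b} does --- is to re-run both extraction arguments simultaneously on the \emph{same} sequence of covariant tunnels $\tau_n$: build $\pi$ from the target-set machinery of these particular tunnels and $\varsigma$ as a pointwise limit of the attached $\varsigma_{1,n}$, so that the reach inequality $\Kantorovich{\Lip_{\D_n}}(\varphi\circ\alpha^g\circ\pi_{1,n},\psi_n\circ\beta^{\varsigma_{1,n}(g)}\circ\pi_{2,n})\leq\varepsilon_n$ survives to the limit and yields $\pi\circ\alpha^g=\beta^{\varsigma(g)}\circ\pi$ after separating by states. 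Your phrase ``threading the reach estimate simultaneously through both limiting arguments'' names this correctly; turning it into a proof means not invoking the two coincidence theorems as black boxes but rather opening them up and running them in tandem.
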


This paper is concerned with the question of the completeness of the covariant propinquity on certain classes of Lipschitz dynamical systems. We begin with the matter of completeness for $\Upsilon$.

\section{Cauchy sequences of proper monoids for $\Upsilon$}

An interesting problem arises when studying the completeness of $\Upsilon$: given a Cauchy sequence for $\Upsilon$, the construction of a potential limit guided by the completeness of the Gromov-Hausdorff distance may not be a topological monoid in general, without assuming some form of uniform equicontinuity of the right translations of the monoids in our sequence --- properly defined, as we shall see below. This is actually a common condition to impose on functions over sequences of metric spaces converging for the Gromov-Hausdorff distance in order to obtain a form of convergence of the functions themselves. This issue can be however managed for certain classes of proper monoids, as we will see at the end of this section.

We begin with a definition which we will use to capture the equicontinuity of right translations for a sequence of proper monoids.

\begin{notation}
  We write:
  \begin{equation*}
    \prod_{n\in\N} G_n = \left\{ (g_n)_{n\in\N} : \exists M > 0 \quad \forall n \in \N \quad g_n \in G_n[M] \right\} \text{.}
  \end{equation*}
\end{notation}

\begin{definition}\label{regular-sequence-def}
  Let $(G_n,\delta_n)_{n\in\N}$ be a sequence of proper monoids. The set of \emph{regular sequences} $\mathcal{R}((G_n,\delta_n)_{n\in\N})$ is:
      \begin{equation*}
        \left\{ (g_n)_{n\in\N} \in \prod_{n\in\N} G_n \middle\vert 
          \begin{array}{l}
            \forall\varepsilon > 0\quad\exists \omega>0 \quad \exists N \in \N \\
            \forall n \geq N \quad \forall h,k \in G_n \\
            \delta_n(h,k)<\omega \implies \delta_n(h g_n, k g_n) < \varepsilon \text{.}
          \end{array}  \right\} \text{.}
      \end{equation*}
\end{definition}

While it is unclear in general how large the set of regular sequences associated to a sequence of proper monoids may be, it is always a monoid.

\begin{lemma}\label{regular-lemma}
  $\mathcal{R}((G_n,\delta_n)_{n\in\N})$ is a monoid for the pointwise multiplication.
\end{lemma}
\begin{proof}
  First, we note that the sequence $(e_n)_{n\in\N}$ of the identity elements of $(G_n)_{n\in\N}$ is regular.

  Let $(g_n)_{n\in\N}$ and $(g'_n)_{n\in\N}$ be regular sequences. First note that since for all $n\in\N$, the metric $\delta_n$ is left invariant, we have for all $n\in\N$:
\begin{equation*}
  \delta_n(g_n g_n', e_n) \leq \delta_n(g_n g'_n, g_n) + \delta_n(g_n,e_n) = \delta_n(g'_n, e_n) + \delta_n(g_n,e_n)
\end{equation*}
and thus $(g_n g'_n)_{n\in\N}$ is bounded since $(g_n)_{n\in\N}$ and $(g'_n)_{n\in\N}$ are.

Let $\varepsilon > 0$. There exists $\omega>0$ and $N\in\N$ such that if $n\geq N$ and if $h,k \in G_n$, and if $\delta_n(h,k) < \omega$ then $\delta_n(h g'_n, k g'_n) < \varepsilon$. Now, there exists $\omega_2 > 0$ and $N_1 \in \N$ such that if $n\geq N_1$, if $h,k \in G_n$ and if $\delta_n(h,k) < \omega_2$ then $\delta_n(h g_n, k g_n) < \omega$. Hence if $n\geq\max\{N,N_1\}$ and if $h,k \in G_n$ and if $\delta_n(h,k) < \omega_2$ then:
\begin{align*}
  \delta_n(h g_n g'_n, k g_n g'_n) < \varepsilon \text{.}
\end{align*}
Hence $(g_n g'_n)_{n\in\N}$ is regular. Thus $\mathcal{R}$ is closed under pointwise product, hence it is a monoid, as the multiplication is easily checked to be associative.
\end{proof}

We now can prove our theorem on convergence of Cauchy sequences for our metric $\Upsilon$. If $(G_n,\delta_n)_{n\in\N}$ is a Cauchy sequence of proper monoids for $\Upsilon$, then there exists a subsequence $(G_{j(n)},\delta_{j(n)})_{n\in\N}$ such that:
\begin{equation*}
  \sum_{n=0}^\infty \Upsilon((G_{j(n)},\delta_{j(n)}),(G_{j(n+1)},\delta_{j(n+1)})) < \infty\text{.}
\end{equation*}
We will work with such subsequences in the next result.

\begin{theorem}\label{Upsilon-completeness-thm}
  Let $(G_n,\delta_n)_{n\in\N}$ be a sequence such that for all $n\in\N$, there exists $\varepsilon_n > 0$ and:
\begin{equation*}
(\varsigma_n,\varkappa_n) \in \UIso{\varepsilon_n}{(G_n,\delta_n)}{(G_{n+1},\delta_{n+1})}{\frac{1}{\varepsilon_n}}
\end{equation*}
such that:
  \begin{enumerate}
    \item $\sum_{n=0}^\infty \varepsilon_n < \infty$,
    \item for all $N\in\N$ and $g \in G_N\left[ \frac{1}{\sum_{n=N}^\infty \varepsilon_n} \right]$:
\begin{equation*}
  \varpi_N(g) = \left(\begin{cases}
      g_n = e_n \text{ if $n < N$,}\\
      g_n = g    \text{ if $n = N$,}\\
      g_n = \varsigma_{n-1}(g_{n-1}) \text{ if $n>N$.}
      \end{cases}\right)_{n\in\N} \in \mathcal{R}((G_n,\delta_n)_{n\in\N})\text{,}
  \end{equation*}
  \end{enumerate}
then there exists a proper monoid $(G,\delta)$ such that $\lim_{n\rightarrow\infty} \Upsilon((G_n,\delta_n),(G,\delta)) = 0$.
\end{theorem}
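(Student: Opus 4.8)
The plan is to realize the limit monoid as a space of \emph{coherent threads}, mirroring the construction of a limit for the pointed Gromov-Hausdorff distance but carrying along the monoid structure. Write $s_N = \sum_{n\geq N}\varepsilon_n$, so that hypothesis (1) gives $s_N \to 0$ and hypothesis (2) applies to $g \in G_N[1/s_N]$. I would first consider the set $\mathcal{T}$ of sequences $(g_n)_{n\in\N} \in \mathcal{R}((G_n,\delta_n)_{n\in\N})$ for which $\sum_{n}\delta_{n+1}(\varsigma_n(g_n),g_{n+1}) < \infty$; the forward threads $\varpi_N(g)$ of hypothesis (2) are the basic examples, being exactly coherent ($g_{n+1}=\varsigma_n(g_n)$ beyond index $N$) and regular by assumption. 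On $\mathcal{T}$ I define $\delta((g_n),(h_n)) = \lim_{n\to\infty}\delta_n(g_n,h_n)$.

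The first task is to show this limit exists. Since the threads are coherent up to a summable error, Lemma~\ref{almost-isoiso-lemma}, item (5), applied to $\varsigma_n$ shows $|\delta_{n+1}(\varsigma_n(g_n),\varsigma_n(h_n)) - \delta_n(g_n,h_n)| \leq 2\varepsilon_n$ on the relevant balls, and the coherence defect contributes another summable term; hence $(\delta_n(g_n,h_n))_n$ is Cauchy in $\R$ and converges. Here I must check that the boundedness of threads keeps all relevant points inside balls of radius at most $1/\varepsilon_n$, which follows from item (2) of Lemma~\ref{almost-isoiso-lemma} together with the inequality $1/s_N + \varepsilon_N \leq 1/s_{N+1}$, valid once $s_N s_{N+1} \leq 1$, i.e. for all large $N$. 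This $\delta$ is a pseudo-metric; I pass to the quotient $G = \mathcal{T}/{\sim}$ by the relation $\delta = 0$ to obtain a genuine metric space, with distinguished point $e = [(e_n)]$. Left invariance of each $\delta_n$ passes to the limit, so $\delta$ will be left invariant on $G$ once the monoid structure is in place.

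The heart of the argument, and the step where hypothesis (2) is indispensable, is equipping $G$ with a continuous monoid multiplication defined by $[(g_n)]\cdot[(h_n)] = [(g_n h_n)]$. That $(g_n h_n)$ again lies in $\mathcal{T}$ uses three facts: it is bounded and regular because $\mathcal{R}$ is a monoid under pointwise product (Lemma~\ref{regular-lemma}); and it is coherent up to summable error because Lemma~\ref{almost-isoiso-lemma}, item (4), gives $\delta_{n+1}(\varsigma_n(g_n)\varsigma_n(h_n),\varsigma_n(g_n h_n)) \leq 2\varepsilon_n$, which combines with the coherence of $(g_n)$ and $(h_n)$. Independence of the choice of representatives is where regularity enters decisively: if $\delta_n(g_n,g'_n)\to 0$ and $\delta_n(h_n,h'_n)\to 0$, then $\delta_n(g_n h_n, g'_n h'_n)$ is bounded by $\delta_n(g_n h_n, g'_n h_n) + \delta_n(h_n, h'_n)$ (using left invariance on the second summand), and the first summand tends to $0$ precisely because the thread $(h_n)$ is regular, so right translation by $h_n$ is asymptotically equicontinuous. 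The same regularity yields joint continuity of multiplication, left invariance being already available.

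Finally I would verify that $(G,\delta)$ is proper and that $\Upsilon((G_n,\delta_n),(G,\delta))\to 0$. Properness follows by an Arzel\`a--Ascoli / diagonal extraction: a bounded sequence of threads takes values in fixed closed balls of the proper spaces $G_n$, and a diagonal argument over the indices produces a convergent subsequence whose limit is again a thread by equicontinuity, showing closed balls of $G$ are compact. For convergence, I define $\varsigma^\infty_N : g \in G_N[1/s_N] \mapsto [\varpi_N(g)] \in G$ and the evaluation $\varkappa^\infty_N : [(g_n)] \mapsto g_N$, and estimate that $(\varsigma^\infty_N,\varkappa^\infty_N)\in \UIso{\eta_N}{(G_N,\delta_N)}{(G,\delta)}{1/\eta_N}$ for some $\eta_N$ controlled by the tail $s_N$, using item (5) of Lemma~\ref{almost-isoiso-lemma} telescoped along the thread and the composition Lemma~\ref{uiso-composition-lemma}; since $\eta_N\to 0$, Definition~\ref{group-GH-def} gives $\Upsilon((G_N,\delta_N),(G,\delta))\leq \eta_N \to 0$. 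I expect the main obstacle to be the monoid step: proving multiplication is well defined and continuous on the quotient, which is exactly what the regularity hypothesis (2) is engineered to supply, together with the bookkeeping of which balls the bounded threads inhabit so that every invocation of Lemma~\ref{almost-isoiso-lemma} is legitimate.
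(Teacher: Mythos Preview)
Your overall architecture matches the paper's: build the limit as equivalence classes of coherent, regular threads with the pseudo-metric $\limsup_n\delta_n(\cdot,\cdot)$, use Lemma~\ref{regular-lemma} and the regularity hypothesis to make pointwise multiplication well defined and continuous, and exhibit $(\varpi_N,\sigma_N)$ as the almost isometries. Two points, however, need repair.

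\textbf{Closure of $\mathcal{T}$ under products.} Your coherence condition is \emph{summable} defect $\sum_n\delta_{n+1}(\varsigma_n(g_n),g_{n+1})<\infty$. When you multiply two threads, the estimate
\[
\delta_{n+1}\bigl(\varsigma_n(g_n h_n),\,g_{n+1}h_{n+1}\bigr)\;\leq\;2\varepsilon_n+\delta_{n+1}\bigl(\varsigma_n(g_n)\varsigma_n(h_n),\,g_{n+1}\varsigma_n(h_n)\bigr)+\delta_{n+1}(\varsigma_n(h_n),h_{n+1})
\]
has a middle term which is a right translation of $\delta_{n+1}(\varsigma_n(g_n),g_{n+1})$. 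Regularity only furnishes an $\varepsilon$--$\omega$ modulus, so it tells you this term tends to $0$, not that it is summable. Thus $\mathcal{T}$ need not be a monoid. The paper avoids this by using the weaker ``Cauchy'' coherence: for every $\varepsilon>0$ there is $N$ with $\delta_j(g_j,\varsigma_n^j(g_n))<\varepsilon$ for all $j>n\geq N$. That condition \emph{is} stable under products (exactly the computation you sketch, reading ``$<\varepsilon$ eventually'' instead of ``summable''), and for it $\limsup_n\delta_n$ (not $\lim$) is the right pseudo-metric.

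\textbf{Properness.} Your diagonal-extraction argument has a genuine gap: if $(g^{(k)})_k$ is a bounded sequence of threads and you extract so that $g_n^{(k)}\to g_n$ for each $n$, there is no reason the limit sequence $(g_n)_n$ is \emph{regular}; Definition~\ref{regular-sequence-def} is not closed under pointwise limits, and ``equicontinuity'' is not available. Even if it were regular, pointwise convergence of components does not by itself give convergence for $\delta$. The paper proceeds differently: it shows each ball $G_\infty[R]$ is \emph{totally bounded} by using a finite $\tfrac{\varepsilon}{2}$-net in the compact ball $G_N[R+1+\tfrac{\varepsilon}{2}]$ and pushing it forward via $\varpi_N$ (these images are regular by hypothesis~(2), so they lie in the thread space). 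One then takes the metric completion $\overline{G_\infty}$, which is proper, and extends multiplication to the completion using the uniform continuity of left and right translations already established on $G_\infty$. Your outline omits this completion step entirely; without it you do not obtain a proper monoid.
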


\begin{proof}
Without loss of generality, we can actually assume that $\sum_{j=0}^\infty\varepsilon_j < \frac{\sqrt{2}}{2}$ (by simply truncating our original sequence).

It will be helpful to define $\varsigma_n^k = \varsigma_{k-1} \circ \ldots \circ \varsigma_n$ and similarly $\varkappa_k^n = \varkappa_{n+1} \circ \ldots \circ \varkappa_k$ for $k > n \in \N$. We also set $\varsigma_n^k(g) = e_k$ and $\varkappa_k^n(h) = e_n$ for all $g\in G_n, h\in G_k$ and $k < n \in \N$ and $\varsigma_n^n$ and $\varkappa_n^n$ are set to the identity of $G_n$. By Lemma (\ref{uiso-composition-lemma}), we note that for all $k>n \in \N$:
\begin{equation*}
  (\varsigma_n^k,\varkappa_k^n) \in \UIso{\sum_{j=n}^{k-1} \varepsilon_j}{(G_n,\delta_n)}{(G_k,\delta_k)}{\frac{1}{\sum_{j=n}^{k-1}\varepsilon_j}} \text{.}
\end{equation*}

Let:
\begin{equation*}
  H_\infty = \left\{ (g_n)_{n\in\N} \in \mathcal{R}((G_n,\delta_n)_{n\in\N}) \middle\vert
    \begin{array}{l}
      \forall\varepsilon>0 \quad \exists N\in \N \\
      \forall n \geq N \quad \forall j > n \quad \delta_{j}(g_{j},\varsigma_n^j(g_n)) < \varepsilon
    \end{array}
 \right\}\text{.}
\end{equation*}

We first note that for all $N\in\N$ and $g \in G_N\left[ \frac{1}{\sum_{n=N}^\infty\varepsilon_n}\right]$, we have $\varpi_N(g) \in \mathcal{R}((G_j,\delta_j)_{j\in\N})$ by assumption. Moreover if $n\geq N$ and $j > N$, and if we write $\varpi_N(g)$ as $(g_k)_{k\in\N}$, then:
\begin{equation*}
  \delta_j(g_j,\varsigma_n^j(g_n)) = \delta_j(\varsigma_N^j(g),\varsigma_n^j\circ\varsigma_N^n(g)) = 0 \text{.}
\end{equation*}
Hence $\varpi_n(g) \in H_\infty$ and in particular, $H_\infty$ is not empty.

We also define the equivalence relation on $H_\infty$ by:
\begin{equation*}
  (g_n)_{n\in\N} \sim (h_n)_{n\in\N} \iff \lim_{n\rightarrow\infty} \delta_n(g_n,h_n) = 0\text{.}
\end{equation*}
We set $G_\infty = \bigslant{H_\infty}{\sim}$ and we set $q : H_\infty \twoheadrightarrow G_\infty$ the canonical surjection. 

We now define, for all $(g_n)_{n\in\N}, (h_n)_{n\in\N} \in \prod_{n\in\N} G_n$:
\begin{equation*}
D((g_n)_{n\in\N}, (h_n)_{n\in\N}) = \limsup_{n\rightarrow\infty}\delta_n(g_n, h_n) \text{.} 
\end{equation*}
The function $D$ is a pseudo-metric on $G_\infty$. 

We note that $D(g,h) = 0$ if and only if $g\sim h$. Consequently, $D$ induces a metric on $G_\infty$ which we denote as $\delta_\infty$.

We turn to the matter of defining a multiplication on $G_\infty$. First, we prove that $H_\infty$ is closed under pointwise multiplication. let $(g_n)_{n\in\N}, (g'_n)_{n\in\N} \in H_\infty$. By Lemma (\ref{regular-lemma}), the sequence $(g_n g_n')_{n\in\N}$ is regular. Moreover, since $(g_n)_{n\in\N}$, $(g'_n)_{n\in\N}$ and $(g_n g'_n)_{n\in\N}$ are bounded and $\lim_{n\rightarrow\infty}\varepsilon_n = 0$, there exists $N_0\in\N$ such that for all $n\geq N_0$, we have:
\begin{equation*}
  g_n, g'_n, g_n g'_n  \in G_n \left[\frac{1}{\sum_{j=N_0}^\infty \varepsilon_j}\right] \text{.}
\end{equation*}
Let $\varepsilon > 0$. Since $(g'_n)_{n\in\N}$ is regular, there exists $\omega>0$ and $N_1\in\N$ such that for all $n\geq N_1$, if $h,k\in G_n$, and if $\delta_n(h,k) < \omega$, then $\delta_n(h g'_n, k g'_n) < \frac{\varepsilon}{3}$. 

Let $N_2\in\N$ such that $\sum_{n=N_2}^\infty \varepsilon_n \leq \min\{\frac{\omega}{2},\frac{\varepsilon}{6}\}$. Let $N_3\in\N$ such that for all $n\geq N_3$ and for all $j > n$, we have $\delta_{j}(g'_{j},\varsigma_n^j(g'_n))<\frac{\varepsilon}{6}$. Let $N_4\in\N$ such that $\delta_{j}(g_{j},\varsigma_n^j(g_n))< \frac{\omega}{2}$ if $n\geq N_4$ and $j > n$. We note that if $n\geq \max\{N_0,N_2,N_3,N_4\}$ and $j > n$, by Assertion (1) of Lemma (\ref{almost-isoiso-lemma}):
\begin{align*}
  \delta_{n}(\varkappa_j^n(g_{j}),g_n) &\leq \delta_{j}(g_{j},\varsigma_n^j(g_n)) + \sum_{k=n}^j \varepsilon_k \leq \omega \text{ and }\delta_n(\varkappa_j^n(g'_{j}),g'_n) \leq \frac{\varepsilon}{3} \text{.}
\end{align*}

Using Definition (\ref{almost-iso-def}), if $n \geq \max\{N_0,N_1,N_2,N_3,N_4\}$ and $j > n$, then since $g_j, g_j' \in G_j\left[\frac{1}{\sum_{k=N_0}^\infty \varepsilon_k}\right]$, $g_n g_n' \in G_n\left[\frac{1}{\sum_{k=N_0}^\infty\varepsilon_k} \right]$, and $(\varsigma_n^j,\varkappa_n^j)$ is an $\frac{1}{\sum_{k=N_0}^\infty\varepsilon_k}$-local $\sum_{k=n}^j \varepsilon_k$-almost isometry from $G_n$ to $G_j$, we conclude:
\begin{align*}
  \delta_{j}(g_{j} g'_{j}, \varsigma_n^j(g_n g'_n)) 
  &\leq \sum_{k=n}^j \varepsilon_k + \delta_{n}(\varkappa_j^n(g_{j})\varkappa_j^n(g'_{j}),g_n g'_n) \\
  &\leq \frac{\varepsilon}{3} + \delta_{n}(\varkappa_n^j(g_{j})\varkappa_n^j(g'_{j}),\varkappa_j^n(g_{j})g'_{n}) + \delta_{n}(\varkappa_j^n(g_{j})g'_{n}, g_n g'_n) \\
  &\leq \frac{\varepsilon}{3} + \delta_n(\varkappa_j^n(g'_{j}),g'_{n}) + \frac{\varepsilon}{3} \\
  &\leq \varepsilon \text{.}
\end{align*}

Hence $(g_n g_n')_{n\in\N} \in H_\infty$.

Our next step is to prove that the pointwise product of equivalent sequences are again equivalent. Let $(g_n)_{n\in\N}$, $(g'_n)_{n\in\N}$, $(h_n)_{n\in\N}$ and $(h'_n)_{n\in\N}$ be four elements of $H_\infty$ such that $(g_n)_{n\in\N} \sim (g'_n)_{n\in\N}$ and $(h_n)_{n\in\N} \sim (h'_n)_{n\in\N}$. Let $\varepsilon > 0$. Since $(h'_n)_{n\in\N}$ is regular, there exists $\omega > 0$ and $N\in\N$ such that if $n\geq N$, if $j,k \in G_n$, and if  $\delta_n(k,j) < \omega$, then $\delta_n(k h'_n, j h'_n) < \frac{\varepsilon}{2}$.

Now, there exists $N_1 \in \N$ such that if $n\geq N_1$ then $\delta_n(g_n,g'_n) < \omega$. There exists $N_2 \in \N$ such that if $n\geq N_2$ then $\delta_n(h_n,h'_n) < \frac{\varepsilon}{2}$. Thus, we estimate that for $n\geq\max\{N,N_1,N_2\}$:
\begin{align*}
\delta_n(g_n h_n, g'_n h'_n) &\leq \delta_n(g_n h_n, g_n h'_n) + \delta_n(g_n h_n', g_n' h_n')\\
&\leq \delta_n(h_n, h'_n) + \frac{\varepsilon}{2} \leq \varepsilon \text{.}
\end{align*}

Hence $(g_n h_n)_{n\in\N} \sim (g_n' h'_n)_{n\in\N}$. We therefore define $gh$, for $g,h \in G_\infty$, to be the equivalence class of $(g_n h_n)_{n\in\N}$ for any $(g_n)_{n\in\N}, (h_n)_{n\in\N} \in H_\infty$ such that $q((g_n)_{n\in\N}) = g$ and $q((h_n)_{n\in\N}) = h$.

It is then easy to check that this operation is associative since the law is associative on each $G_n$ for all $n\in\N$. Moreover, it is easy to check that the equivalence class of $(e_n)_{n\in\N}$, where $e_n$ is the unit of $G_n$ for each $n\in\N$ is  the identity of $G_\infty$. 

Moreover, it is also immediate that the distance $\delta_\infty$ on $G_\infty$ is left-invariant for the multiplication thus defined since for all $n\in\N$, the distance $\delta_n$ is left-invariant.

We now turn to the continuity of the multiplication on $G_\infty$. Let $\varepsilon > 0$, $h = q((h_n)_{n\in\N}) \in G_\infty$, $g = q((g_n)_{n\in\N})$ and $h' = q((h'_n)_{n\in\N})$, such that $\delta_\infty(h,h') < \frac{\varepsilon}{2}$. By regularity, there exists $N\in\N$ and $\omega > 0$ such that for all $n\geq N$, if $g,h\in G_n$ and $\delta_n(g,h) < \omega$, then $\delta_n(g h_n,h h_n) < \varepsilon$. Let now $g' = q((g'_n)_{n\in\N})$ in $G_\infty$ such that $\delta_\infty(g,g') < \omega$. There exists $N_1 \geq N$ such that for all $n\geq N_1$, we have $\delta_n(g_n,g'_n) < \omega$ and $\delta_n(h_n,h_n')<\frac{\varepsilon}{2}$, and thus:
\begin{align*}
  \delta_n( g'_n h'_n, g_n h_n )  &\leq \delta_n(g'_n h'_n , g'_n h_n) + \delta_n(g'_n h_n, g_n h_n) \\
  &= \delta_n(h'_n,h_n) + \frac{\varepsilon}{2} \leq \varepsilon \text{.}
\end{align*}
So $\delta_\infty(g' h', g h) \leq \varepsilon$. Hence, the multiplication is jointly continuous at $(g,h) \in G_\infty^2$. In fact, $h \in G_\infty \mapsto g h$ is uniformly continuous for all $g\in G_\infty$.

We now check that the closed balls in $G_\infty$ for $\delta_\infty$ are totally bounded. Let $R > 0$. Let $\varepsilon > 0$. There exists $N\in\N$ such that $\sum_{n=N}^\infty\varepsilon_n < \frac{\varepsilon}{4}$ and $R + 1 + \frac{\varepsilon}{2} \leq \frac{1}{\sum_{n=N}^\infty\varepsilon_n} - \sum_{n=N}^\infty\varepsilon_n$. 

Let $g\in G_\infty[R]$. Let $(g_n)_{n\in\N} \in H_\infty$ such that $q((g_n)_{n\in\N}) = g$. We thus have $\limsup_{n\rightarrow\infty}\delta_n(g_n,e_n) < R + 1$. Thus there exists $N_1\in\N$, $N_1\geq N$ such that for all $n\geq N_1$, we have $\delta_n(g_n,e_n) \leq R + 1$. Note that:
\begin{equation*}
\forall n \geq N_1 \quad \delta_n(g_n,e_n) \leq R + 1 \leq \frac{1}{\sum_{k=N}^\infty\varepsilon_k} \leq \frac{1}{\sum_{k=N_1}^\infty \varepsilon_k} \leq \frac{1}{\sum_{k=n}^\infty\varepsilon_k} \text{.}
\end{equation*}
Of course, $N_1$ depends on $(g_n)_{n\in\N}$, a dependence which we now remove by changing our choice of a representative of $q(g)$.

Let us therefore define $g'_n \in G_n$ by setting $(g'_n)_{n\in\N}=\varpi_{N_1}(g_{N_1})$. By definition of $H_\infty$, we have:
\begin{align*}
  \lim_{n\rightarrow\infty} \delta_n(g'_{n}, g_{n}) &= \lim_{n\rightarrow\infty}\delta_n(\varsigma_{N_1}^n(g_{N_1}),g_n) = 0\text{.}
\end{align*}
Hence $(g'_n)_{n\in\N}\sim (g_n)_{n\in\N}$. On the other hand, $\delta_N(g'_N,e_N) \leq \frac{\varepsilon}{2} + \delta_{N_1}(g_{N_1},e_{N_1}) \leq R + 1 + \frac{\varepsilon}{2}$ by Assertion (4) of Lemma (\ref{almost-isoiso-lemma}).

Now, since $G_N$ is proper, the closed ball $G_N[R+1+\frac{\varepsilon}{2}]$ is compact, hence there exists a finite, $\frac{\varepsilon}{2}$-dense subset $F$ of this ball. We then note that there exists $h \in F$ such that $\delta_N(g_N,h) < \frac{\varepsilon}{2}$. Therefore by Assertion (4) of Lemma (\ref{almost-isoiso-lemma}):
\begin{align*}
  D((g'_n)_{n\in\N}, \varpi_N(h)) &= \limsup_{n\rightarrow\infty} \delta_n(\varsigma_N^n(g_N'), \varsigma_N^n(h)) \\
  &\leq \delta_N(g_N',h) + \frac{\varepsilon}{2} \leq \varepsilon \text{.}
\end{align*}
Hence $G_\infty[R]$ is totally bounded as desired.

It then follows that the metric completion $\overline{G_\infty}$ of $G_\infty$ for $\delta_\infty$ is a proper metric space: if $R > 0$ then $\overline{G_\infty}[R]$ lies inside the closure of $G_\infty[R+1]$ and thus it is totally bounded. As a totally bounded, closed subset of a complete metric space, we conclude $\overline{G_\infty}[R]$ is compact as desired.

Moreover, for all $g\in G_\infty$, the map $h \in G_\infty \mapsto g h$ is uniformly continuous and thus, it admits a unique extension to $\overline{G_\infty}$. We note that by continuity, for any $h, h' \in \overline{G_\infty}$, and $g \in G_\infty$, we have:
\begin{align*}
  \left|\delta_\infty(g h, g h') - \delta_\infty(h,h')\right| 
  &= \lim_{\substack{ k \rightarrow h \\ k \in G_\infty }} \lim_{{\substack{ k' \rightarrow h' \\ k' \in G_\infty }}} \left| \delta_\infty(g k, g k') - \delta_\infty(k,k')\right| \\
  &= \lim_{\substack{ k \rightarrow h \\ k \in G_\infty }} \lim_{{\substack{ k' \rightarrow h' \\ k' \in G_\infty }}} \left| \delta_\infty(k,k') - \delta_\infty(k,k') \right|  = 0 \text{.} 
\end{align*}

Now, let $h \in \overline{G_\infty}$. Let $\varepsilon > 0$. There exists $h' \in G_\infty$ with $\delta_\infty(h,h') < \frac{\varepsilon}{3}$. Moreover, there exists $\omega > 0$ and $N\in\N$ such that if $n\geq N$, $g,g' \in G_n$ and $\delta_n(g,g') < \omega$ then $\delta_n(g h'_n, g' h'_n) < \frac{\varepsilon}{3}$. Hence, for all $g,g' \in G_\infty$ with $\delta_\infty(g,g') < \omega$, we have:
\begin{align*}
  \delta_\infty(g h, g' h) &\leq \delta_\infty(g h, g h') + \delta_\infty(g h', g' h') + \delta_\infty(g' h', g' h) \\
  &= \delta_\infty(h,h') + \limsup_{n\rightarrow\infty} \delta_n (g_n h'_n, g'_n h'_n) + \delta_\infty(h',h) \\
  &\leq \frac{\varepsilon}{3} + \frac{\varepsilon}{3} + \frac{\varepsilon}{3} = \varepsilon \text{.}
\end{align*}
Thus $g \in G_\infty \mapsto g h$ is uniformly continuous for any $h \in \overline{G_\infty}$, and thus it too admits a unique extension to $\overline{G_\infty}$. We have defined a multiplication on $\overline{G_\infty}$. 

Now, for all $h,h' \in \overline{G_\infty}$ and for all $g \in \overline{G_\infty}$, using continuity, we obtain for any $g' \in G_\infty$: 
\begin{align*}
\left|\delta_\infty(g h, g h') - \delta_\infty(h,h')\right|&\leq\left|\delta_\infty(g h, g h') - \delta_\infty(g' h, g h')\right| + \left|\delta_\infty(g' h, g h') - \delta_\infty(h,h')\right| \\
&\leq \delta_\infty(g h, g' h) + \left|\delta_\infty(g' h, g h') - \delta_\infty(g' h, g' h')\right| \\
&\quad +\left|\delta_\infty(g' h, g' h') - \delta_\infty(h,h')\right|\\
&\leq \delta_\infty(g h, g' h) + \delta_\infty(g h', g' h') + 0 \\
&\xrightarrow{\substack{g' \rightarrow g \\ g'\in G_\infty}} 0 \text{.}
\end{align*}
Hence $\delta_\infty$ is left invariant by our newly defined multiplication on $\overline{G_\infty}$.

Furthermore, let $h \in \overline{G_\infty}$. Let $\varepsilon > 0$. By uniform continuity of $g\in \overline{G_\infty}\mapsto g h$, there exists $\omega>0$ such that if $g,g' \in \overline{G_\infty}$ and $\delta_\infty(g,g') < \omega$ then $\delta_\infty(g h, g' h) \leq \frac{\varepsilon}{2}$. Consequently, if $g', h' \in \overline{G_\infty}$ with $\delta_\infty(g,g') < \omega$ and if $\delta_\infty(h,h') < \frac{\varepsilon}{2}$ then:
\begin{equation*}
  \delta_\infty(g' h', g h) \leq \delta_\infty(g' h', g' h) + \delta_\infty(g' h, g h) = \delta_\infty(h,h') + \frac{\varepsilon}{2} < \varepsilon \text{.}
\end{equation*}
Therefore, our multiplication is indeed jointly continuous at every point of $G_\infty$ and $\delta_\infty$ is left-invariant for the multiplication. Therefore, $\overline{G_\infty}$ is a proper monoid, as desired.

Our last step is to prove that $(G_n,\delta_n)_{n\in\N}$ converges to $(\overline{G_\infty},\delta_\infty)$ for $\Upsilon$. To begin with, we denote $q\circ\varpi_N$ as $\varpi_N$ for all $N\in\N$, to keep our notations simple. We now define the other maps for our almost isometric isomorphisms.

Let $N\in\N$. For any $g \in \overline{G_\infty}$, there exists $\psi_N(g) \in H_\infty$ such that $\delta_\infty(g,q\circ\psi_N(g)) < \varepsilon_n$. Writing $\psi_N(g) = (h_n)_{n\in\N}$, we then set $\sigma_N(g) = h_N$. Of course, this definition depends on our choice function $\psi_N$.

Let $\varepsilon \in \left(0 , \frac{1}{2} \right)$, and let $N\in\N$ such that $\sum_{n=N}^\infty\varepsilon_n < \frac{\varepsilon}{4}$. Note that $\frac{1}{\sum_{n=N}^\infty\varepsilon_n} > \frac{4}{\varepsilon} > \frac{1}{\varepsilon}$. So for all $n>N$, we note that:
\begin{equation*}
  (\varsigma_N^n,\varkappa_n^N) \in \UIso{\sum_{j=N}^\infty\varepsilon_j}{(G_N,\delta_N)}{(G_n,\delta_n)}{\frac{4}{\varepsilon}} \text{.}
\end{equation*}

Let $n\geq N$, $g,h \in G_N\left[\frac{1}{\varepsilon}\right]$, and $k \in \overline{G_\infty}\left[\frac{1}{\varepsilon}\right]$. We write $(k_n)_{n\in\N} = \psi_N(k)$. Note that if $e = q((e_n)_{n\in\N})$ then:
\begin{align*}
  D(\psi_N(k),(e_n)_{n\in\N}) 
  &= \delta_\infty(q\circ\psi_N(k), e) \leq \delta_\infty(q\circ\psi_N(k),k) + \delta_\infty(e, k)\\
  &\leq \frac{1}{\varepsilon} + \varepsilon_N < \frac{1}{\varepsilon} + \frac{\varepsilon}{4} \leq \frac{2}{\varepsilon} \text{.} 
\end{align*}
By definition of $H_\infty$, there exists $N_1\in\N$ such that for all $n\geq N_1$ and for all $j > n$, we have $\delta_j(k_j,\varsigma_n^j(k_n)) < \frac{\varepsilon}{4}$. Now for all $n\geq \max\{N, N_1\}$ and $j >  n$, we note that:
\begin{equation*}
  \delta_n(k_n,\varkappa_j^n(k_j)) \leq \delta_j(k_j,\varsigma_n^j(k_n)) + \sum_{k=n}^{j-1}\varepsilon_k \leq \frac{\varepsilon}{2}\text{.}
\end{equation*}

We then estimate, for all $n\geq \max\{N,N_1\}$:
\begin{multline*}
  \left| \delta_\infty(\varpi_n(g)\varpi_n(h),k) - \delta_n(g h, \sigma_n(k))\right|\\
  \begin{split}
    &\leq  \left| \delta_\infty(\varpi_n(g)\varpi_n(h),\psi_N(k)) - \delta_n(g h, \sigma_n(k))\right| + \varepsilon_n \\
    &= \left| D(\varpi_n(g)\varpi_n(h),(k_j)_{j\in\N}) - \delta_n(g h, \sigma_n((k_j)_{j\in\N})) \right| + \varepsilon_n \\
    &\leq \limsup_{j\rightarrow\infty} \left|\delta_j(\varsigma_n^j(g)\varsigma_n^j(h),k_j) - \delta_n(g h, k_n)\right| + \varepsilon_n \\
    &\leq \limsup_{j\rightarrow\infty} \left| \delta_j(\varsigma_n^j(g)\varsigma_n^j(h),k_j) - \delta_n(g h, \varkappa_j^n(k_j)) \right| \\
    &\quad + \limsup_{j\rightarrow\infty} \left|\delta_n(g h, \varkappa_j^n(k_j)) - \delta_n(g h, k_n)\right| + \varepsilon_n \\
    &\leq \limsup_{j\rightarrow\infty} \left| \delta_j(\varsigma_n^j(g)\varsigma_n^j(h),k_j) - \delta_n(g h, \varkappa_j^n(k_j)) \right| \\
    &\quad + \limsup_{j\rightarrow\infty} \delta_n(k_n, \varkappa_j^n(k_j)) + \varepsilon_n \\
    &\leq \sum_{j=n}^\infty \varepsilon_j + \limsup_{j\rightarrow\infty} \delta_n(k_n, \varkappa_j^n(k_j)) + \varepsilon_n \\
    &\leq \frac{\varepsilon}{4} + \frac{\varepsilon}{2} + \frac{\varepsilon}{4}  = \varepsilon \text{.}
  \end{split}
\end{multline*}

Now, let $g,h \in \overline{G_\infty}\left[\frac{1}{\varepsilon}\right]$ and write $(g_n)_{n\in\N} = \psi_N(g)$ and $(h_n)_{n\in\N} = \psi_N(h)$. If $k \in G_N\left[\frac{1}{\varepsilon}\right]$ then:
\begin{multline*}
  \left| \delta_N(\sigma_N(g)\sigma_N(h), k)  - \delta_\infty(g h, \varpi_N(k)) \right| \\
  \begin{split}
    &\leq \limsup_{n\rightarrow\infty} \left| \delta_N(g_N h_N, k)  - \delta_n(g_n h_n, \varsigma_N^n(k)) \right|\\
    &\leq  \limsup_{n\rightarrow\infty} \left| \delta_N(g_N h_N, k)  - \delta_N(\varkappa_n^N(g_n)\varkappa_n^N(h_n), k) \right|\\
    &\quad + \limsup_{n\rightarrow\infty} \left| \delta_N(\varkappa_n^N(g_n)\varkappa_n^N(h_N), k) - \delta_n(g_n h_n, \varsigma_N^n(k)) \right| \\
    &\leq   \limsup_{n\rightarrow\infty} \left| \delta_N(g_N h_N, \varkappa_n^N(g_n)\varkappa_n^N(h_n)) \right|\\
    &\quad + \limsup_{n\rightarrow\infty} \left| \delta_N(\varkappa_n^N(g_n)\varkappa_n^N(h_n), k) - \delta_n(g_n h_n, \varsigma_N^n(k)) \right| \\
    &\leq   \limsup_{n\rightarrow\infty} \left| \delta_n(\varsigma_N^n(g_N h_N), g_n h_n)) \right| + 2\sum_{j=N}^n\varepsilon_j \leq \varepsilon \text{.}\\
  \end{split}
\end{multline*}

Thus $(\varpi_N,\sigma_N) \in \UIso{\varepsilon}{(G_N,\delta_N)}{(\overline{G_\infty},\delta_\infty)}{\frac{1}{\varepsilon}}$. In particular, if $n\geq N$ then:
\begin{equation*}
  \Upsilon((G_N,\delta_N),(\overline{G_\infty},\delta_\infty)) \leq \varepsilon \text{,}
\end{equation*}
and our proof is concluded.
 \end{proof}

We emphasize that a priori, a Cauchy sequence of proper groups for $\Upsilon$ which meets the assumptions of Theorem (\ref{Upsilon-completeness-thm}) will indeed converge to a proper monoid, but maybe not to a group. In order to assure that the limit is indeed a group, a new assumption must be added to Theorem (\ref{Upsilon-completeness-thm}). In turn, this new assumption implies regularity for all sequences. We begin with relating inverse and local almost isometric isomorphism.
\begin{lemma}\label{uiso-inverse-lemma}
  Let $(G,\delta_G)$ and $(H,\delta_H)$ be two metric groups. Let $\varepsilon > 0$. If:
  \begin{equation*}
    (\varsigma,\varkappa) \in \UIso{\varepsilon}{(G,\delta_G)}{(H,\delta_H)}{\frac{1}{\varepsilon}}
  \end{equation*}
  then for all $g \in G\left[\frac{1}{\varepsilon}\right]$ such that $g^{-1} \in G\left[\frac{1}{\varepsilon}\right]$, the following estimate holds:
  \begin{equation*}
    \delta_H\left(\varsigma(g)^{-1},\varsigma\left(g^{-1}\right)\right) \leq \varepsilon \text{.}
  \end{equation*}
\end{lemma}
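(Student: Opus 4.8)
The plan is to exploit the left-invariance of $\delta_H$ to convert the claimed estimate into a statement about the distance from the identity $e_H$, and then to read that statement off from a single, well-chosen instance of the defining inequality of an $r$-local $\varepsilon$-almost isometric isomorphism with $r = \frac{1}{\varepsilon}$. Adopting the index convention $G_1 = G$, $G_2 = H$, $\varsigma_1 = \varsigma$ and $\varsigma_2 = \varkappa$, the identity-preservation clause of Definition \ref{almost-iso-def} reads $\varsigma(e_G) = e_H$ and $\varkappa(e_H) = e_G$. First I would observe that, by left-invariance of $\delta_H$,
\[
\delta_H\left(\varsigma(g)^{-1}, \varsigma\left(g^{-1}\right)\right) = \delta_H\left(\varsigma(g)\varsigma(g)^{-1}, \varsigma(g)\varsigma\left(g^{-1}\right)\right) = \delta_H\left(e_H, \varsigma(g)\varsigma\left(g^{-1}\right)\right).
\]
Thus it suffices to bound $\delta_H\left(\varsigma(g)\varsigma\left(g^{-1}\right), e_H\right)$ by $\varepsilon$.

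Next I would apply the almost-isometry inequality of Definition \ref{almost-iso-def}, in the direction $j = 1$, $k = 2$, to the pair $g$ and $g' := g^{-1}$ together with the target point $h := e_H$. The hypotheses are met: $g$ and $g^{-1}$ both lie in $G\left[\frac{1}{\varepsilon}\right]$ by assumption, while $e_H \in H\left[\frac{1}{\varepsilon}\right]$ trivially, since $\delta_H(e_H, e_H) = 0$. This yields
\[
\left|\delta_H\left(\varsigma(g)\varsigma\left(g^{-1}\right), e_H\right) - \delta_G\left(g g^{-1}, \varkappa(e_H)\right)\right| \leq \varepsilon.
\]

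Finally I would simplify the second distance: since $g g^{-1} = e_G$ and $\varkappa(e_H) = e_G$, we have $\delta_G\left(g g^{-1}, \varkappa(e_H)\right) = \delta_G(e_G, e_G) = 0$, so the inequality collapses to $\delta_H\left(\varsigma(g)\varsigma\left(g^{-1}\right), e_H\right) \leq \varepsilon$, which combined with the reduction above proves the lemma. I do not expect a genuine obstacle here: the argument is a one-shot specialization of the defining inequality, and the only points requiring care are the bookkeeping of the index convention (one must invoke the inequality in the direction $G \to H$ so that $\varkappa$, and not $\varsigma$, is the map applied to $h$) and the verification of the ball-membership constraints, both of which are immediate. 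The essential insight is simply the choice of substitution $g' = g^{-1}$, $h = e_H$, which makes the right-hand distance vanish.
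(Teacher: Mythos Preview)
Your proof is correct and follows essentially the same approach as the paper: both use left-invariance of $\delta_H$ to reduce to bounding $\delta_H(e_H,\varsigma(g)\varsigma(g^{-1}))$, and then apply the defining almost-isometry inequality with the specialization $g' = g^{-1}$, $h = e_H$ (using $\varkappa(e_H) = e_G$) so that the $\delta_G$-term vanishes. Your write-up is slightly more explicit about the bookkeeping, but the argument is identical.
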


\begin{proof}
  We compute:
  \begin{align*}
    \delta_H\left(\varsigma(g)^{-1},\varsigma\left(g^{-1}\right)\right) 
    &= \delta_H\left(\varsigma(g) \varsigma(g)^{-1}, \varsigma(g)\varsigma\left(g^{-1}\right)\right)  \\
    &= \delta_H\left(e_H, \varsigma(g)\varsigma\left(g^{-1}\right)\right)\\
    &\leq \varepsilon + \delta_G(e_G,g g^{-1}) = \varepsilon \text{,}
  \end{align*}
  as desired.
\end{proof}

We thus get our result for convergence of Cauchy sequence of proper groups for $\Upsilon$.
\begin{corollary}\label{Upsilon-completeness-cor}
  If $(G_n,\delta_n)_{n\in\N}$ is a Cauchy sequence of proper \emph{groups} for $\Upsilon$ such that for all $\varepsilon > 0$ there exists $\omega > 0$ and $N\in \N$ such that for all $n\geq N$:
\begin{equation*}
  \forall g,h \in G_n \quad \delta_n(g,h) < \omega \implies \delta_n\left(g^{-1},h^{-1}\right) < \varepsilon\text{,}
\end{equation*}
then there exists a proper \emph{group} $(G,\delta)$ such that $\lim_{n\rightarrow\infty} \Upsilon((G_n,\delta_n),(G,\delta)) = 0$.
\end{corollary}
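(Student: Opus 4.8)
The plan is to reduce to the hypotheses of Theorem \ref{Upsilon-completeness-thm}, apply it to obtain a proper monoid as the limit, and then promote that monoid to a group; the inversion-equicontinuity hypothesis is used twice, once to check the hypotheses of that theorem and once to construct inverses in the limit. First, since $(G_n,\delta_n)_{n\in\N}$ is Cauchy for $\Upsilon$, I would pass to a subsequence with summable consecutive $\Upsilon$-distances, so that for each $n$ there is $(\varsigma_n,\varkappa_n)\in\UIso{\varepsilon_n}{(G_n,\delta_n)}{(G_{n+1},\delta_{n+1})}{\frac{1}{\varepsilon_n}}$ with $\sum_n\varepsilon_n<\infty$; since a Cauchy sequence with a convergent subsequence converges to the same limit, it suffices to treat this subsequence.

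The key observation is that the hypothesis forces \emph{every} bounded sequence to be regular in the sense of Definition \ref{regular-sequence-def}, which supplies hypothesis (2) of Theorem \ref{Upsilon-completeness-thm}. Indeed, fix $\varepsilon>0$: applying the hypothesis with target $\varepsilon$ gives $\omega_2>0$, and applying it again with target $\omega_2$ gives $\omega_1>0$, each valid for all $n$ beyond some index. Then for any $g\in G_n$ and any $h,k\in G_n$ with $\delta_n(h,k)<\omega_1$, left invariance yields
\[ \delta_n\left((hg)^{-1},(kg)^{-1}\right) = \delta_n\left(g^{-1}h^{-1},g^{-1}k^{-1}\right) = \delta_n\left(h^{-1},k^{-1}\right) < \omega_2, \]
and a second application of the hypothesis to the pair $(hg)^{-1},(kg)^{-1}$ (whose inverses are $hg,kg$) gives $\delta_n(hg,kg)<\varepsilon$. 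This estimate is uniform in $g$, so every bounded sequence is regular; in particular each $\varpi_N(g)$ is bounded by Lemma \ref{almost-isoiso-lemma}(2) and hence regular. Theorem \ref{Upsilon-completeness-thm} then produces a proper monoid $(\overline{G_\infty},\delta_\infty)$ with $\Upsilon((G_n,\delta_n),(\overline{G_\infty},\delta_\infty))\to 0$, in the notation of its proof.

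It remains to upgrade $\overline{G_\infty}$ to a group. I would first show that $G_\infty = H_\infty/\!\sim$ is a group by exhibiting inverses at the level of representatives: given $(g_n)_{n\in\N}\in H_\infty$, the sequence $(g_n^{-1})_{n\in\N}$ is bounded, since $\delta_n(g_n^{-1},e_n)=\delta_n(g_n,e_n)$ by left invariance, and is regular by the previous paragraph, and I claim it lies in $H_\infty$. For this, fixing $\varepsilon>0$, inversion-equicontinuity converts the coherence estimate ``$\delta_j(g_j,\varsigma_n^j(g_n))$ small'' (available because $(g_n)_n\in H_\infty$) into ``$\delta_j(g_j^{-1},(\varsigma_n^j(g_n))^{-1})$ small'', while Lemma \ref{uiso-inverse-lemma} applied to $(\varsigma_n^j,\varkappa_j^n)$ bounds $\delta_j\left((\varsigma_n^j(g_n))^{-1},\varsigma_n^j(g_n^{-1})\right)\leq\sum_{k=n}^{j-1}\varepsilon_k$ once $n$ is large enough that the bounded elements $g_n,g_n^{-1}$ lie in $G_n\!\left[1/\sum_{k=n}^{j-1}\varepsilon_k\right]$; a triangle inequality then yields the coherence of $(g_n^{-1})_n$, so $(g_n^{-1})_n\in H_\infty$. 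Since the induced multiplication sends $[(g_n)_n]\,[(g_n^{-1})_n]$ to $[(g_ng_n^{-1})_n]=[(e_n)_n]$, the identity (and symmetrically on the other side), $G_\infty$ is a group. Finally, inversion-equicontinuity descends to $\delta_\infty$: if $\delta_\infty(x,y)<\omega$ then $\delta_n(g_n,h_n)<\omega$ eventually, whence $\delta_\infty(x^{-1},y^{-1})=\limsup_n\delta_n(g_n^{-1},h_n^{-1})\leq\varepsilon$, so inversion is uniformly continuous on $G_\infty$ and extends to the completion $\overline{G_\infty}$; joint continuity of the already-constructed multiplication shows this extension is a two-sided inverse. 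Hence $(\overline{G_\infty},\delta_\infty)$ is a proper metric group, as desired.

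The main obstacle I anticipate is the verification that $(g_n^{-1})_n$ remains in $H_\infty$: this is where the almost-isometric errors $\sum_{k=n}^{j-1}\varepsilon_k$, the ball constraints required by Lemma \ref{uiso-inverse-lemma}, and the inversion-equicontinuity thresholds must be reconciled simultaneously, and where the boundedness of $(g_n^{-1})_n$ is essential so that the relevant elements fall inside the radius $1/\sum_{k=n}^{j-1}\varepsilon_k$ for large $n$. By contrast, the regularity implication, although it is the crucial new conceptual ingredient, is a short computation, and the passage from $G_\infty$ to its completion is routine given the uniform continuity of inversion.
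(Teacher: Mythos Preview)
Your proposal is correct and follows essentially the same route as the paper's proof: the two-step application of the inversion-equicontinuity hypothesis to obtain $\mathcal{R}((G_n,\delta_n)_{n\in\N}) = \prod_{n\in\N} G_n$, the invocation of Theorem \ref{Upsilon-completeness-thm} on a summable subsequence, the use of Lemma \ref{uiso-inverse-lemma} together with inversion-equicontinuity to show $(g_n^{-1})_{n\in\N}\in H_\infty$, and the extension of inversion to $\overline{G_\infty}$ by uniform continuity are exactly the steps the paper carries out. Your observation that well-definedness of the inverse on $G_\infty$ follows automatically from uniqueness of two-sided inverses in a monoid is a slight streamlining of the paper's explicit verification that equivalent sequences have equivalent inverse sequences.
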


\begin{proof}
First, let $(g_n)_{n\in\N} \in \prod_{n\in\N} G_n$ and let $\varepsilon > 0$. By our assumption, there exists $N\in\N$ and $\omega > 0$ such that if $n\geq N$, if $h,k \in G_n$ and $\delta_n(h,k) < \omega$ then $\delta_n(h^{-1},k^{-1}) < \varepsilon$. Now, there exists $N_1\in\N$ and $\eta > 0$ such that for all $n\geq N_1$ and if $h,k \in G_n$ with $\delta_n(h,k) < \eta$ then $\delta_n(h^{-1},k^{-1}) < \omega$.

Thus for $n\geq \max\{ N, N_1 \}$ and for all $h,k \in G_n$ with $\delta_n(h,k) < \eta$, we conclude $ \delta_n(g_n^{-1} h^{-1}, g_n^{-1} k^{-1}) = \delta_n(h^{-1},k^{-1}) < \omega$. Hence:
\begin{equation*}
  \delta_n(h g_n, k g_n) = \delta_n( (g_n^{-1} h^{-1})^{-1}, (g_n^{-1} k^{-1})^{-1} ) < \varepsilon \text{.}
\end{equation*} 
So $(g_n^{-1})_{n\in\N}$ is regular. We thus conclude that $\mathcal{R}((G_n,\delta_n)_{n\in\N}) = \prod_{n\in\N} G_n$.

Since $(G_n,\delta_n)_{n\in\N}$ is Cauchy, up to extracting a subsequence, we can choose $(\varepsilon_n)_{n\in\N}$ such that $\sum_{n=0}^\infty \varepsilon_n < \infty$ and $\Upsilon((G_n,\delta_n),(G_{n+1},\delta_{n+1})) < \varepsilon_n$. Let:
\begin{equation*}
  (\varsigma_n,\varkappa_n) \in \UIso{\varepsilon_n}{(G_n,\delta_n)}{(G_{n+1},\delta_{n+1})}{\frac{1}{\varepsilon_n}} \text{.}
\end{equation*}
We will use the notations and observations of the proof of Theorem (\ref{Upsilon-completeness-thm}).

Let now $(g_n)_{n\in\N} \in H_\infty$. First, note that the left invariance of the metric $\delta_n$ for all $n\in\N$, the sequence $(g_n^{-1})_{n\in\N}$ is bounded. Let $\varepsilon > 0$. There exists $N\in\N_1$ and $\omega > 0$ such that if $n\geq N_1$ and $h,k \in G_n$ with $\delta_n(h,k) < \omega$ then $\delta_n(h^{-1},k^{-1}) < \frac{\varepsilon}{2}$. Now, there exists $N_2 \in \N$ such that if $n\geq N_2$ and $j > n$ then $\delta_j(g_j,\varsigma_n^j(g_n)) < \omega$, and thus $\delta_j(g_j^{-1}, \varsigma_n^{j}(g_n)^{-1}) < \frac{\varepsilon}{2}$. There exists $N_3 \in \N$ such that $\sum_{n=N_3}^\infty\varepsilon_n < \frac{\varepsilon}{2}$ and $g_n, g_n^{-1} \in G_n\left[ \frac{1}{\sum_{j=N_3}^\infty \varepsilon_j} \right]$. Thus, for all $n\geq\max\{N_1,N_2,N_3\}$ and $j > n$, we have, using Lemma (\ref{uiso-inverse-lemma}):
\begin{equation*}
  \delta_j(g_j^{-1},\varsigma_n^j(g_n^{-1})) \leq \delta_j(g_j^{-1},\varsigma_n^j(g_n)^{-1}) + \delta_j(\varsigma_n^j(g_n)^{-1},\varsigma_n^j(g_n^{-1}))  < \varepsilon\text{.}
\end{equation*}
Hence, $(g_n^{-1})_{n\in\N} \in H_\infty$. It is now sufficient to observe, using the notations of the proof of Theorem (\ref{Upsilon-completeness-thm}), that for any $(g_n)_{n\in\N}$ and $(h_n)_{n\in\N}$ chosen in $H_\infty$ and $(g_n)_{n\in\N}\sim (h_n)_{n\in\N}$ then, for all $\varepsilon > 0$, there exists $\omega > 0$ and $N\in\N$ such that if $n\geq N$ and $\delta_n(g,h) < \omega$ for any $g,h \in G_n$ then $\delta_n(g^{-1},h^{-1})<\varepsilon$; since there exists $N_1\in\N$ such that $\delta_n(g_n,h_n) < \omega)$ for all $n\geq N_1$, we conclude that $\delta_n(g_n^{-1},h_n^{-1}) < \varepsilon$. Hence $(g_n^{-1})_{n\in\N}$ and $(h_n^{-1})_{n\in\N}$ are equivalent and thus, we can define the inverse map on $G_\infty$ by setting $g^{-1} = q((g_n^{-1})_{n\in\N})$ for any $(g_n)_{n\in\N} \in q^{-1}(g)$. It is immediate that this map is the inverse on the monoid $G_\infty$ for the multiplication of $G_\infty$, turning $G_\infty$ into a group.

Now, let $\varepsilon > 0$, and let $N\in\N$ and $\omega > 0$ given as above. If $\delta_\infty(g,h) < \omega$ for some $g=q((g_n)_{n\in\N})$ and $h = q((h_n)_{n\in\N})$ then there exists $N_1 \geq N$ such that $\delta_n(g_n,h_n) < \omega$ and therefore, $\delta_n(g_n^{-1},h_n^{-1}) < \varepsilon$, so $\delta_\infty(g^{-1},h^{-1})<\varepsilon$. Thus the inverse map is uniformly continuous on $G_\infty$ and can be extended to $\overline{G_\infty}$ on which it is now easy to check, it is the inverse for the multiplication of $\overline{G_\infty}$, hence turning $\overline{G_\infty}$ into a topological group.
\end{proof}

Of course, the multiplicative group $(0,\infty)$ does not have a uniformly continuous inverse, so the assumption of Corollary (\ref{Upsilon-completeness-cor}) is strong, though  not unreasonable, and it is useful in controlling the regularity condition of Theorem (\ref{Upsilon-completeness-thm}).  We now discuss some other natural conditions under which the regularity condition is controllable. The easiest situation is given as follows.

\begin{corollary}
The metric $\Upsilon$ restricted to the class of proper monoids with bi-invariant metric is complete. Moreover, $\Upsilon$ restricted to the class  of proper groups with bi-invariant metric is also complete.
\end{corollary}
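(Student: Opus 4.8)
The plan is to obtain both claims as immediate specializations of Theorem (\ref{Upsilon-completeness-thm}) and Corollary (\ref{Upsilon-completeness-cor}): the whole point will be that bi-invariance renders every auxiliary hypothesis of those results automatic. So I would start from a Cauchy sequence $(G_n,\delta_n)_{n\in\N}$ of proper monoids with bi-invariant metrics, pass to a subsequence carrying almost isometric isomorphisms $(\varsigma_n,\varkappa_n) \in \UIso{\varepsilon_n}{(G_n,\delta_n)}{(G_{n+1},\delta_{n+1})}{\frac{1}{\varepsilon_n}}$ with $\sum_{n=0}^\infty \varepsilon_n < \infty$ as in the discussion preceding Theorem (\ref{Upsilon-completeness-thm}), and recall that convergence of this subsequence forces convergence of the original Cauchy sequence.

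The first step is the key observation that right-invariance gives $\delta_n(h g_n, k g_n) = \delta_n(h,k)$ for all $h, k, g_n \in G_n$, so that Definition (\ref{regular-sequence-def}) is met with $\omega = \varepsilon$ and $N = 0$ by \emph{every} bounded sequence; that is, $\mathcal{R}((G_n,\delta_n)_{n\in\N}) = \prod_{n\in\N} G_n$. Consequently hypothesis (2) of Theorem (\ref{Upsilon-completeness-thm}) holds for free: for each $N$ and each admissible $g$, the sequence $\varpi_N(g)$ is bounded, its terms lying in $G_n\left[\frac{1}{\sum_{k=N}^\infty\varepsilon_k} + \sum_{k=N}^{n-1}\varepsilon_k\right]$ by Assertion (2) of Lemma (\ref{almost-isoiso-lemma}), hence regular. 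Theorem (\ref{Upsilon-completeness-thm}) then hands me a proper monoid $(\overline{G_\infty},\delta_\infty)$ to which the subsequence converges for $\Upsilon$.

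Next I would verify that the limit remains in the class, namely that $\delta_\infty$ is bi-invariant. Left-invariance is already proved inside Theorem (\ref{Upsilon-completeness-thm}), and right-invariance will follow from the identical computation, since on $G_\infty$ one has $\delta_\infty(gk, g'k) = \limsup_{n\to\infty}\delta_n(g_n k_n, g'_n k_n) = \limsup_{n\to\infty}\delta_n(g_n, g'_n) = \delta_\infty(g,g')$ by right-invariance of each $\delta_n$, and this identity extends to the completion $\overline{G_\infty}$ by continuity of the multiplication. This completes the monoid statement.

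For the group statement I would only need to feed Corollary (\ref{Upsilon-completeness-cor}) its inverse-equicontinuity hypothesis, and this too is automatic: in a group with bi-invariant metric, inversion is an isometry, as combining left-invariance by $g$ with right-invariance by $h$ yields $\delta(g^{-1}, h^{-1}) = \delta(e, g h^{-1}) = \delta(h, g)$. So the hypothesis holds with $\omega = \varepsilon$ and $N = 0$, Corollary (\ref{Upsilon-completeness-cor}) promotes the limit monoid to a proper group, and its metric is bi-invariant by the previous paragraph. The hard part, such as it is, will be purely bookkeeping: confirming that bi-invariance genuinely descends to $\delta_\infty$ and survives the passage to $\overline{G_\infty}$; everything else is a direct reading-off of the general theorems once bi-invariance is exploited.
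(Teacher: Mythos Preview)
Your proposal is correct and follows essentially the same approach as the paper: reduce to Theorem (\ref{Upsilon-completeness-thm}) by noting that right-invariance forces $\mathcal{R}((G_n,\delta_n)_{n\in\N}) = \prod_{n\in\N} G_n$, and for groups invoke Corollary (\ref{Upsilon-completeness-cor}) via the observation that bi-invariance makes inversion an isometry. You go slightly further than the paper by verifying that $\delta_\infty$ is itself bi-invariant, so that the limit remains in the class; the paper's proof omits this check, though it is needed for completeness \emph{within} the class as stated.
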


\begin{proof}
Let $(G_n,\delta_n)_{n\in\\N}$ be a Cauchy sequence for $\Upsilon$ such that for all $n\in\N$, the metric $\delta_n$ is bi-invariant. There exists a subsequence $(G_{j(n)},\delta_{j(n)})_{n\in\N}$ of $(G_n,\delta_n)_{n\in\N}$ such that $\sum_{n=0}^\infty\Upsilon((G_{j(n)},\delta_{j(n)}),(G_{j(n+1)},\delta_{j(n+1)})) < \infty$. We immediately check that $\mathcal{R}((G_{j(n)},\delta_{j(n)})_{n\in\N}) = \prod_{n\in\N} G_{j(n)}$, so we can apply Theorem (\ref{Upsilon-completeness-thm}) to conclude that $(G_{j(n)},\delta_{j(n)})_{n\in\N}$ converges for $\Upsilon$, and thus, as a Cauchy sequence with a convergent subsequence, so does $(G_n,\delta_n)_{n\in\N}$. This proves that $\Upsilon$ restricts to a complete metric on the class of proper monoids with bi-invariant metrics.

Note last that if $(G,\delta)$ is a proper group with $\delta$ bi-invariant, then for all $g,h \in G$ we have $\delta(g^{-1},h^{-1}) = \delta(g g^{-1}, g h^{-1}) = \delta(h, g h^{-1} h) = \delta(h,g)$ so the inverse map is an isometry, hence Corollary (\ref{Upsilon-completeness-cor}) applies.
\end{proof}

Another situation where the regularity condition in Theorem (\ref{Upsilon-completeness-thm}) can be handled, in principle, is when the right translations are Lipschitz. We just need to control the Lipschitz constant, rather than a whole modulus of continuity, so we can define the following:
\begin{equation*}
\Upsilon_\ast(G,H) = \inf\left\{\varepsilon>0\middle\vert\begin{array}{l}
                                                           \exists (\varsigma,\varkappa)\in \UIso{\varepsilon}{G}{H}{\frac{1}{\varepsilon}} \\
\sup_{g\in G} \left|\dil{h\in H\mapsto hg}-\dil{h\in H\mapsto h\varsigma(g)}\right| < \varepsilon\\
\sup_{h\in H} \left|\dil{g\in G\mapsto gh}-\dil{g\in G\mapsto g\varkappa(h)}\right| < \varepsilon
\end{array}\right\}
\end{equation*}
where $\dil{f}$ is meant as the best Lipschitz constant for a function $f$ between metric spaces. Now, convergence for $\Upsilon_\ast$ implies in particular that we can find almost isometric isomorphism which will meet our regularity condition in Theorem (\ref{Upsilon-completeness-thm}).

\section{Cauchy Sequences for the Covariant Propinquity}

We now study the problem of convergence of Cauchy sequences for the covariant propinquity. We begin with the following corollary of \cite[Theorem 2.13]{Latremoliere17c}, which extends \cite[Theorem 3.10]{Latremoliere17c} to the proper setting we are now working within. This result encapsulates some of the covariance property of the propinquity itself. We will use our work in \cite{Latremoliere13b} and \cite{Latremoliere17c} and in particular, we recall what a target set and a forward target set is.

Let $(\A,\Lip_\A)$ and $(\B,\Lip_\B)$ be two {\qcms s}. Let $\tau = (\D,\Lip_\D,\pi_\A,\pi_\B)$ be a tunnel from $(\A,\Lip_\A)$ to $(\B,\Lip_\B)$. For any $a\in \dom{\Lip_\A}$ and $l\geq\Lip_\A(a)$, the \emph{$l$-target set of $a$} is defined by:
\begin{equation*}
  \targetsettunnel{\tau}{a}{l} = \left\{ \pi_\B(d) \middle\vert d\in \sa{\D}, \Lip_\D(d) \leq l, \pi_\A(d) = a \right\} \text{.} 
\end{equation*}
Now, if $\tau = (\D,\Lip_\D,\pi_\A,\pi_\B,\varsigma,\varkappa)$ is a covariant tunnel, then for all $a\in\sa{\A}$ and $l \geq \Lip_\A(a)$, by a mild abuse of notations, we write $\targetsettunnel{\tau}{a}{l}$ for $\targetsettunnel{\tau'}{a}{l}$ where $\tau' = (\D,\Lip_\D,\pi_\A,\pi_\B)$.

Moreover, we denote $(\D,\Lip_\D,\pi_\B,\pi_\A,\varkappa,\varsigma)$ as $\tau^{-1}$. 

Now, by \cite[Corollary 4.5]{Latremoliere13b},\cite[Proposition 2.12]{Latremoliere14}, if $a, a' \in \dom{\Lip_\A}$ and $l\geq \max\{\Lip_\A(a),\Lip_\A(a')\}$, and if $b\in\targetsettunnel{\tau}{a}{l}$ and $b' \in \targetsettunnel{\tau}{a'}{l}$ then:
\begin{equation*}
  \norm{b - b'}{\B} \leq \norm{a - a'}{\A} + 2 l \tunnelextent{\tau} \text{.}
\end{equation*}

Let now $\varphi : \B \rightarrow \B$ be a Lipschitz linear map. Let $\tau$ be a tunnel from $(\A,\Lip_\A)$ to $(\B,\Lip_\B)$. For $a \in \dom{\Lip_\A}$ and $l \geq \Lip_\A(a)$, the \emph{$l$-image-target set} of $a$ is defined by:
\begin{equation*}
\targetsetimage{\tau,\varphi}{a}{l} = \varphi\left(\targetsettunnel{\tau}{a}{l}\right)
\end{equation*}
and the \emph{$(l,D)$-forward-target set} of $a$, for $D\geq\max\{ 1, \dil{\varphi} \}$ is defined by:
\begin{equation*}
\targetsetforward{\tau,\varphi}{a}{l,D} = \targetsettunnel{\tau^{-1}}{\varphi\left(\targetsettunnel{\tau}{a}{l}\right)}{D l} \text{.}
\end{equation*}

Now, by \cite[Lemma 2.5]{Latremoliere17c}, if $a,a' \in \sa{\A}$ and $l\geq\max\{\Lip_\A(a),\Lip_\A(a')\}$, and if $f \in \targetsetforward{\tau,\varphi}{a}{l,D}$ and $f' \in \targetsetforward{\tau}{a'}{l,D}$ then:
\begin{equation*}
  \norm{f-f'}{\A} \leq D \left( \norm{a-a'}{\A} + 8 l \tunnelextent{\tau} \right) \text{.}
\end{equation*}
As before, if $\tau$ is a covariant tunnel, we write $\targetsetforward{\tau,\varphi}{\cdot}{\cdot}$ for the forward target set associated to the underlying tunnel of $\tau$.

We now recall and mildly extend a metric introduced in \cite{Latremoliere16b}.

\begin{theorem}[{\cite{Latremoliere16b}}]
  Let $(\A,\Lip_\A)$ be a {\qcms} and let $\B$ be a unital C*-algebra. If for any two unital linear maps $\alpha$, $\beta$ from $\A$ to $\B$, we set:
  \begin{equation*}
    \KantorovichDist{\Lip_\A}{\alpha}{\beta} = \sup\left\{ \norm{\alpha(a) - \beta(a)}{\B} : a\in\dom{\Lip}, \Lip(a) \leq 1 \right\} \text{,}
  \end{equation*}
  then $\mathrm{mkD}_{\Lip_\A}$ is a distance on the space $\mathcal{B}_1(\A,\B)$ of unit preserving bounded linear maps, which, on any norm-bounded subset, metrizes the initial topology induced by the family of seminorms:
  \begin{equation*}
    \left\{ \alpha \in \mathcal{B}_1 \mapsto \norm{\alpha(a)}{\B} : a \in \A  \right\} \text{.}
  \end{equation*}
\end{theorem}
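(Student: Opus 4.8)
The plan is to check the metric axioms and then, on a fixed norm-bounded set $S\subseteq\mathcal{B}_1(\A,\B)$, to compare the topology of $\KantorovichDist{\Lip_\A}{\cdot}{\cdot}$ with the point-norm topology. Symmetry, positivity and the triangle inequality for $\KantorovichDist{\Lip_\A}{\cdot}{\cdot}$ are immediate, since it is a supremum of the quantities $\norm{\alpha(a)-\beta(a)}{\B}$ and $\norm{\cdot}{\B}$ is a norm. Finiteness uses the {\qcms} structure: as $\Kantorovich{\Lip_\A}$ metrizes the weak* topology on the weak*-compact set $\StateSpace(\A)$, the diameter $\Delta=\diam{\StateSpace(\A)}{\Kantorovich{\Lip_\A}}$ is finite, so for self-adjoint $a$ with $\Lip_\A(a)\leq 1$ and any fixed $\mu\in\StateSpace(\A)$ we have $\norm{a-\mu(a)\unit_\A}{\A}=\sup_{\varphi\in\StateSpace(\A)}|\varphi(a)-\mu(a)|\leq\Delta$; since $\alpha,\beta$ are unital, $(\alpha-\beta)(a)=(\alpha-\beta)(a-\mu(a)\unit_\A)$, and thus $\KantorovichDist{\Lip_\A}{\alpha}{\beta}\leq(\norm{\alpha}{\mathrm{op}}+\norm{\beta}{\mathrm{op}})\Delta<\infty$. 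For the separation axiom, $\KantorovichDist{\Lip_\A}{\alpha}{\beta}=0$ forces $\alpha$ and $\beta$ to agree on $\{a\in\dom{\Lip_\A}:\Lip_\A(a)\leq 1\}$, hence on all of $\dom{\Lip_\A}$ by homogeneity of $\Lip_\A$ together with unitality on $\R\unit_\A$; since $\dom{\Lip_\A}$ is dense in $\sa{\A}$ and $\alpha,\beta$ are continuous they agree on $\sa{\A}$, and then on $\A=\sa{\A}+i\sa{\A}$ by linearity.

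Now fix $S$ with $\norm{\gamma}{\mathrm{op}}\leq M$ for all $\gamma\in S$. The point-norm topology is exactly the locally convex topology of the seminorms $\gamma\mapsto\norm{\gamma(a)}{\B}$, so a net converges, $\alpha_i\to\alpha$, if and only if $\norm{(\alpha_i-\alpha)(a)}{\B}\to 0$ for every $a\in\A$. One inclusion is soft: if $\KantorovichDist{\Lip_\A}{\alpha_i}{\alpha}\to 0$, then for $a\in\dom{\Lip_\A}$ with $\Lip_\A(a)>0$ homogeneity gives $\norm{(\alpha_i-\alpha)(a)}{\B}\leq\Lip_\A(a)\,\KantorovichDist{\Lip_\A}{\alpha_i}{\alpha}\to 0$, while on $\R\unit_\A$ the difference vanishes; an arbitrary $a\in\sa{\A}$ is handled by approximating it in norm by some $b\in\dom{\Lip_\A}$ and using $\norm{(\alpha_i-\alpha)(a)}{\B}\leq 2M\norm{a-b}{\A}+\norm{(\alpha_i-\alpha)(b)}{\B}$, and then $\A=\sa{\A}+i\sa{\A}$ finishes it.

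The reverse inclusion is the crux and is where the {\qcms} hypothesis is genuinely needed. First, unitality gives $\KantorovichDist{\Lip_\A}{\alpha}{\beta}=\sup\{\norm{(\alpha-\beta)(a)}{\B}:a\in\mathcal{L}\}$, where $\mathcal{L}=\{a\in\dom{\Lip_\A}:\Lip_\A(a)\leq 1,\ \mu(a)=0\}$, because replacing $a$ by $a-\mu(a)\unit_\A$ changes neither the constraint nor $(\alpha-\beta)(a)$. By Rieffel's theorem, condition (2) of Definition \ref{qcms-def} --- that $\Kantorovich{\Lip_\A}$ metrizes the weak* topology on the compact $\StateSpace(\A)$ --- is equivalent to $\mathcal{L}$ being \emph{totally bounded} for $\norm{\cdot}{\A}$. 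Given $\varepsilon>0$, choose a finite $\varepsilon$-net $a_1,\dots,a_n\in\mathcal{L}$; then for a net $\alpha_i\to\alpha$ in the point-norm topology and any $a\in\mathcal{L}$, picking $a_k$ with $\norm{a-a_k}{\A}<\varepsilon$ yields $\norm{(\alpha_i-\alpha)(a)}{\B}\leq 2M\varepsilon+\max_{1\leq k\leq n}\norm{(\alpha_i-\alpha)(a_k)}{\B}$. Taking the supremum over $a\in\mathcal{L}$ and then $\limsup_i$, the finite maximum tends to $0$, so $\limsup_i\KantorovichDist{\Lip_\A}{\alpha_i}{\alpha}\leq 2M\varepsilon$; as $\varepsilon$ is arbitrary, $\KantorovichDist{\Lip_\A}{\alpha_i}{\alpha}\to 0$. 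The main obstacle is precisely this uniformity step: pointwise convergence over the infinite Lipschitz ball must be upgraded to uniform convergence, which is possible only through the total boundedness of $\mathcal{L}$ --- the content of the {\qcms} axiom --- while the bound $M$ on $S$ is what lets a finite net control the remainder. The remaining verifications are routine.
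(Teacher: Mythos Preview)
Your proof is correct and follows essentially the same approach as the paper: the forward implication uses density of $\dom{\Lip_\A}$ in $\sa{\A}$ together with the uniform norm bound, and the reverse implication reduces the supremum to the set $\mathcal{L}=\{a:\Lip_\A(a)\leq 1,\ \mu(a)=0\}$ and exploits its total boundedness (Rieffel's criterion) via a finite $\varepsilon$-net. Your version is slightly more thorough in two respects---you verify the metric axioms (finiteness and separation) explicitly, which the paper omits, and you argue with nets rather than sequences---but the core ideas are identical.
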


\begin{proof}
  Let $(\alpha_n)_{n\in\N}$ be a sequence of unit preserving linear maps converging to some unital linear map $\alpha_\infty$ for $\KantorovichDist{\Lip_\A}{}{}$, and for which there exists some $B > 0$ such that for all $n\in\N\cup\{\infty\}$, we have $\opnorm{\alpha_n}{\A}{\B} \leq B$, where $\opnorm{\cdot}{\A}{\B}$ is the operator norm for linear maps from $\A$ to $\B$. Let $a\in \sa{\A}$ and $\varepsilon > 0$. Since $\dom{\Lip_\A}$ is dense in $\sa{\A}$, there exists $a' \in \dom{\Lip_\A}$ such that $\norm{a-a'}{\A} < \frac{\varepsilon}{3 B}$. By definition, there exists $N\in\N$ such that for all $n\geq N$, we have $\KantorovichDist{\Lip_\A}{\alpha_n}{\alpha_\infty} < \frac{\varepsilon}{3(\Lip_\A(a')+1)}$. Thus:
  \begin{equation*}
    \norm{\alpha_\infty(a)-\alpha_n(a)}{\B} \leq \norm{\alpha_n(a - a')}{\B} + \norm{\alpha_n(a')-\alpha_\infty(a')}{\B} + \norm{\alpha_\infty(a-a')}{\B} < \varepsilon \text{.}
  \end{equation*}
  Thus for all $a\in\sa{\A}$, the sequence $(\alpha_n(a))_{n\in\N}$ converges to $\alpha_\infty(a)$. By linearity, we then conclude $(\alpha_n(a))_{n\in\N}$ converge to $\alpha_\infty(a)$ for $\norm{\cdot}{\B}$.

  Conversely, assume that for all $a\in\A$, the sequence $(\alpha_n(a))_{n\in\N}$ converges  to $\alpha_\infty(a)$ in $\B$, and again assume that there exists $B > 0$ such that for all $n\in\N\cup\{\infty\}$, we have $\opnorm{\alpha_n}{\A}{\B} \leq B$. Let $\varepsilon > 0$ and fix $\mu \in \StateSpace(\A)$. As $\Lip_\A$ is a L-seminorm, $L = \{a\in\sa{\A}:\Lip_\A(a)\leq 1,\mu(a) = 0\}$ is totally bounded. Thus, there exists a finite $\frac{\varepsilon}{3 B}$-dense set $F \subseteq L$ of $L$. As $F$ is finite, by assumption, there exists $N\in\N$ such that for all $n\geq N$ and all $a\in F$, we have $\norm{\alpha_n(a)-\alpha_\infty(a)}{\B} < \frac{\varepsilon}{3}$. If $n\geq N$ and $a\in \sa{\A}$ such that $\Lip_\A(a)\leq 1$ then there exists $a' \in F$ such that $\norm{a-\mu(a)\unit_\A - a'}{\A} < \frac{\varepsilon}{3 B}$, and thus:
\begin{align*}
  \norm{\alpha_n(a)-\alpha_\infty(a)}{\B} 
  &\leq \norm{\alpha_n(a-\mu(a)\unit_\A) - \alpha_\infty(a-\mu(a)\unit_\A)}{\B} \\
  &\leq \norm{\alpha_n(a-\mu(a)\unit_\A - a')}{\B} \\
  &\quad + \norm{\alpha_n(a')-\alpha_\infty(a')}{\B} + \norm{\alpha_\infty(a-\mu(a)\unit_\A)-a'}{\B} \\
  &\leq B\frac{\varepsilon}{3B} + \frac{\varepsilon}{3} + B\frac{\varepsilon}{3B} < \varepsilon \text{.}
\end{align*}
Thus for $n\geq N$, we have $\KantorovichDist{\Lip_\A}{\alpha_n}{\alpha_\infty} < \varepsilon$.
\end{proof}

We now can prove:

\begin{theorem}\label{compactness-thm}
  Let $F$ be a permissible function. Let $(\A_n,\Lip_n,G_n,\delta_n,\alpha_n)_{n\in\N}$ be a sequence of Lipschitz dynamical $F$-systems such that:
  \begin{enumerate}
    \item $(\A_n,\Lip_n)_{n\in\N}$ converges to some {$F$-\qcms s} $(\A_\infty,\Lip_\infty)$ for $\dpropinquity{F}$,
    \item $(G_n,\delta_n)_{n\in\N}$ converges to a proper monoid $(G_\infty,\delta_\infty)$ for $\Upsilon$,
     \item there exists a locally bounded function $D : [0,\infty)\rightarrow [1,\infty)$ such that for all $n\in\N$ and for all $g \in G_n$, we have $\dil{\alpha_n^g}\leq D(\delta_n(e_n,g))$,
    \item for all $\varepsilon > 0$ there exists $\omega > 0$ and $N\in\N$ such that for all $n\geq N$, if $g,h \in G_n$ and $\delta_n(g,h) < \omega$ then:
      \begin{equation*}
        \KantorovichDist{\Lip_n}{\alpha_n^g}{\alpha_n^h} < \varepsilon \text{,}
      \end{equation*}
  \end{enumerate}
  then there exist:
  \begin{itemize}
  \item a strongly continuous action $\alpha_\infty$ of $G_\infty$ on $\A_\infty$ such that $(\A_\infty,\Lip_\infty,G_\infty,\delta_\infty,\allowbreak \alpha_\infty)$ is a Lipschitz dynamical $F$-system, 
  \item for all $n\in\N$, an almost isometry $(\varsigma_n,\varkappa_n) \in \UIso{\varepsilon_n}{(G_n,\delta_n)}{(G,\delta)}{\frac{1}{\varepsilon_n}}$ such that $\lim_{n\rightarrow\infty}\varepsilon_n = 0$,
  \item a strictly increasing sequence $j : \N \rightarrow \N$,
  \item for each $n\in\N$, a tunnel $\tau_n$ from $(\A_\infty,\Lip_\infty)$ to $(\A_n,\Lip_n)$, with \begin{equation*}
      \lim_{n\rightarrow\infty} \tunnelextent{\tau_n} = 0\text{,}
\end{equation*}
  \end{itemize}
    such that for all $a\in\dom{\Lip_\infty}$ and $g \in G_\infty$, with $l\geq \Lip_\infty(a)$ and $K \geq D(\delta_\infty(e,g))$:
    \begin{equation}\label{targetsetforward-cv-eq}
      \lim_{n\rightarrow\infty} \Haus{\norm{\cdot}{\A_\infty}}\left( \targetsetforward{\tau_{j(n)},\alpha_{j(n)}^{\varkappa_{j(n)}(g)}}{a}{l,K} , \{\alpha_\infty^g(a)\} \right) = 0 \text{.}
    \end{equation}

    In particular, for all $g\in G_\infty$, we have $\dil{\alpha_\infty^g}\leq D(\delta_\infty(e,g))$. Furthermore, for all $\varepsilon > 0$ there exist $\omega > 0$ and $N\in\N$ such that if $n\in\N\cup\{\infty\}$ with $n\geq N$, if $g,h \in G_n$ and if $\delta_n(g,h) < \omega$ then $\KantorovichDist{\Lip_n}{\alpha_n^g}{\alpha_n^h}<\varepsilon$.

    If moreover:
\begin{enumerate}
\item for all $n\in\N$, the action $\alpha_n$ is by Lipschitz morphisms, then $\alpha_\infty$ is also an action by Lipschitz morphisms,
\item for all $n\in\N$, the action $\alpha_n$ is by Lipschitz automorphisms, then $\alpha_\infty$ is also an action by Lipschitz automorphisms,
\item for all $n\in\N$, $G_n$ is a group and the action $\alpha_n$ is by full quantum isometries, then $\alpha_\infty$ is also an action by full quantum isometries,
\item for all $n\in\N$, $G_n$ is a compact group and $\alpha_n$ is an ergodic action by full quantum isometries, then $\alpha_\infty$ is also an ergodic action.
\end{enumerate}
\end{theorem}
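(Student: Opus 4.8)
The plan is to build $\alpha_\infty$ by transporting each $\alpha_n$ across tunnels that realize the convergence $(\A_n,\Lip_n)\to(\A_\infty,\Lip_\infty)$, and to force convergence of the transported elements through a compactness argument powered by the diameter estimate quoted above for forward target sets. Concretely, from hypothesis (1) I would fix tunnels $\tau_n$ from $(\A_\infty,\Lip_\infty)$ to $(\A_n,\Lip_n)$ with $\tunnelextent{\tau_n}\to 0$, and from hypothesis (2) almost isometries $(\varsigma_n,\varkappa_n)\in\UIso{\varepsilon_n}{(G_n,\delta_n)}{(G_\infty,\delta_\infty)}{\frac{1}{\varepsilon_n}}$ with $\varepsilon_n\to 0$, so that $\varkappa_n(g)\in G_n$ for $g$ in a ball exhausting $G_\infty$. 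By Lemma (\ref{almost-isoiso-lemma})(2) we have $\delta_n(e_n,\varkappa_n(g))\leq\delta_\infty(e,g)+\varepsilon_n$, so by hypothesis (3) and local boundedness of $D$ the dilations $\dil{\alpha_n^{\varkappa_n(g)}}$ stay below some $K$ close to $D(\delta_\infty(e,g))$ for large $n$, which makes the forward target set $\targetsetforward{\tau_n,\alpha_n^{\varkappa_n(g)}}{a}{l,K}\subseteq\A_\infty$ well defined.

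To define $\alpha_\infty^g$, fix $a\in\dom{\Lip_\infty}$, $l\geq\Lip_\infty(a)$ and $g$. The quoted estimate from \cite[Lemma 2.5]{Latremoliere17c} with $a=a'$ shows this forward target set has norm-diameter at most $8Kl\,\tunnelextent{\tau_n}\to 0$, while comparison against the forward target set of $0$ (which contains $0$) keeps it norm-bounded. Crucially, every $f$ in it satisfies $\Lip_\infty(f)\leq Kl$, being the image under a quantum isometry of an element of a $\Lip_\D$-ball; hence these sets all lie in a fixed norm-totally-bounded subset of $\sa{\A_\infty}$ by axiom (2) of Definition (\ref{qcms-def}). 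Choosing countable dense $\mathcal{D}\subseteq\dom{\Lip_\infty}$ and $\Gamma\subseteq G_\infty$ and diagonalizing, I extract a strictly increasing $j$ along which, for every $a\in\mathcal{D}$ and $g\in\Gamma$, any selection from these target sets converges; the limit is independent of the selection since the diameters vanish, and I set $\alpha_\infty^g(a)$ to be it. The same estimate with general $a,a'$ gives $\norm{\alpha_\infty^g(a)-\alpha_\infty^g(a')}{\A_\infty}\leq K\norm{a-a'}{\A_\infty}$, so $\alpha_\infty^g$ extends to a bounded linear map on $\A_\infty$; lower semicontinuity of $\Lip_\infty$ together with $\Lip_\infty(f)\leq Kl$ yields, after optimizing $l\downarrow\Lip_\infty(a)$ and $K\downarrow D(\delta_\infty(e,g))$, the bound $\dil{\alpha_\infty^g}\leq D(\delta_\infty(e,g))$. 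Positivity and unitality pass to the norm-limit, and (\ref{targetsetforward-cv-eq}) is precisely the convergence just established, now extended from $\mathcal{D},\Gamma$ to all $a,g$ by the continuity bound and density.

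That $\alpha_\infty$ is a monoid action I would derive from the near-multiplicativity of $\varkappa_n$: Lemma (\ref{almost-isoiso-lemma})(4) gives $\delta_n(\varkappa_n(g)\varkappa_n(h),\varkappa_n(gh))\leq 2\varepsilon_n$, so hypothesis (4) forces $\KantorovichDist{\Lip_n}{\alpha_n^{\varkappa_n(g)\varkappa_n(h)}}{\alpha_n^{\varkappa_n(gh)}}\to 0$; since $\alpha_n^{\varkappa_n(g)\varkappa_n(h)}=\alpha_n^{\varkappa_n(g)}\circ\alpha_n^{\varkappa_n(h)}$, transporting through $\tau_n$ yields $\alpha_\infty^{gh}=\alpha_\infty^g\circ\alpha_\infty^h$, with the unit sent to the identity. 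The equicontinuity claim at $n=\infty$ follows by feeding $\delta_\infty(g,h)<\omega$ into Lemma (\ref{almost-isoiso-lemma})(5) to get $\delta_n(\varkappa_n(g),\varkappa_n(h))<\omega+2\varepsilon_n$, applying hypothesis (4), and transporting; strong continuity is then read off from equicontinuity and the dilation bound. The four structural transfers amount to passing closed conditions to the limit: multiplicativity of Lipschitz morphisms from $\alpha_n^{\varkappa_n(g)}(ab)=\alpha_n^{\varkappa_n(g)}(a)\alpha_n^{\varkappa_n(g)}(b)$ and norm-convergence on products; invertibility for automorphisms and full quantum isometries via $\alpha_\infty^{g^{-1}}$ together with Lemma (\ref{uiso-inverse-lemma}) and the $D\equiv 1$ specialization of the dilation bound applied to both $g$ and $g^{-1}$; and preservation of the fixed-point space $\R\unit_{\A_\infty}$ for ergodicity.

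The decisive point is the convergence in the second step: the forward target sets must not merely shrink but converge to a genuine element of $\A_\infty$, simultaneously for all $a$ and $g$. This is where the uniform $\Lip_\infty$-bound on target elements, the total boundedness of L-balls, and the diagonal extraction must combine; packaging these into well-defined, linear, equivariant maps — rather than unrelated limits for unrelated elements — is the heart of the argument, with the almost-multiplicativity-plus-equicontinuity passage to the action identity a close second.
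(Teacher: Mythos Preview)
Your proposal is correct and follows the same overall strategy as the paper: transport each $\alpha_n^{\varkappa_n(g)}$ through tunnels via forward target sets, use the shrinking-diameter estimate together with the $\Lip_\infty$-bound on target elements and total boundedness of $\Lip_\infty$-balls to force convergence along a diagonal subsequence, then extend by uniform continuity in both the $a$-variable and the $g$-variable, and finally read off the action identity from the near-multiplicativity of $\varkappa_n$ combined with hypothesis (4).

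The one genuine difference is packaging. The paper does not carry out the diagonalization by hand: instead it observes that the countable submonoid $H\subseteq G_\infty$ generated by a countable dense set is a semigroupoid over a single object, and applies \cite[Theorem 2.13]{Latremoliere17c} as a black box to obtain at once the action of $H$ on $\A_\infty$, the subsequence $j$, the tunnels, the forward-target-set convergence (\ref{targetsetforward-cv-eq}) for $g\in H$, and the structural transfers (positive linear, endomorphism, automorphism). Your version essentially reproves that theorem inline, which is more self-contained but longer, and obliges you to check linearity and the composition identity $\alpha_\infty^g\circ\alpha_\infty^h=\alpha_\infty^{gh}$ directly from target-set inclusions --- things the cited theorem already encapsulates. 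The paper's choice of a dense sub\emph{monoid} $H$ rather than a mere dense set $\Gamma$ also means the action identity is available on $H$ before any extension, whereas you first extend to $G_\infty$ by continuity and only then verify it; both orders are valid, but the paper's is a little cleaner. The remainder of the argument (extending (\ref{targetsetforward-cv-eq}) from $H$ to all of $G_\infty$, the equicontinuity at $n=\infty$, and the ergodic/full-quantum-isometry transfers via \cite[Theorem 3.14]{Latremoliere17c}) is handled in the paper exactly as you outline.
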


\begin{proof}
For all $n\in\N$, let $\varepsilon_n = \Upsilon((G_n,\delta_n),(G_\infty,\delta_\infty)) + \frac{1}{n+1}$ and:
\begin{equation*}
  (\varsigma_n,\varkappa_n) \in \UIso{\varepsilon_n}{(G_n,\delta_n)}{(G_\infty,\delta_\infty)}{\frac{1}{\varepsilon_n}}\text{.}
\end{equation*}

Since $G_\infty$ is a proper metric space, it is separable. Let $E$ be a countable dense subset of $G_\infty$ containing the identity element $e$ of $G_\infty$. Let $H$ be the sub-monoid generated by $E$. Since $H$ consists of all the \emph{finite} products of elements of the countable set $E$, it is itself countable. 

Let $\varepsilon > 0$. By assumption, there exists $\omega > 0$ and $N\in\N$ such that if $n\geq N$ and $h,h' \in G_n$ with $\delta_n(h,h') < \omega$ then $\KantorovichDist{\Lip_n}{\alpha_n^h}{\alpha_n^{h'}} < \varepsilon$. Now, fix $g,g' \in H$. There exists $N_1\in \N$ such that if $n\geq N_1$ then $\delta_n(\varkappa_n(g)\varkappa_n(g'),\varkappa_n(gg')) < \omega$ by Assertion (4) of Lemma (\ref{almost-isoiso-lemma}) since $\lim_{n\rightarrow\infty}\varepsilon_n = 0$ and $\lim_{n\rightarrow\infty}\frac{1}{\varepsilon_n} = \infty$.

Let $n\in \N$ with $n\geq\max\{N,N_1\}$. Let $a\in\dom{\Lip_n}$ with $\Lip_n(a)\leq 1$. We now compute:
\begin{align*}
    \left\| \alpha_n^{\varkappa_n(g)}\circ\alpha_n^{\varkappa_n(g')}(a) -  \alpha_n^{\varkappa_n(g g')}(a) \right\|_{\A_n}    &= \left\| \alpha_n^{\varkappa_n(g) \varkappa_n(g')}(a) -  \alpha_n^{\varkappa_n(g g')}(a) \right\|_{\A_n}  \\
    &\leq \KantorovichDist{\Lip_n}{\alpha_n^{\varkappa_n(g)\varkappa_n(g')}}{\alpha_n^{\varkappa_n(gg')}} < \varepsilon \text{.}
\end{align*}

We also record that $\alpha_n^{e_n}$, for $e_n \in G_n$ the identity element of $G_n$, is the identity map.

As a monoid, $H$ is trivially a semigroupoid over the set of a single object, which we take as the identity element of $H$. The domain and codomain maps $c$ and $d$  from $H$ to $\{e\}$ are obviously constant, and the multiplication on $H$ is the composition operation on the semigroupoid $(H,\{e\},c,d,\cdot)$. Thus we fit the hypothesis of \cite[Theorem 2.13]{Latremoliere17c}. Therefore, there exists an action $\alpha_\infty$ of $H$ on $\A$, a strictly increasing sequence $j:\N\rightarrow\N$ and for each $n\in\N$, a tunnel $\tau_n$ from $(\A_\infty,\Lip_\infty)$ to $(\A_{n},\Lip_{n})$ such that $\lim_{n\rightarrow\infty}\tunnelextent{\tau_n} = 0$ and all $a\in\sa{\A_\infty}$ with $\Lip_\infty(a) < \infty$, for all $l \geq \Lip_\infty(a)$, and for all $g \in H$ and $K\geq D(\delta_\infty(e,g))$:
\begin{equation*}
  \lim_{n\rightarrow\infty} \Haus{\norm{\cdot}{\A_\infty}}\left( \targetsetforward{n,g}{a}{l,K} , \{\alpha_\infty^g(a)\} \right) = 0 \text{,}
\end{equation*}
where we use the notation $\targetsetforward{n,g}{\cdot}{\cdot}$ for $\targetsetforward{\tau_{j(n)},\alpha_{j(n)}^{\varkappa_{j(n)}(g)}}{\cdot}{\cdot}$.

Note that by \cite[Theorem 2.13]{Latremoliere17c}, for all $g\in H$, the linear map  $\alpha_\infty^g$ is defined on $\A_\infty$, is unital and positive, and moreover it is a unital *-endomorphism if $\alpha_n$ are actions by unital *-endomorphisms for all $n\in\N$, and even a *-automorphism if $\alpha_n$ is an action by *-automorphisms for all $n\in\N$. Moreover, $\Lip_\infty(\alpha_\infty^g(a))\leq D(\delta_\infty(e,g)) \Lip_\infty(a)$ for all $g\in H$ and $a\in\dom{\Lip_\infty}$, and as a positive unital linear map, $\alpha_\infty^g$ has norm $1$ as a map from $\A_\infty$ to $\A_\infty$ for all $g \in H$.

Let $a\in \dom{\Lip_\infty}$, $l\geq\Lip_\infty(a)$, $l > 0$. Let $\varepsilon > 0$. There exists $\omega > 0$ and $N\in\N$ such that for all $n\geq N$, if $h,h' \in G_n$ and $\delta_n(h,h') < \omega$ then $\KantorovichDist{\Lip_n}{\alpha_n^{h}}{\alpha_n^{h'}} < \frac{\varepsilon}{4 l}$. Let $g,h \in H$ such that $\delta(g,h) < \frac{\omega}{2}$.

Let $N_1 \in \N$ such that for all $n\geq N_1$, we have $\delta_{j(n)}(\varkappa_{j(n)}(g),\varkappa_{j(n)}(h)) < \delta(g,h) + \frac{\omega}{2} = \omega$ by Lemma (\ref{almost-isoiso-lemma}).

Since $D$ is locally bounded, and since:
\begin{equation*}
  \lim_{n\rightarrow\infty}\delta_{j(n)}(e_{j(n)},\varkappa_{j(n)}(g)) = \lim_{n\rightarrow\infty}\delta_{j(n)}(e_{j(n)},\varkappa_{j(n)}(h)) = 0\text{, }
\end{equation*}
there exists $N_2 \in \N$ and $K > 0$ such that for all $n\geq N_2$ we have $D(\delta_{j(n)}(e_{j(n)},\varkappa_{j(n)}(g))) \leq K$ and $D(\delta_{j(n)}(e_{j(n)},\varkappa_{j(n)}(g))) \leq K$.

Let $N_3\in\N$ such that for all $n\geq N_3$, we have:
\begin{align*}
  \Haus{\norm{\cdot}{\A_\infty}}\left(\{\alpha_\infty^g(a)\}, \targetsetforward{n,g}{a}{l,K}\right) &< \frac{\varepsilon}{4}\\ 
\intertext{and }
\Haus{\norm{\cdot}{\A_\infty}}\left(\{\alpha_\infty^h(a)\}, \targetsetforward{n,h}{a}{l,K}\right) &< \frac{\varepsilon}{4}\text{.}
\end{align*}

Let $N_4 \in \N$ such that for all $n\geq N_4$, we have $\tunnelextent{\tau_{j(n)}} < \frac{\varepsilon}{8 K l}$.

Let $n\geq \max\{N,N_1,N_2,N_3,N_4\}$. Now let $a_n \in \targetsettunnel{\tau_{j(n)}}{a}{l}$. Let $b_n = \alpha_{j(n)}^{\varkappa_{j(n)}(g)}(a_n)$ and $c_n = \alpha_{j(n)}^{\varkappa_{j(n)}(h)}(a_n)$. Let $d_n \in \targetsettunnel{\tau_{j(n)}^{-1}}{b_n}{K l}$ and $f_n \in \targetsettunnel{\tau_{j(n)}^{-1}}{c_n}{K l}$. We then have:
\begin{align*}
  \norm{\alpha_\infty^g(a) - \alpha_\infty^h(a)}{\A_\infty} &\leq \norm{\alpha_\infty^g(a) - d_n}{\A_\infty} + \norm{d_n - f_n}{\A_\infty} + \norm{f_n - \alpha_\infty^h(a)}{\A_\infty}\\
                                       &\leq \frac{\varepsilon}{4} + \norm{b_n - c_n}{\A_{j(n)}} + 2 K l \tunnelextent{\tau_{j(n)}} + \frac{\varepsilon}{4} \\
                                       &\leq \frac{\varepsilon}{2} + \norm{\alpha_{j(n)}^{\varkappa_{j(n)}(g)}(a_n) - \alpha_{j(n)}^{\varkappa_{j(n)}(h)}(a_n)}{\A_{j(n)}} + 2 K l \tunnelextent{\tau_{j(n)}} \\
                                       &\leq  \frac{3\varepsilon}{4} + l \KantorovichDist{\Lip_{j(n)}}{\alpha_{j(n)}^{\varkappa_{j(n)}(g)} }{\alpha_{j(n)}^{\varkappa_{j(n)}(h)}}\\
                                       &< \varepsilon \text{.}
\end{align*}
Hence, $g \in H \mapsto \alpha_\infty^g(a)$ is uniformly continuous over a dense subset $H$ of the complete space $(G_\infty,\delta_\infty)$. Hence, it admits a unique uniformly continuous extension to $G_\infty$, but we are going to prove a little more. By assumption, for all $g\in H$, we have $\opnorm{\alpha_\infty^g}{}{\A_\infty} \leq 1$ where $\opnorm{\cdot}{}{\A_\infty}$ is the operator norm for a linear map on $\A_\infty$. Let $a \in \sa{\A_\infty}$ and $\varepsilon > 0$. There exists $a'\in\dom{\Lip_\infty}$ with $\norm{a-a'}{\A_\infty} < \frac{\varepsilon}{3}$ (using the density of $\dom{\Lip_\infty}$). Since $g \in H \mapsto \alpha_\infty^g(a')$ is uniformly continuous, there exists $\omega > 0$ such that if $g,h \in H$ and $\delta_\infty(g,h) < \omega$ then $\norm{\alpha_\infty^g(a') - \alpha_\infty^h(a')}{\A_\infty} < \frac{\varepsilon}{3}$.  Therefore: 
\begin{multline*}
  \norm{\alpha_\infty^g(a) - \alpha_\infty^h(a)}{\A_\infty} \\
  \leq \norm{\alpha_\infty^g(a) - \alpha_\infty^g(a')}{\A_\infty} + \norm{\alpha_\infty^g(a') - \alpha_\infty^h(a')}{\A_\infty} \\
  + \norm{\alpha_\infty^h(a') - \alpha_\infty^h(a)}{\A_\infty} \leq \varepsilon \text{.}
\end{multline*}
Hence for all $a\in\sa{\A_\infty}$, the map $g \in H \mapsto \alpha_\infty^g(a)$ is uniformly continuous on $H$, which is dense in the complete metric space $(G_\infty,\delta_\infty)$. Hence, it admits a unique equicontinuous extension to $\sa{\A_\infty}$, which we still denote by $\alpha_\infty^g(a)$. 

Moreover, $\alpha_\infty^g$ is a unital positive $\R$-linear map (hence, of norm $1$) and if for all $n\in\N$, the action $\alpha_n$ is by Lipschitz endomorphism, then $\alpha_\infty^g$ is a Jordan-Lie morphism for all $g \in G_\infty$.

Another consequence of this observation is that $g \in G_\infty \mapsto \dil{\alpha_\infty^g}$ is locally bounded. Indeed, let $g \in G_\infty$. There exists $\omega>0$ and $M > 0$ such that if $\delta_\infty(g,h)<\omega$ then $D(\delta_\infty(e,h)) \leq M$. Assume now $\delta_\infty(g,h) < \frac{\omega}{2}$. There exists a sequence $(h_n)_{n\in\N}$ in $H$ converging to $h$ --- we may as well assume that $\delta_\infty(h,h_n) \leq \frac{\omega}{2}$, and thus $\delta_\infty(g,h_n) < \omega$ for all $n\in\N$. We conclude, since $\Lip_\infty$ is lower semi-continuous, that:
\begin{equation*}
\Lip_\infty(\alpha_\infty^h(a)) \leq \liminf_{n\rightarrow\infty} \Lip_\infty(\alpha_\infty^{h_n}(a)) \leq M \Lip_\infty(a)
\end{equation*}
for all $a\in \dom{\Lip_\infty}$. Hence, $g \in G_\infty\mapsto \dil{\alpha_\infty^g}$ is locally bounded as well (though not a priori using the function $D$).

Setting, $\alpha_\infty^g(a) = \alpha_\infty^g\left(\frac{a+a^\ast}{2}\right) + i \alpha_\infty^g\left(\frac{a-a^\ast}{2i}\right)$ for $a\in A_\infty$ and $g \in G_\infty$, we check that $\alpha_\infty^g$ is a positive unital linear map (hence still of norm $1$) on $\A_\infty$, which is a unital *-endomorphism of $\A_\infty$ if $\alpha_\infty^g$ is a Jordan-Lie morphism, as seen for instance in Claim (5.18) of the proof of \cite[Theorem 5.13]{Latremoliere13}.

It is also easy to check that $g\in G_\infty \mapsto \alpha_\infty^g$ is a monoid action on $\A_\infty$ since $g \in H \mapsto \alpha_\infty^g$ is, and since the multiplication on $G_\infty$ is assumed continuous. By construction, the action $\alpha_\infty$ is strongly continuous on $\sa{\A_\infty}$ and hence on $\A_\infty$ by an immediate computation. If $G_\infty$ is a group, since $\alpha_\infty^e$ is the identity, we then conclude that the action $\alpha_\infty$ of $G_\infty$ is an action by invertible Lipschitz linear maps by \cite[Theorem 2.13]{Latremoliere17c}.

Now, we show that Expression (\ref{targetsetforward-cv-eq}) holds for all $g \in G_\infty$ rather than all $g\in H$. Let $a\in \dom{\Lip_\infty}$ and $l>\Lip_\infty(a)$. Let $g \in G_\infty$. Since both $D$ and $g\in G_\infty \mapsto \dil{\alpha_\infty^g}$  are locally bounded, there exists $\omega_1 > 0$ and $M > 0$ such that if $\delta_\infty(g,h) < \omega_1$ then $\max\{ \dil{\alpha_\infty^h}, D(\delta_\infty(e,h)) \} \leq M$.

Let $\varepsilon > 0$. By uniform continuity of $h\in G_\infty\mapsto\alpha_\infty^h(a)$, there exists $\omega > 0$ such that if $h,h' \in G$ and $\delta_n(h,h') < \omega$ then $\norm{\alpha_\infty^{h}(a) - \alpha_\infty^{h'}(a)}{\A_\infty} < \frac{\varepsilon}{4}$.  By density of $H$, there exists $h \in H$ such that $\delta_\infty(g,h) < \min\{\omega,\omega_1\}$.

Now, there exists $N_1\in\N$ such that for all $n\geq N_1$, we have:
\begin{equation*}
  \Haus{\norm{\cdot}{\A_\infty}}\left(\{\alpha_\infty^h(a)\}, \targetsetforward{n,h}{a}{l,M} \right) < \frac{\varepsilon}{4} \text{.}
\end{equation*}

Moreover, there exists $N_2 \in \N$ such that if $n\geq N_2$ then $\tunnelextent{\tau_{n}}\leq \frac{\varepsilon}{8 l M}$, and $N_3 \in \N$ such that if $n\geq N_3$ then $\delta_{j(n)}(\varkappa_{j(n)}(g),\varkappa_{j(n)}(h)) < \min\{\omega,\omega_1\}$.

Let $n\geq\max\{N_1,N_2,N_3\}$.

Let $b_n\in\targetsetforward{n,g}{a}{M,l}$. There exists $a_n \in \targetsettunnel{\tau_{n}}{a}{l}$ such that:
\begin{equation*}
  b_n \in \targetsettunnel{\tau_{n}^{-1}}{\alpha_{j(n)}^{\varkappa_{j(n)}(g)}(a_n)}{M l} \text{.}
\end{equation*}
Let $c_n \in \targetsettunnel{\tau_{n}^{-1}}{\alpha_{j(n)}^{\varkappa_{j(n)}(h)}(a_n)}{M l}$. Note that $c_n \in \targetsetforward{n,h}{a}{l,M}$. We now estimate:
\begin{align*}
  \norm{\alpha_\infty^g(a) - b_n}{\A_\infty} &\leq \norm{\alpha_\infty^g(a) - \alpha_\infty^h(a)}{\A_\infty} + \norm{\alpha_\infty^h(a) - c_n}{\A_\infty} + \norm{c_n - b_n}{\A_\infty} \\
  &\leq \frac{\varepsilon}{4} + \frac{\varepsilon}{4} + \norm{\alpha_{j(n)}^{\varkappa_{j(n)}(h)}(a_n) - \alpha_{j(n)}^{\varkappa_{j(n)}(g)}(a_n)}{\A_{j(n)}} + 2 M l \tunnelextent{\tau_{j(n)}}\\
  &< \varepsilon \text{.}
\end{align*}
Hence:
\begin{equation*}
  \lim_{n\rightarrow\infty} \Haus{\norm{\cdot}{\A_\infty}}\left(\{\alpha_\infty^g(a)\}, \targetsetforward{n,g}{a}{l,M}\right) = 0 \text{,}
\end{equation*}
as desired.

Now, Expression (\ref{targetsetforward-cv-eq}) holds for any $D\geq D(\delta_\infty(e,g))$ --- since all forward target sets $\targetsetforward{n,g}{a}{l,L}$ have diameter converging to $0$ and are not empty for $L \geq D(\delta_\infty(e,g))$, and $\targetsetforward{n,g}{a}{l,L}\subseteq \targetsetforward{n,g}{a}{l,L'}$ with $L'\geq L\geq D(\delta_\infty(e,g))$ --- so we may apply, for instance, \cite[Claim 6.13]{Latremoliere16c}.

In turn, this proves that $\dil{\alpha_\infty^g}\leq D(\delta_\infty(e,g))$ by lower semi-continuity of $\Lip_\infty$.

We make one last observation. Let $\varepsilon > 0$. There exist $\omega > 0$ and $N\in\N$ such that if $n\geq N$, $g,h \in G_n$ and $\delta_n(g,h) < \omega$ then $\KantorovichDist{\Lip_n}{\alpha_n^g}{\alpha_n^h} < \varepsilon$. For all $g,h \in H$ with $\delta_\infty(g,h) < \omega$, then there exists $N_1\in\N$ with $N_1\geq N$ such that if $n\geq N_1$ then $\delta_{j(n)}(\varkappa_{j(n)}(g),\varkappa_{j(n)}(h)) < \omega$ since $\left|\delta_{j(n)}(\varkappa_{j(n)}(g),\varkappa_{j(n)}(h)) - \delta(g,h)\right|\xrightarrow{n\rightarrow\infty} 0$.  By \cite[Theorem 2.13]{Latremoliere17c}, we conclude $\KantorovichDist{\Lip_\infty}{\alpha_\infty^g}{\alpha_\infty^h} < \varepsilon$.

Let now $g,h \in G_\infty$ with $\delta_\infty(g,h) < \omega$. By density of $H$, there exists two sequences $(g_n)_{n\in\N}$, $(h_n)_{n\in\N} \in H$ such that $\lim_{n\rightarrow\infty}g_n = g$ and $\lim_{n\rightarrow\infty}h_n = h$. Let $a\in\dom{\Lip_\infty}$ with $\Lip_\infty(a)\leq 1$. We have:
\begin{align*}
  \norm{\alpha_\infty^g(a)-\alpha_\infty^h(a)}{\A_\infty} 
  = \lim_{n\rightarrow\infty} \norm{\alpha_\infty^{g_n}(a) - \alpha_\infty^{h_n}(a)}{\A_\infty}
  \leq \varepsilon \text{.}
\end{align*}

This concludes our proof, as the remaining arguments regarding full quantum isometries and ergodicity follows as in  \cite[Theorem 3.14]{Latremoliere17c}.
\end{proof}

We now can state a sufficient condition for a certain kind of compactness.

\begin{theorem}\label{completeness-thm}
  Let $(\A,\Lip)$ be an {$F$-{\qcms}} and $(G,\delta)$ be a proper monoid. Let $(\A_n,\Lip_n,G_n,\delta_n,\alpha_n)_{n\in\N}$ be a sequence of Lipschitz dynamical systems and let $D : [0,\infty) \rightarrow [1,\infty)$ be a locally bounded function such that:
  \begin{enumerate}
    \item for all $n\in\N$ and $g\in G_n$, we have $\dil{\alpha_n^g} \leq D(\delta_n(e_n,g))$,
    \item $\lim_{n\rightarrow\infty} \Upsilon((G_n,\delta_n),(G,\delta)) = 0$ ,
    \item $\lim_{n\rightarrow\infty} \dpropinquity{}((\A_n,\Lip_n),(\A,\Lip)) = 0$,
    \item for all $\varepsilon>0$, there exists $\omega > 0$ and $N\in\N$ such that if $n\geq N$ and if $g,h \in G_n$ with $\delta_n(g,h) < \omega$, then $\KantorovichDist{\Lip_n}{\alpha_n^g}{\alpha_n^h}{} < \varepsilon$,
  \end{enumerate}
  then there exists a strictly increasing function $j : \N \rightarrow \N$ and a Lipschitz dynamical system $(\A,\Lip,G,\delta,\alpha)$ such that:
  \begin{equation*}
    \covpropinquity{}((\A_{j(n)},\Lip_{j(n)},G_{j(n)},\delta_{j(n)},\alpha_{j(n)}),(\A,\Lip,G,\delta,\alpha)) \xrightarrow{n\rightarrow\infty} 0 \text{.}
  \end{equation*}
As the action $\alpha$ is given by Theorem (\ref{compactness-thm}), it enjoys the properties described in the conclusion of that theorem.
\end{theorem}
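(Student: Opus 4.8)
The plan is to reduce the statement to Theorem (\ref{compactness-thm}) and then to convert the data it produces into covariant tunnels whose magnitude tends to $0$. The hypotheses here are exactly those of Theorem (\ref{compactness-thm}) with $(\A,\Lip)$ in the role of $(\A_\infty,\Lip_\infty)$ and $(G,\delta)$ in the role of $(G_\infty,\delta_\infty)$, so I would first invoke it to obtain: a strongly continuous action $\alpha$ making $(\A,\Lip,G,\delta,\alpha)$ a Lipschitz dynamical $F$-system; a strictly increasing $j:\N\to\N$; almost isometries $(\varsigma_n,\varkappa_n)\in\UIso{\varepsilon_n}{(G_n,\delta_n)}{(G,\delta)}{\frac{1}{\varepsilon_n}}$ with $\varepsilon_n\to 0$; for each $n$ a tunnel $\tau_{j(n)}$ from $(\A,\Lip)$ to $(\A_{j(n)},\Lip_{j(n)})$ with $\tunnelextent{\tau_{j(n)}}\to 0$; and the forward--target convergence (\ref{targetsetforward-cv-eq}). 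Writing $\pi_n$ and $\rho_n$ for the quantum isometries of $\tau_{j(n)}$ onto $\A$ and $\A_{j(n)}$ respectively, I would assemble, for each $n$, the covariant tunnel $\Theta_n=(\D_n,\Lip_{\D_n},\rho_n,\pi_n,\varsigma_{j(n)},\varkappa_{j(n)})$ from the $j(n)$-th system to $(\A,\Lip,G,\delta,\alpha)$; this is a legitimate covariant tunnel since its monoid component is precisely the almost isometry supplied above.

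It then suffices, by Definition (\ref{covariant-propinquity-def}), to fix $\eta>0$ and show that $\tunnelmagnitude{\Theta_n}{\eta}\leq\eta$ for all large $n$; since $\eta$ is arbitrary this yields convergence to $0$. Working at fixed $\eta$ is what keeps the argument honest: the ball $G_{j(n)}\left[\frac{1}{\eta}\right]$ has fixed radius, so by local boundedness of $D$ there is a single constant $K\geq 1$ dominating $D$ on $\left[0,\frac{1}{\eta}+1\right]$, and $\Theta_n$ is an $\eta$-covariant tunnel for large $n$ by the remark following Definition (\ref{equi-tunnel-def}), because $\varepsilon_{j(n)}\to 0$. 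The extent half of the magnitude is immediate, as $\tunnelextent{\Theta_n}=\tunnelextent{\tau_{j(n)}}\to 0$. Everything therefore reduces to controlling the $\eta$-reach of Definition (\ref{reach-def}).

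For the reach I would treat the direction whose group variable ranges over $G$ directly, since it matches (\ref{targetsetforward-cv-eq}) verbatim, and the opposite direction after a preliminary adjustment. Given a state on one algebra, I would choose the companion state on the other so that the two states, composed with the respective quantum isometries, lie within $\tunnelextent{\tau_{j(n)}}$ in $\Kantorovich{\Lip_{\D_n}}$; this is possible precisely because the extent is the Hausdorff distance between the two state-space images inside $\StateSpace(\D_n)$. Testing the reach against $d\in\dom{\Lip_{\D_n}}$ with $\Lip_{\D_n}(d)\leq 1$ and setting $a=\pi_n(d)$, so that $\Lip(a)\leq 1$ and $\rho_n(d)\in\targetsettunnel{\tau_{j(n)}}{a}{1}$, I would: replace $\alpha_{j(n)}^{g}$ by $\alpha_{j(n)}^{\varkappa_{j(n)}(\varsigma_{j(n)}(g))}$ using $\delta_{j(n)}\!\left(g,\varkappa_{j(n)}\varsigma_{j(n)}(g)\right)\leq\varepsilon_{j(n)}$ from Assertion (3) of Lemma (\ref{almost-isoiso-lemma}) together with the uniform equicontinuity hypothesis (4) (this step is vacuous for the direct direction); invoke (\ref{targetsetforward-cv-eq}) to transport the acted element back to within a vanishing distance of $\alpha^{\varsigma_{j(n)}(g)}(a)$, the relevant lift $e_n$ having $\Lip_{\D_n}(e_n)\leq K$; and finally bound $\left|\varphi\circ\rho_n(e_n)-\psi\circ\pi_n(e_n)\right|$ by $K\,\tunnelextent{\tau_{j(n)}}$ through the inequality $\left|\mu(e_n)-\nu(e_n)\right|\leq\Lip_{\D_n}(e_n)\,\Kantorovich{\Lip_{\D_n}}(\mu,\nu)$ and the state matching above. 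Summing these contributions gives a bound tending to $0$.

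The main obstacle is uniformity of this estimate over all $g$ in the ball $G_{j(n)}\left[\frac{1}{\eta}\right]$, since (\ref{targetsetforward-cv-eq}) is only a pointwise-in-$g$ statement. I would resolve it by a compactness argument: the ball is compact, the forward-target sets have diameter at most $8Kl\,\tunnelextent{\tau_{j(n)}}$, uniformly in $g$, by the displayed estimate drawn from \cite[Lemma 2.5]{Latremoliere17c}, and the maps $g\mapsto\alpha^{g}(a)$ and their $\A_{j(n)}$-counterparts are equicontinuous uniformly in $n$ by the strengthened form of hypothesis (4) recorded in Theorem (\ref{compactness-thm}); covering the ball by finitely many small balls on each of which the action is nearly constant then upgrades the pointwise convergence to uniform convergence. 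Once the reach is shown to be $\leq\eta$ for large $n$, Definition (\ref{magnitude-def}) gives $\tunnelmagnitude{\Theta_n}{\eta}\leq\eta$, hence $\covpropinquity{}\big((\A_{j(n)},\Lip_{j(n)},G_{j(n)},\delta_{j(n)},\alpha_{j(n)}),(\A,\Lip,G,\delta,\alpha)\big)\leq\eta$ eventually; letting $\eta\to 0$ finishes the proof, and the listed structural properties of $\alpha$ are inherited verbatim from Theorem (\ref{compactness-thm}).
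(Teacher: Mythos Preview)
Your overall plan matches the paper's: invoke Theorem (\ref{compactness-thm}), turn each $\tau_{j(n)}$ together with $(\varsigma_{j(n)},\varkappa_{j(n)})$ into a covariant tunnel, observe that the extent is already under control, and estimate the $\eta$-reach by matching states through the extent and transporting through forward-target sets. The reduction of the ``$g\in G_{j(n)}$'' direction to the ``$g\in G$'' direction via $\delta_{j(n)}(g,\varkappa_{j(n)}\varsigma_{j(n)}(g))\leq\varepsilon_{j(n)}$ and hypothesis (4) is exactly what the paper does.

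There is, however, a real gap in your uniformity step. You write that (\ref{targetsetforward-cv-eq}) is ``only a pointwise-in-$g$ statement'' and then upgrade to uniform-in-$g$ by compactness of the ball in $G$. But (\ref{targetsetforward-cv-eq}) is pointwise in \emph{both} $a$ and $g$, and in your estimate $a=\pi_n(d)$ ranges over all of $\{a\in\sa{\A}:\Lip(a)\leq 1\}$ as $d$ ranges over the unit $\Lip_{\D_n}$-ball (since $\pi_n$ is a quantum isometry). The diameter bound $8K\,\tunnelextent{\tau_{j(n)}}$ on the forward-target sets is indeed uniform in $(a,g)$, but a small diameter does not by itself locate the set near $\{\alpha^g(a)\}$; that is precisely what (\ref{targetsetforward-cv-eq}) supplies, one pair $(a,g)$ at a time.

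The paper closes this gap with a second, parallel compactness argument on the $\A$-side: fixing $\mu\in\StateSpace(\A)$, the set $\{a\in\dom{\Lip}:\Lip(a)\leq 1,\ \mu(a)=0\}$ is totally bounded (this is exactly where the L-seminorm property is used), so one chooses a finite $\frac{\varepsilon}{32}$-dense subset $\mathcal{L}$; together with a finite $\frac{\omega}{2}$-dense subset $F\subseteq G\left[\frac{2}{\varepsilon}\right]$, one applies (\ref{targetsetforward-cv-eq}) only on the finite family $\mathcal{L}\times F$. For a general $(a,g)$ one first replaces $a$ by some $a'\in\mathcal{L}$ (propagating the error through $\norm{b-b'}{\A_n}\leq\norm{a-a'}{\A}+2\,\tunnelextent{\tau_n}$) and $g$ by some $h\in F$ (propagating the error through hypothesis (4) on both the $\A$- and $\A_n$-sides), and only then invokes the forward-target convergence. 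With that addition --- the finite net in $\A$, not only in $G$ --- your outline becomes the paper's proof.
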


\begin{proof}
  By Theorem (\ref{compactness-thm}), there exists a strongly continuous action $\alpha$ of $(G,\delta)$ on $\A$, a strictly increasing function $j : \N \rightarrow \N$ and a sequence:
\begin{equation*}
  \left(\tau_n \right)_{n\in\N} = \left(\D_n,\Lip^n,\pi_n,\rho_n\right)_{n\in\N}
\end{equation*}
of tunnels where $\tau_n$ is a tunnel from $(\A,\Lip)$ to $(\A_{n},\Lip_{n})$ for all $n\in\N$, such that $(\A,\Lip,G,\delta,\alpha)$ is a Lipschitz dynamical $F$-system and for all $a\in\dom{\Lip}$, $l\geq\Lip(a)$, and $g \in G$, $M \geq D(\delta(e,g))$:
  \begin{equation*}
    \lim_{n\rightarrow\infty} \Haus{\norm{\cdot}{\A}}\left( \left\{ \alpha^g(a) \right\}, \targetsetforward{\tau_{j(n)},\alpha_{j(n)}^{\varkappa_{j(n)}(g)}}{a}{l,M} \right) = 0 \text{.}
  \end{equation*}

By replacing the original sequences of tunnels, Lipschitz dynamical systems, and almost isometries by their subsequence indexed by $j$, we will dispense with writing $j$ in the rest of this proof. We also write $\targetsetforward{n,g}{\cdot}{\cdot} = \targetsetforward{\tau_n,\alpha_n^{\varkappa_n(g)}}{\cdot}{\cdot}$ for all $n\in \N$ and $g\in G$.

Let $\varepsilon \in (0,1)$. By assumption, there exists $\omega > 0$ and $N\in\N$ such that for all $n\geq N$ and $h,h' \in G_n$, we have $\delta_n(h,h') < \omega$ then $\KantorovichDist{\Lip_n}{\alpha_n^h}{\alpha_n^{h'}} < \frac{\varepsilon}{16}$.

  Fix $\mu \in \StateSpace(\A)$. By compactness, there exists a $\frac{\varepsilon}{32}$-dense finite subset $\mathcal{L}$ (for $\norm{\cdot}{\A}$) of $\{a\in\dom{\Lip}:\Lip(a)\leq 1,\mu(a)=0\}$. As $G$ is proper, the closed ball $G\left[\frac{2}{\varepsilon}\right]$ is compact, so there exists a finite $\frac{\omega}{2}$-dense subset $F$ of $G\left[\frac{2}{\varepsilon}\right]$ for $\delta$. We assume without loss of generality that the unit $e$ of $G$ is in $F$. Let:
\begin{equation*}
  \mathcal{F} = \left\{ \alpha^g(a) \middle\vert a \in \mathcal{L}, g \in F \right\} \text{.}
\end{equation*}
Note that $\mathcal{F}$ is finite by construction.

Since $\left[ 0, \frac{2}{\varepsilon} \right]$ is compact, and since $D$ is locally bounded, we conclude that there exists $M \geq 1$ such that $D(r) \leq M$ for all $r \in \left[ 0, \frac{2}{\varepsilon} \right]$.

There exists $N_1\in\N$ such that for all $n\geq N_1$,
\begin{equation*}
  \Haus{\norm{\cdot}{\A}}\left(\{\alpha^g(a)\},\targetsetforward{n,g}{a}{l, M} \right) < \frac{\varepsilon}{16}
\end{equation*}
for all $a\in\mathcal{L}$ and $g \in F$.

 There exists $N_2 \in\N$ such that for all $n\geq N_2$, we have $\tunnelextent{\tau_{n}} < \frac{\varepsilon}{64 M}$.

 Let $\eta = \min\left\{ \frac{\omega}{2},\frac{\varepsilon}{2} \right\} > 0$. There exists $N_3 \in \N$ such that for all $n\geq N_3$, we have that:
 \begin{equation*}
   (\varsigma_n,\varkappa_n) \in \UIso{\eta}{(G_n,\delta_n)}{(G,\delta)}{\frac{1}{\eta}}\text{.}
\end{equation*}

 Fix $n\geq \max\{ N, N_1, N_2, N_3 \}$.

Let $\varphi \in \StateSpace(\A)$. There exists $\psi \in \StateSpace(\A_n)$ such that $\Kantorovich{\Lip^n}(\varphi\circ\pi_n,\psi\circ\rho_n) < \frac{\varepsilon}{8 M}$. (We note that all the computations below are also valid if we start with $\psi \in \StateSpace(\A_n)$ and obtain $\varphi\in\StateSpace(\A)$ such that $\Kantorovich{\Lip^n}(\varphi\circ\pi_n,\psi\circ\rho_n) < \frac{\varepsilon}{8 M}$).

Let $d\in\dom{\Lip^n}$ with $\Lip^n(d) \leq 1$ and $\mu\circ\pi_n(d) = 0$. Let $g \in G\left[\frac{2}{\varepsilon}\right]$. Write $a = \pi_n(d)$ and $b  = \rho_n(d)$ --- note that $\mu(a)  = 0$. First, by definition of $\mathcal{L}$ and $F$, there exists $a' \in \mathcal{L}$ such that:
\begin{equation*}
  \norm{a-a'}{\A} <\frac{\varepsilon}{32}
\end{equation*}
and $h \in F$ such that $\delta(g,h) < \frac{\omega}{2}$, and therefore $\delta_n(\varkappa_n(g),\varkappa_n(h)) < \allowbreak \delta_n(g,h) + \frac{\omega}{2} < \omega$, which leads to:
\begin{equation*}
  \norm{\alpha^g(a) - \alpha^{h}(a)}{\A} < \frac{\varepsilon}{16}\text{ and }\norm{\alpha_n^{\varkappa_n(g)}(b) - \alpha_n^{\varkappa_n(h)}(b)}{\A_n} < \frac{\varepsilon}{16}
\end{equation*}

Let now $b' \in \targetsettunnel{\tau_n}{a'}{1}$. We compute:
\begin{equation*}
  \norm{b-b'}{\A_n} \leq \norm{a-a'}{\A} + 2 \tunnelextent{\tau_n} < \frac{\varepsilon}{16} \text{.}
\end{equation*}

Therefore:
\begin{align*}
  \left|\varphi(\alpha^g(a))-\varphi(\alpha^h(a'))\right|&\leq\left|\varphi(\alpha^g(a)-\alpha^g(a'))\right| + \left|\varphi(\alpha^g(a') - \alpha^h(a'))\right|\\
&\leq \norm{a-a'}{\A} + \KantorovichDist{\Lip}{\alpha^g}{\alpha^h} \\
&<\frac{\varepsilon}{8} \text{.}
\end{align*}
Similarly, we also have:
\begin{equation*}
  \left|\psi(\alpha_n^{\varkappa_n(g)}(b)) - \psi(\alpha_n^{\varkappa_n(h)}(b'))\right| < \frac{\varepsilon}{8}\text{,}
\end{equation*}
and therefore:
\begin{align*}
  \left|\varphi(\alpha^g(a)) - \psi(\alpha_n^{\varkappa_n(g)}(b))\right|  < \frac{\varepsilon}{4} + \left|\varphi(\alpha^h(a')) - \psi(\alpha_n^{\varkappa_n(h)}(b'))\right| \text{.}
\end{align*}

Note that $b'\in\targetsettunnel{\tau_n}{a}{1}$ by definition. Therefore, $c = \alpha_n^{\varkappa_n(h)}(b') \in \targetsetimage{n,h}{a'}{1,M}$. Let $x\in\targetsettunnel{\tau^{-1}_n}{c}{1,M}$. By construction, $x\in \targetsetforward{n,h}{a'}{1,M}$ and thus $\norm{\alpha^h(a') - x}{\A} < \frac{\varepsilon}{8}$. Moreover, by definition of target set, there exists $d\in\D$ such that $\pi_n(d) = x$ and $\rho_n(d) = c = \alpha_n^{\varkappa_n(h)}(b')$ and $\Lip^n(d) \leq M$.
\begin{align*}
  \left| \varphi(\alpha^h(a')) - \psi(\alpha_n^{\varkappa_n(h)}(b')) \right| 
  &\leq \left|\varphi(\alpha^h(a') - x)\right| + \left|\varphi(x) - \psi(\alpha_n^{\varkappa_n(h)}(b'))\right| \\
  &\leq  \norm{\alpha^h(a') - x}{\A}  + \left|\varphi\circ\pi_n(d) - \psi\circ\rho_n(d)\right|\\
  &\leq \frac{\varepsilon}{4} \text{.}
\end{align*}

Thus, for all $g \in G\left[\frac{2}{\varepsilon}\right]$, for all $\varphi\in\StateSpace(\A)$, and for all $d \in \dom{\Lip^n}$ with $\Lip^n(d) \leq 1$ and $\mu\circ\pi_n(d) =0$, we have proven:
\begin{equation*}
  \left|\varphi\circ\alpha^g\circ\pi_n(d) - \psi\circ\alpha_n^{\varkappa_n(g)}\circ\rho_n(d)\right| < \frac{\varepsilon}{2} \text{.}
\end{equation*}

Now, let $g \in G_n\left[\frac{1}{\varepsilon}\right]$ and $\varphi\in\StateSpace(\A)$. Again, there exists $\psi \in \StateSpace(\A_n)$ such that $\Kantorovich{\Lip^n}(\varphi\circ\pi_n,\psi\circ\rho_n) < \frac{\varepsilon}{8 M}$. By our previous work, since $\varkappa_n(\varsigma_n(g)) \in G\left[\frac{1}{\varepsilon}+\varepsilon\right]\subseteq G\left[\frac{2}{\varepsilon}\right]$ by Lemma (\ref{almost-isoiso-lemma}), we note that:
\begin{equation*}
\left|\varphi(\alpha^{\varsigma_n(g)}(d)) - \psi(\alpha_n^{\varkappa_n(\varsigma_n(g))}(d))\right| < \frac{\varepsilon}{2} \text{.}
\end{equation*}

Therefore, we conclude:
\begin{align*}
  \left|\varphi(\alpha^{\varsigma_n(g)}(d)) - \psi(\alpha_n^g(d))\right| &\leq \left|\varphi(\alpha^{\varsigma_n(g)}(d)) - \psi(\alpha_n^{\varkappa_n(\varsigma_n(g))}(d))\right| \\
&\quad + \norm{\alpha_n^{g}(a) - \alpha_n^{\varsigma_n\circ\varkappa_n(g)}(a)}{\A_n}\\
&\leq \varepsilon\text{.}
\end{align*}

Now, let $d\in \dom{\Lip^n}$ with $\Lip^n(d) \leq 1$. Then we note that $\mu\circ\pi_n(d - \mu\circ\pi_n(d)\unit_{\D_n}) = 0$ and $\Lip^n(d-\mu\circ\pi_n(d)\unit_{\D_n}) \leq 1$ as L-seminorms vanish on scalars, and thus we have just proven:
\begin{equation*}
  \left|\varphi(\alpha^{\varsigma_n(g)}(d)) - \psi(\alpha_n^g(d))\right| 
    = \left|\varphi(\alpha^{\varsigma_n(g)}(d - \mu\circ\pi_n(d)\unit_{\D_n})) - \psi(\alpha_n^g(d - \mu\circ\pi_n(d)\unit_{\D_n}))\right|
    \leq \varepsilon \text{.}
\end{equation*}

Our computation is again valid if we start with $\psi\in\StateSpace(\A_n)$ and choose $\varphi\in\StateSpace(\A)$ with $\Kantorovich{\Lip^n}(\varphi\circ\pi_n,\psi\circ\rho_n)<\frac{\varepsilon}{8 M}$.

  Therefore $\tunnelreach{\tau_{n}}{\varepsilon} < \varepsilon$. Since $\tunnelextent{\tau_n} < \frac{\varepsilon}{64 M} < \varepsilon$, we conclude that $\tunnelmagnitude{\tau_n}{\varepsilon} < \varepsilon$. Hence we have completed our proof.
\end{proof}

We thus can conclude on a sufficient condition for convergence of Cauchy sequences for the covariant propinquity:

\begin{corollary}
  Let $F$ be permissible and continuous and let $D : [0,\infty)\rightarrow [1,\infty)$ be a locally bounded function. Let $(\A_n,\Lip_n,G_n,\delta_n,\alpha_n)_{n\in\N}$ be a sequence of Lipschitz dynamical $F$-systems and $(\varepsilon_n)_{n\in\N}$ a sequence of positive real numbers such that for all $n\in\N$, there exists $\varepsilon_n > 0$ and $(\varsigma_n,\varkappa_n) \in \UIso{\varepsilon_n}{G_n}{G_{n+1}}{\frac{1}{\varepsilon_n}}$ and:
  \begin{enumerate}
  \item $\sum_{n=0}^\infty \varepsilon_n < \infty$,
  \item for all $n\in\N$ and $g \in G_n$:
    \begin{equation*}
      \left(\begin{cases}
          g_n = e_n \text{ if $n < N$,}\\
          g_n = g    \text{ if $n = N$,}\\
          g_n = \varsigma_n(g_{n-1}) \text{ if $n>N$.}
        \end{cases}\right)_{n\in\N} \in \mathcal{R}((G_n,\delta_n)_{n\in\N})\text{,}
    \end{equation*}
  \item $\forall n\in\N \quad \dpropinquity{}((\A_n,\Lip_n),(\A_{n+1},\Lip_{n+1})) < \varepsilon_n$,
    \item $\forall n \in \N \quad g \in G_n \quad \Lip_n\circ\alpha_n^{g} \leq D(\delta_n(e_n,g))\Lip_n$,
    \item for all $\varepsilon>0$, there exists $\omega > 0$ and $N\in\N$ such that if $n\geq N$ and if $g,h \in G_n$ with $\delta_n(g,h) < \omega$, then $\KantorovichDist{\Lip_n}{\alpha_n^g}{\alpha_n^h}{} < \varepsilon$,
  \end{enumerate}
then there exists a Lipschitz dynamical $F$-system $(\A,\Lip_\A,G,\delta,\alpha)$ such that:
\begin{equation*}
  \lim_{n\rightarrow\infty} \covpropinquity{}((\A_n,\Lip_n,G_n,\delta_n,\alpha_n),(\A,\Lip_\A,G,\delta,\alpha)) = 0 \text{.}
\end{equation*}
  Moreover, if for all $n\in\N$, the action $\alpha_n$ is by *-endomorphisms, then so is the action $\alpha$.
\end{corollary}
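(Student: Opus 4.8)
The plan is to read the five hypotheses as an assembly of the completeness statements already proved, and then to promote the subsequential conclusion of Theorem \ref{completeness-thm} to convergence of the entire sequence. First I would truncate the sequence so that $\sum_{n=0}^\infty\varepsilon_n<\frac{\sqrt 2}{2}$, which is harmless. Hypotheses (1) and (2) are exactly the hypotheses of Theorem \ref{Upsilon-completeness-thm} for the almost isometric isomorphisms $(\varsigma_n,\varkappa_n)$ --- the telescoping sequences appearing in (2) being precisely the sequences $\varpi_N$ of that theorem --- so $(G_n,\delta_n)_{n\in\N}$ converges for $\Upsilon$ to a proper monoid $(G,\delta)$.

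Next, hypothesis (3) together with (1) makes $(\A_n,\Lip_n)_{n\in\N}$ a Cauchy sequence for the dual propinquity $\dpropinquity{}$; since $F$ is permissible and continuous, $\dpropinquity{}$ is complete on the class of $F$-{\qcms s}, and the sequence converges for $\dpropinquity{}$ to some $F$-{\qcms} $(\A,\Lip_\A)$. Hypotheses (4) and (5) --- the dilation bound through $D$ and the uniform equicontinuity of the actions --- are the two remaining hypotheses of Theorem \ref{completeness-thm}, so its hypotheses (1)--(4) now all hold. Applying it yields a strictly increasing $j:\N\to\N$, a strongly continuous action $\alpha$ turning $\mathds{A}=(\A,\Lip_\A,G,\delta,\alpha)$ into a Lipschitz dynamical $F$-system, and the convergence $\covpropinquity{}(\mathds{A}_{j(n)},\mathds{A})\to 0$; the structural conclusions recalled in Theorems \ref{compactness-thm} and \ref{completeness-thm} also give the final assertion, namely that $\alpha$ is by *-endomorphisms whenever every $\alpha_n$ is.

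The remaining, and I expect hardest, step is to pass from this subsequence to the full sequence. The natural route is to prove that $(\mathds{A}_n)_{n\in\N}$ is itself Cauchy for $\covpropinquity{}$ and then invoke the elementary fact that a Cauchy sequence possessing a convergent subsequence converges to the same limit. To that end I would build a covariant tunnel $\theta_n$ from $\mathds{A}_n$ to $\mathds{A}_{n+1}$ by gluing $(\varsigma_n,\varkappa_n)$ to a tunnel $(\D,\Lip_\D,\pi,\rho)$ realizing $\dpropinquity{}(\A_n,\A_{n+1})<\varepsilon_n$; its extent $\tunnelextent{\theta_n}$ is then at most $\varepsilon_n$, hence summable by (1). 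The delicate quantity is the covariant reach $\tunnelreach{\theta_n}{\varepsilon_n}$, which forces one to bound $\Kantorovich{\Lip_\D}\left(\varphi\circ\alpha_n^g\circ\pi,\psi\circ\alpha_{n+1}^{\varsigma_n(g)}\circ\rho\right)$ uniformly for $g$ in the large ball $G_n\left[\frac{1}{\varepsilon_n}\right]$. A dual tunnel carries no intertwining information on its own, so this estimate cannot be read off from (3); it must be wrung out of the equicontinuity hypothesis (5) and the dilation control (4), in the same spirit as the reach is controlled inside the proof of Theorem \ref{completeness-thm} through the forward target sets $\targetsetforward{\tau,\varphi}{a}{l,D}$ and the bound $\norm{f-f'}{\A}\leq D\left(\norm{a-a'}{\A}+8 l \tunnelextent{\tau}\right)$. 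Securing this reach estimate, and with it the summability of $\covpropinquity{}(\mathds{A}_n,\mathds{A}_{n+1})$, is the main obstacle; once it is in hand the Cauchy property, and therefore convergence of the whole sequence to $\mathds{A}$, follows at once.
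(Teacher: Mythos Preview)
Your assembly of the completeness ingredients --- Theorem~\ref{Upsilon-completeness-thm} for $(G_n,\delta_n)$ via hypotheses (1)--(2), completeness of $\dpropinquity{}$ for $(\A_n,\Lip_n)$ via (1) and (3), and then Theorem~\ref{completeness-thm} via (4)--(5) to produce the subsequential limit $(\A,\Lip_\A,G,\delta,\alpha)$ together with the *-endomorphism clause --- is exactly the paper's argument. (The paper cites Theorem~\ref{compactness-thm} at that step, but the intended reference is Theorem~\ref{completeness-thm}, which is what actually delivers $\covpropinquity{}$-convergence along a subsequence.)

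The divergence is in the final step. You propose to upgrade subsequential convergence to full convergence by first proving $(\mathds A_n)_{n\in\N}$ is Cauchy for $\covpropinquity{}$, building covariant tunnels between consecutive terms by gluing $(\varsigma_n,\varkappa_n)$ to a $\dpropinquity{}$-tunnel, and you rightly flag the reach of such a tunnel as the real obstacle. The paper does not take this route: after obtaining the subsequential limit it simply writes ``Since a Cauchy sequence with a convergent subsequence converges, our corollary is now proven,'' treating the Cauchy property for $\covpropinquity{}$ as already in hand and offering no further argument. In particular, the paper never estimates the reach of a covariant tunnel between $\mathds A_n$ and $\mathds A_{n+1}$; that difficulty --- which you correctly anticipate would be hard to overcome directly, since an arbitrary $\dpropinquity{}$-tunnel carries no intertwining data for the actions --- is simply not confronted. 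Your plan is therefore the paper's plan plus an attempted justification of a step the paper takes for granted; whether that one-liner is fully warranted by the stated hypotheses is a fair question, but the consecutive-tunnel route you sketch is not the mechanism the paper relies on.
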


\begin{proof}
  The sequence $(\A_n,\Lip_n)_{n\in\N}$ is a Cauchy sequence for the dual $F$-propinquity, which is complete by \cite{Latremoliere13b,Latremoliere15} since $F$ is continuous, so there exists a {\qcms} $(\A,\Lip_\A)$ such that $\lim_{n\rightarrow\infty} \dpropinquity{}((\A_n,\Lip_n),(\A,\Lip_\A)) = 0$.
Moreover by Theorem (\ref{Upsilon-completeness-thm}), there exists a proper monoid $(G,\delta)$ such that:
\begin{equation*}
  \lim_{n\rightarrow\infty} \Upsilon((G_n,\delta_n),(G,\delta)) = 0\text{.}
\end{equation*}
By Theorem (\ref{compactness-thm}), which we now may apply, there exists a strictly increasing function $j : \N \rightarrow \N$ such that:
\begin{equation*}
  (\A_{j(n)},\Lip_{j(n)},G_{j(n)},\delta_{j(n)},\alpha_{j(n)})_{n\in\N}
\end{equation*}
converges to a Lipschitz dynamical $F$-system $(\A,\Lip_\A,G,\delta,\alpha)$.

Since a Cauchy sequence with a convergent subsequence converges, our corollary is now proven.
\end{proof}

A particular consequence of our work is a simpler result concerning Lipschitz dynamical systems with bi-invariant metrics, since regularity is no longer an hypothesis.

\begin{corollary}
  Let $(\A_n,\Lip_n,G_n,\delta_n,\alpha_n)_{n\in\N}$ be a sequence of Lipschitz dynamical systems with $\delta_n$ bi-invariant for all $n\in\N$. If:
  \begin{enumerate}
    \item $(\A_n,\Lip_n,G_n,\delta_n,\alpha_n)_{n\in\N}$ is Cauchy for $\covpropinquity{}$,
    \item there exists a locally bounded function $D: [0,\infty)\rightarrow[1,\infty)$ such that:
      \begin{equation*}
        \forall n\in\N, g \in G_n \quad \Lip_n\circ\alpha_n^g \leq D(\delta_n(e_n,g)) \Lip_n
      \end{equation*}
      where $e_n \in G_n$ is the identity element of $G_n$ for all $n\in\N$,
    \item for all $\varepsilon>0$, there exists $\omega > 0$ and $N\in\N$ such that if $n\geq N$ and if $g,h \in G_n$ with $\delta_n(g,h) < \omega$, then $\KantorovichDist{\Lip_n}{\alpha_n^g}{\alpha_n^h}{} < \varepsilon$,
  \end{enumerate}
then there exists a Lipschitz dynamical system $(\A,\Lip_\A,G,\delta,\alpha)$ such that:
\begin{equation*}
  \lim_{n\rightarrow\infty} \covpropinquity{}((\A_n,\Lip_n,G_n,\delta_n,\alpha_n),(\A,\Lip_\A,G,\delta,\alpha)) = 0 \text{.}
\end{equation*}
  Moreover, if for all $n\in\N$, the action $\alpha_n$ is by *-endomorphisms, then so is the action $\alpha$.
\end{corollary}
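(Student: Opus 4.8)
The plan is to deduce this corollary from Theorem \ref{completeness-thm}, the genuine content being that bi-invariance renders the regularity hypotheses automatic. First I would record that the covariant propinquity dominates both $\Upsilon$ on the underlying proper monoids and the dual propinquity $\dpropinquity{}$ on the underlying {\qcms s}. Indeed, any $\varepsilon$-covariant tunnel realizing $\covpropinquity{}(\mathds{A}_n,\mathds{A}_m) < \varepsilon$ carries, by definition, an element of $\UIso{\varepsilon}{(G_n,\delta_n)}{(G_m,\delta_m)}{\frac{1}{\varepsilon}}$, so $\Upsilon((G_n,\delta_n),(G_m,\delta_m)) \leq \varepsilon$ by Definition \ref{group-GH-def}; and its underlying tunnel of {\qcms s} has extent at most $\varepsilon$, so $\dpropinquity{}((\A_n,\Lip_n),(\A_m,\Lip_m)) \leq \varepsilon$. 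Consequently, from the hypothesis that $(\A_n,\Lip_n,G_n,\delta_n,\alpha_n)_{n\in\N}$ is Cauchy for $\covpropinquity{}$, I obtain that $(G_n,\delta_n)_{n\in\N}$ is Cauchy for $\Upsilon$ and $(\A_n,\Lip_n)_{n\in\N}$ is Cauchy for $\dpropinquity{}$.

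Next I would produce the two limits. The dual propinquity is complete, so there exists a {\qcms} $(\A,\Lip_\A)$ with $\lim_{n\to\infty}\dpropinquity{}((\A_n,\Lip_n),(\A,\Lip_\A)) = 0$. For the monoids, the decisive point is the earlier corollary asserting that $\Upsilon$ is complete on proper monoids equipped with a bi-invariant metric: since each $\delta_n$ is bi-invariant, every right translation on $G_n$ is an isometry, whence $\mathcal{R}((G_n,\delta_n)_{n\in\N}) = \prod_{n\in\N} G_n$ and condition (2) of Theorem \ref{Upsilon-completeness-thm} holds vacuously for every bounded sequence. Applying that corollary to the Cauchy sequence $(G_n,\delta_n)_{n\in\N}$ yields a proper monoid $(G,\delta)$ with $\lim_{n\to\infty}\Upsilon((G_n,\delta_n),(G,\delta)) = 0$. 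Because both underlying metrics are complete, these convergences hold along the \emph{full} index sequence, not merely a subsequence.

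With both limits secured along the full sequence, I would simply check that the four hypotheses of Theorem \ref{completeness-thm} are now in place: its dilation control (1) and its equicontinuity condition (4) are verbatim hypotheses (2) and (3) of the present corollary, while the $\Upsilon$- and $\dpropinquity{}$-convergences demanded in (2) and (3) were just established. Theorem \ref{completeness-thm} then furnishes a strictly increasing $j:\N\to\N$ and a Lipschitz dynamical system $(\A,\Lip_\A,G,\delta,\alpha)$ to which the subsequence indexed by $j$ converges for $\covpropinquity{}$. Since a Cauchy sequence possessing a convergent subsequence converges to the same limit, the entire sequence $(\A_n,\Lip_n,G_n,\delta_n,\alpha_n)_{n\in\N}$ converges to $(\A,\Lip_\A,G,\delta,\alpha)$ for $\covpropinquity{}$. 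The preservation of the $*$-endomorphism property is inherited directly from the corresponding conclusion of Theorem \ref{compactness-thm}, upon which Theorem \ref{completeness-thm} is built.

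The only step carrying real content, rather than bookkeeping, is the monoid-completeness input: one must observe that bi-invariance forces every bounded sequence to be regular, which is exactly what collapses condition (2) of Theorem \ref{Upsilon-completeness-thm} and eliminates the equicontinuity-of-right-translations obstruction flagged at the start of Section 3. Once that observation is made, the proof is a direct assembly of results already established.
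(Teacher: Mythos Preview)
Your proposal is correct and follows essentially the same approach the paper intends: the corollary is stated without proof precisely because it is the specialization of the preceding corollary (and ultimately Theorem~\ref{completeness-thm}) to the bi-invariant case, where the earlier corollary on completeness of $\Upsilon$ for bi-invariant proper monoids supplies the monoid limit and eliminates the regularity hypothesis. Your identification of the single substantive step---that bi-invariance forces $\mathcal{R}((G_n,\delta_n)_{n\in\N}) = \prod_{n\in\N} G_n$---is exactly right, and the remaining assembly (domination of $\Upsilon$ and $\dpropinquity{}$ by $\covpropinquity{}$, completeness of $\dpropinquity{}$, application of Theorem~\ref{completeness-thm}, Cauchy plus convergent subsequence) mirrors the proof of the previous corollary.
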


We also record the implications of our work on Lipschitz C*-dynamical systems, under the strong assumption of Corollary (\ref{Upsilon-completeness-cor}).

\begin{corollary}
    Let $F$ be permissible and continuous. If $(\A_n,\Lip_n,G_n,\delta_n,\alpha_n)_{n\in\N}$ is a Cauchy sequence of Lipschitz C*-dynamical $F$-systems such that:
\begin{itemize}
  \item for all $\varepsilon > 0$ there exists $\omega > 0$ and $N\in \N$ such that for all $n\geq N$, if $g,h \in G_n$ with $\delta_n(g,h) < \omega$ then $\delta_n\left(g^{-1},h^{-1}\right) < \varepsilon$,
  \item for all $\varepsilon>0$, there exists $\omega > 0$ and $N\in\N$ such that if $n\geq N$ and if $g,h \in G_n$ with $\delta_n(g,h) < \omega$, then $\KantorovichDist{\Lip_n}{\alpha_n^g}{\alpha_n^h}{} < \varepsilon$,
  \item there exists a locally bounded function $D : [0,\infty)\rightarrow [1,\infty)$ such that for all $n\in\N$, $g\in G_n$, and with $e_n \in G_n$ the identity of $G_n$, we have $\Lip_n\circ\alpha_n^g\leq D(\delta_n(e_n,g))\Lip_n$,
\end{itemize}
then there exists a Lipschitz C*-dynamical $F$-system $(\A,\Lip,G,\delta,\alpha)$ such that:
\begin{equation*}
  \lim_{n\rightarrow\infty} \covpropinquity{F}((\A_n,\Lip_n,G_n,\delta_n,\alpha_n),(\A,\Lip,G,\delta,\alpha)) = 0 \text{.}
\end{equation*}
\end{corollary}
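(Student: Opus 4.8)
The plan is to reduce this statement to the completeness results already proved for the two ``coordinate'' distances together with Theorem~(\ref{completeness-thm}). The key preliminary observation is that the covariant $F$-propinquity dominates both the distance $\Upsilon$ between the underlying proper groups and the dual $F$-propinquity between the underlying {\qcms s}. Indeed, if $\covpropinquity{F}(\mathds{A},\mathds{B}) < \eta$ then there is an $\varepsilon < \eta$ and an $\varepsilon$-covariant $F$-tunnel $\tau$ with $\tunnelmagnitude{\tau}{\varepsilon}\leq\varepsilon$; by Definition~(\ref{equi-tunnel-def}) this $\tau$ carries an element of $\UIso{\varepsilon}{(G_1,\delta_1)}{(G_2,\delta_2)}{\frac{1}{\varepsilon}}$, so $\Upsilon((G_1,\delta_1),(G_2,\delta_2))\leq\varepsilon$, while $\tunnelextent{\tau}\leq\tunnelmagnitude{\tau}{\varepsilon}\leq\varepsilon$ bounds the dual $F$-propinquity of the underlying {\qcms s}. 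Applying this to consecutive terms of the given Cauchy sequence, I obtain that $(\A_n,\Lip_n)_{n\in\N}$ is Cauchy for $\dpropinquity{F}$ and that $(G_n,\delta_n)_{n\in\N}$ is Cauchy for $\Upsilon$.

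First I would build the two limit objects. Since $F$ is continuous, the dual $F$-propinquity is complete by \cite{Latremoliere13b,Latremoliere15}, so there is an $F$-{\qcms} $(\A,\Lip)$ with $\dpropinquity{}((\A_n,\Lip_n),(\A,\Lip))\to 0$. The first hypothesis of the corollary is exactly the uniform continuity of the inverse required by Corollary~(\ref{Upsilon-completeness-cor}); since each $G_n$ is a group, that corollary applies to the Cauchy sequence $(G_n,\delta_n)_{n\in\N}$ and yields a proper \emph{group} $(G,\delta)$ with $\Upsilon((G_n,\delta_n),(G,\delta))\to 0$. Invoking Corollary~(\ref{Upsilon-completeness-cor}) rather than Theorem~(\ref{Upsilon-completeness-thm}) is essential: it is what guarantees the limit is a group, and it also forces \emph{every} bounded sequence in $\prod_{n\in\N}G_n$ to be regular, so the regularity hypothesis of Theorem~(\ref{Upsilon-completeness-thm}) needs no separate verification.

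Next I would verify the four hypotheses of Theorem~(\ref{completeness-thm}) and invoke it. Hypothesis~(1) is the third bullet, since $\dil{\alpha_n^g}\leq D(\delta_n(e_n,g))$ is the same as $\Lip_n\circ\alpha_n^g\leq D(\delta_n(e_n,g))\Lip_n$; hypothesis~(2) is the convergence of $\Upsilon$ just established; hypothesis~(3) is the convergence of $\dpropinquity{}$ just established; and hypothesis~(4) is the second bullet. Theorem~(\ref{completeness-thm}) then furnishes a strictly increasing $j:\N\to\N$ and a strongly continuous action $\alpha$ of $(G,\delta)$ on $\A$ making $(\A,\Lip,G,\delta,\alpha)$ a Lipschitz dynamical $F$-system with
\begin{equation*}
  \covpropinquity{F}\bigl((\A_{j(n)},\Lip_{j(n)},G_{j(n)},\delta_{j(n)},\alpha_{j(n)}),(\A,\Lip,G,\delta,\alpha)\bigr)\xrightarrow{n\rightarrow\infty}0\text{.}
\end{equation*}

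Finally I would upgrade the limit to a Lipschitz C*-dynamical system and pass from the subsequence to the full sequence. Because each $\alpha_n$ acts by Lipschitz *-automorphisms, conclusion~(2) of Theorem~(\ref{compactness-thm}) shows $\alpha$ acts by Lipschitz *-morphisms; since $(G,\delta)$ is a group and $\alpha^{e}=\mathrm{id}$, the relation $\alpha^g\circ\alpha^{g^{-1}}=\alpha^e$ forces each $\alpha^g$ to be an invertible Lipschitz *-automorphism, so $(\A,\Lip,G,\delta,\alpha)$ is indeed a Lipschitz C*-dynamical $F$-system. As a Cauchy sequence possessing a convergent subsequence, the whole sequence then converges to $(\A,\Lip,G,\delta,\alpha)$ for $\covpropinquity{F}$, which finishes the argument. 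The step I expect to be the genuine obstacle is the first one, namely ensuring that the limiting \emph{group} structure survives the passage to the limit; this is precisely why the inverse-uniform-continuity hypothesis, routed through Corollary~(\ref{Upsilon-completeness-cor}), cannot be dispensed with, while the remaining steps amount to matching hypotheses to Theorems~(\ref{completeness-thm}) and~(\ref{compactness-thm}).
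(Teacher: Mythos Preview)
Your proposal is correct and follows essentially the same approach as the paper, which simply states that the result follows from Theorem~(\ref{completeness-thm}) and Corollary~(\ref{Upsilon-completeness-cor}); you have filled in the details of how these two results combine, including the domination of $\Upsilon$ and $\dpropinquity{F}$ by $\covpropinquity{F}$ and the Cauchy-plus-convergent-subsequence argument. One minor redundancy: conclusion~(2) of Theorem~(\ref{compactness-thm}) already gives that $\alpha$ acts by Lipschitz \emph{automorphisms}, so your additional argument via $\alpha^g\circ\alpha^{g^{-1}}=\alpha^e$ is unnecessary (though not incorrect).
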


\begin{proof}
  This follows from Theorem (\ref{completeness-thm}) and Corollary (\ref{Upsilon-completeness-cor}).
\end{proof}

We conclude this section with two observations. First, there are many natural complete classes of Lipschitz dynamical systems for the covariant propinquity. Let $F$ be a continuous admissible function, $D:[0,\infty)\rightarrow [1,\infty)$ be a locally bounded function, and $K : [0,\infty) \rightarrow [0,\infty]$ be a function with $K(0) = 0$ and $K$ continuous at $0$. Let us denote by $\mathcal{C}$ the class of all Lipschitz dynamical $F$-systems $(\A,\Lip,G,\delta,\alpha)$ such that:
\begin{itemize}
\item $(G,\delta)$ is a proper monoid with $\delta$ bi-invariant,
\item for all $g,h \in G$, we have $\KantorovichDist{\Lip}{\alpha^g}{\alpha^h} \leq K(\delta(g,h))$.
\item for all $g \in G$, $\Lip\circ\alpha^g \leq D(\delta(e,g))\Lip$ (where $e\in G$ is the identity element of $G$).
\end{itemize}
Let $\mathcal{C}^\ast$ be the subclass of $\mathcal{C}$ consisting of Lipschitz C*-dynamical systems. Then Theorem (\ref{completeness-thm}) and its corollaries prove that both $\mathcal{C}$ and $\mathcal{C}^\ast$ are complete for $\covpropinquity{F}$. These are but certain possible complete classes: for instance, we could relax the hypothesis of working with bi-invariant metrics by asking that right translations in our monoids are Lipschitz with some uniform bound on the Lipschitz constant.

Second, there is a natural way to metrize compact monoids and groups acting on a {\qcms}. We discuss this point in the case of compact groups. Let us be given a compact group $G$ and a strongly continuous action $\alpha$ of $G$ by Lipschitz automorphisms of a {\qcms} $(\A,\Lip)$. Then we can define:
\begin{equation*}
  \delta : g,h \in G \mapsto \KantorovichDist{\Lip}{\alpha^g}{\alpha^h} \text{.}
\end{equation*}
In general, $\delta$ may only be a pseudo-metric, though it is induced by a pseudo-length function $\ell : g \in G\mapsto \KantorovichDist{\Lip}{\alpha^g}{\alpha^0}$ (where $\alpha^0$ is the identity automorphism of $\A$). We note that $\ell$ is a continuous function since $\KantorovichDist{\Lip}{\cdot}{\cdot}$ metrizes the topology of pointwise convergence in norm on the group of automorphisms of $\A$ by \cite{Latremoliere16b} and since $\alpha$ is strongly continuous.

Now, suppose $\ell(g) = 0$ for some $g\in G$, then for any $h\in G$ we note that:
\begin{equation*}
  \alpha^h \circ \alpha^g \circ \alpha^{h^{-1}} = \alpha^h\circ\alpha^0\circ\alpha^{h^{-1}} = \alpha^{0} 
\end{equation*}
so $K = \left\{ g \in G : \ell(g) = 0\right\}$ is closed by all inner automorphisms of $G$. It is then easy to check that $K$ is a subgroup, hence a normal subgroup, and it is closed by continuity of $\ell$. As a consequence, we can factor $\alpha$ through this subgroup to a free action of $H = \bigslant{G}{K}$ on $(\A,\Lip)$ by Lipschitz automorphisms. Note that $\ell$ induces a continuous length function on $H$ and $H$ is compact, so this length function induces the topology of $H$. We thus we obtain a class of systems whose right translations are Lipschitz, and the metric information is entirely from the dynamics and not the group, thus tying together the two metric structures which appear in our work --- the group metric and the metric from $\KantorovichDist{\cdot}{\cdot}{\cdot}$. This method does not directly apply to non-compact groups, since $\KantorovichDist{\cdot}{\cdot}{\cdot}$ is always bounded on the automorphism group \cite{Latremoliere16b} and a bounded proper metric space must be compact.


\bibliographystyle{amsplain}
\bibliography{../thesis}
\vfill

\end{document}